\theoremstyle{plain}
\newtheorem{thm}{Theorem}[section]  
\newtheorem*{thm*}{Theorem}     
\newtheorem*{cor*}{Corollary}    
\newtheorem{cor}[thm]{Corollary}   
\newtheorem{lem}[thm]{Lemma}     
\newtheorem{prop}[thm]{Proposition} 
\theoremstyle{definition}
\newtheorem{ex}[thm]{Example}
\newtheorem{rem}[thm]{Remark}
\newtheorem{question}[thm]{Question}
\newcommand{\Z}{\mathbb{Z}}
\newcommand{\N}{\mathbb{N}}
\newcommand{\C}{\mathbb{C}}
\newcommand{\cb}{\mathscr B}
\title{Dynamical properties of some adic systems with arbitrary orderings}
\author{Sarah Frick}
\address{Department of Mathematics, Furman University, Greenville, SC 29613 USA}
\email{sarah.frick@furman.edu}
\author{Karl Petersen}
\address{Department of Mathematics,
CB 3250 Phillips Hall,
University of North Carolina,
Chapel Hill, NC 27599 USA}
\email{petersen@math.unc.edu}
\author{Sandi Shields}
\address{College of Charleston, 66 George St., Charleston, SC 29424-0001 USA}
\email{shieldss@cofc.edu}
\date{\today}
\begin{document}

\maketitle

\begin{abstract}
We consider arbitrary orderings of the edges entering each vertex of the (downward directed) Pascal graph. Each ordering determines an adic (Bratteli-Vershik) system, with a transformation that is defined on most of the space of infinite paths that begin at the root. We prove that for every ordering the coding of orbits according to the partition of the path space determined by the first three edges is essentially faithful, meaning that it is one-to-one on a set of paths that has full measure for every fully supported invariant probability measure. We also show that for every $k$ the subshift that arises from coding orbits according to the first $k$ edges is topologically weakly mixing. We give a necessary and sufficient condition for any adic system to be topologically conjugate to an odometer and use this condition to determine the probability that a random order on a fixed diagram, or a diagram constructed at random in some way, is topologically conjugate to an odometer. We also show that the closure of the union over all orderings of the subshifts arising from codings of the Pascal adic by the first edge has superpolynomial complexity, is not topologically transitive, and has no periodic points besides the two fixed points, {while the intersection over all orderings consists of just four orbits}.
\end{abstract}

\section{Introduction}
Adic, or Bratteli-Vershik (BV), systems represent all measure-preserving systems on nonatomic probability spaces and all minimal systems on the Cantor set. They provide a convenient combinatorial description of the construction of systems by the cutting and stacking method, present tail fields of stochastic processes as the fields of adic-invariant sets, facilitate the study of measure-preserving and topological orbit equivalence, and present important and interesting particular systems for detailed study. We refer to the introductions of the papers in our references list and the survey \cite{Durand2010} for background and further details.

For adic systems, invariant measures and orbit equivalence depend only on the graph-theoretic structure of the underlying diagram and are independent of the orders assigned to the edges entering each vertex, which determine the adic transformation; further dynamical properties, such as the spectrum, isomorphism, or joinings, do depend on the particular ordering.
Some recent work has investigated how the orderings assigned to the edges can affect the sets of maximal and minimal paths. Bezuglyi, Kwiatkowski, and Yassawi \cite{BezuglyiKwiatkowskiYassawi2014} studied arbitrary reorderings of adic systems to determine which ones---the ``perfect" orderings---allow the adic transformation to be defined as a homeomorphism. They showed that when orderings of a diagram of finite rank $r$ are chosen uniformly at random, there is a $J \in \{1,\dots ,r\}$ such that almost every ordering has $J$ maximal and $J$ minimal paths. Medynets \cite{Medynets2006} gave an example of a diagram for which no ordering is perfect. Bezuglyi and Yassawi \cite{BezuglyiYassawi2013} pursue this question beyond the finite rank case and give necessary and sufficient conditions for a diagram to admit a perfect ordering. Janssen, Quas, and Yassawi \cite{JanssenYassawiQuas2014} produce a class of systems for which $J =\infty$ and almost every ordering is imperfect.

It is important to determine when coding the orbits of paths according to the natural partition into cylinder sets determined by the first $k$ edges produces an essentially faithful representation of the system as a subshift on a finite alphabet; for then traditional methods of symbolic dynamics can be applied, and, even if the adic transformation cannot be defined as a homeomorphism, nevertheless the corresponding subshift can be studied as a legitimate dynamical system. By ``essential" we mean that the coding map is one-to-one except on a set of paths that has measure zero for each fully supported adic-invariant probability measure on the system. This property was proved for the Pascal adic with a particular edge ordering (namely one that is lexicographical from left to right at each vertex) in \cite{MelaPetersen2005} and for a wider class of systems in \cite{Frick2009}. On the other hand, Downarowicz and Maass \cite{DownarowiczMaass2008} (see also \cite{Hoynes2014}) showed that each minimal finite topological rank adic system, namely one that can be represented up to topological conjugacy by an adic system with a bounded number of vertices on each level, with unique maximal and minimal paths, and having all orbits dense, either has a faithful coding by the first $k$ edges for some $k$ or else is topologically conjugate to an odometer. This result may be regarded as a generalization of previous work (see \cites{Livshitz1988, Forrest1997, DurandHostSkau1999, Host2000}) on the relations among stationary adic systems, substitutions, and odometers. It was extended to aperiodic Cantor minimal systems in \cite{BezuglyiKwiatkowskiMedynets2009}.

 So far we have been lacking useful tests for when a general adic system allows an essentially faithful representation as a subshift or when it is topologically conjugate to an odometer. Here we show (Theorem \ref{thm:coding}) that for the Pascal system every edge ordering allows essentially faithful coding by the first three edges.
 Further, extending the ideas in \cite{Mela2002, MelaPetersen2005}, we show that for every ordering the subshift defined by the coding is topologically weakly mixing (Theorem \ref{thm:weakmixing}). We find this somewhat surprising, since the existence of eigenfunctions is highly sensitive to timing.
 We also establish a necessary and sufficient condition for an adic system of a fairly general kind to be topologically conjugate to an odometer (Theorem \ref{UOimpliesOdometer}). (Previously, sufficient conditions were known for systems with a bounded number of vertices per level \cite{DurandHostSkau1999, BezuglyiKwiatkowskiYassawi2014}. A similar condition has been given independently by Durand and Yassawi \cite{DurandYassawi2014}, and a necessary condition was mentioned in \cite{GjerdeJohansen2000}.) We use this condition to determine the probability that a random order on a fixed diagram, or an ordered diagram constructed at random in a certain sense, is topologically conjugate to an odometer (Examples \ref{ex:odometerprob} and \ref{ex:odometer2}).
We also study the subshift that is the closure of the union of the subshifts determined by the Pascal adic with all possible orderings: we estimate its complexity, prove that it is not topologically transitive, and show that its only periodic points are the two fixed points.
{On the other hand, the intersection over all orderings consists of just four orbits.}
 The final section presents examples and further observations related to the questions under discussion.

\section{The Pascal system}

In this section we fix notation and terminology for the Pascal adic system. Similar concepts apply also to other adic systems.

Let $P$ denote the Pascal graph, which is a directed graph with countably many vertices arranged into levels. The set of vertices is $\mathscr V = \{(x,y): x, y \in \mathbb Z_{\geq 0}\}$. The set $\mathscr E$ of directed edges consists of edges from $(x,y)$ to $(x+1,y)$ and $(x,y+1)$ for each $(x,y) \in \mathscr V$. The {\em level} of a vertex $(x,y)$ is $n=x+y$. For each $n \geq 0$ let
\begin{equation}
\mathscr V_n = \{(x,y): x + y = n\}
\end{equation}
denote the set of vertices at level $n$, and, for $n \geq 1$, $\mathscr E_n$ denote the set of edges with source in $\mathscr V_{n-1}$ and range in $\mathscr V_n$.

We specify an {\em ordering} of the incoming edges to all vertices other than $(0,0)$ by a function $\xi: \mathscr V \to \{0,1\}$: the edge from $(x-1,y)$ to $(x,y)$ is defined to be less than the edge from $(x,y-1)$ to $(x,y)$ if and only if $\xi (x,y)=1$. As usual with adic systems, each such ordering determines a partial order on the set $X$ of infinite directed paths on $P$, viewed as sequences of adjacent edges, that begin at the {\em root}, $(0,0)$. Paths $(\gamma_k)$ and $(\eta_k)$ are {\em comparable} if they eventually coincide: there is an $N$ such that $\gamma_k=\eta_k$ for all $k>N$.
If $n$ is the smallest such $N$, then $\gamma<\eta$ if and only if $\gamma_n < \eta_n$. Then the adic transformation $T_\xi$ is defined on the complement of the set $X_{\xi,\max}$ of paths that consist entirely of maximal edges by letting $T_\xi \gamma$ equal the smallest path $\eta$ that is larger than $\gamma$.
Similarly, $T_\xi^{-1}$ is defined on the complement of the set $X_{\xi,\min}$ of paths that consist entirely of minimal edges. Edges connecting ``boundary vertices'', i.e. from $(x,0)$ to $(x+1,0)$ and from $(0,y)$ to $(0,y+1)$, are both maximal and minimal. Define $\gamma_{-\infty}$ to be the path down the left boundary of the diagram and $\gamma_\infty$ to be the path down the right boundary.
The ordinary Pascal adic system is determined by the ordering $\xi \equiv 0$. When we are working with a fixed ordering, the subscript $\xi$ can be omitted. Denote by $X'_\xi$ the orbit under $T_\xi$ of $X_{\xi,\min} \cup X_{\xi,\max}$.
Sometimes $T_\xi$ can be extended to a map from (part of) $X_{\xi,\max}$ to $X_{\xi,\min}$.

For each $k \geq 1$ we consider the partition $\mathscr P_k$ of $X$
into the cylinder sets
\begin{equation}
 \text{Cyl}_\xi(a_1,\dots,a_k) = \{\gamma \in X: \gamma_i=a_i, i=1, \dots, k\}
\end{equation}
defined by specifying the first $k$ edges $a_i$, $i=1,\dots,k$.

These are clopen sets that generate the topology of $X$. For each $k$ there are $2^k$ cylinder sets of level $k$.
The elements of $\mathscr P_k$ may be regarded as symbols
and identified with the sets $ \text{Cyl}_\xi(a_1,\dots,a_k)$, the strings $a_1 \dots a_k$, or the corresponding initial paths of length $k$.

The set $\mathscr P_k^{\mathbb Z}$ of two-sided infinite sequences with entries from $\mathscr P_k$ is a compact metric space with distance $d(\omega, \omega') = 1/(2^{\min\{|m|:\omega_m \neq (\omega')_m\}}$).

For a fixed ordering $\xi$, each path $\gamma \in X$ has a {$k$-coding} $\omega_k(\gamma) \in \mathscr P_k^{\mathbb Z}$ given by $\omega_k(\gamma)_n = C$ if and only if $T_\xi^n \gamma \in C \in \mathscr P_k$.
We write $\omega \in \mathscr P_k^\Z$ as $\omega= \dots C_{-2}C_{-1}.C_0C_1 \dots .$
The subshift $(\Sigma_k, \sigma)$ consists of the set $\Sigma_k \subset \mathscr P_k^{\mathbb Z}$ of all sequences each of whose finite subblocks can be found in $\omega_k(\gamma)$ for some $\gamma \in X$, together with the shift transformation $\sigma$ defined by $(\sigma \omega)_n = \omega_{n+1}$. There are natural factor maps from $(\Sigma_k,\sigma)$ to $(\Sigma_l,\sigma)$ for each $l<k$.

 We say the $k$-coding is {\em faithful} if $\omega_k(\gamma)\neq{\omega_k(\gamma')}$ whenever $\gamma\neq{\gamma'}$. The system is expansive if and only if there is a $k$ such that the $k$-coding is faithful.

The fully supported nonatomic adic-invariant ergodic probability measures on the path space $X$ are a one-parameter family $\{ \mu_\alpha : 0 < \alpha < 1\}$. Each edge from $(x,y-1)$ to $(x,y)$ is assigned weight $\alpha$, and each edge from $(x-1,y)$ to $(x,y)$ is assigned weight $1-\alpha$. The measure of a cylinder set $ \text{Cyl}_\xi(a_1, \dots, a_k)$ as above is then the product of the weights on the edges that define the cylinder set. For a fixed vertex $v$, all cylinder sets determined by paths from $(0,0)$ to $v$ then have the same measure (see \cite{MelaPetersen2005}). Changing the ordering of the edges just changes the order in which paths, or cylinder sets, are mapped to one another and does not change the set of fully supported adic-invariant ergodic probability measures.

\begin{prop}\label{prop:maxpaths}
For each ordering $\xi$ of the Pascal graph and each fully supported adic-invariant probability measure $\mu_\alpha$, the set $X'_\xi=\mathscr O(X_{\xi,\min} \cup X_{\xi,\max})$ has measure 0.
\end{prop}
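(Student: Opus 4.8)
The plan is to reduce the statement to a measure estimate on the sets $X_{\xi,\max}$ and $X_{\xi,\min}$ themselves, and then to exploit the rigidity of maximal (respectively minimal) paths to cover each of these sets by very few cylinders at each level. Since $X'_\xi=\mathscr O(X_{\xi,\min}\cup X_{\xi,\max})=\bigcup_{n\in\Z}T_\xi^n(X_{\xi,\min}\cup X_{\xi,\max})$ is a countable union, and $T_\xi$ is a bijection (with measurable inverse) off the set $X_{\xi,\min}\cup X_{\xi,\max}$ that preserves $\mu_\alpha$, it suffices to prove that $X_{\xi,\min}\cup X_{\xi,\max}$ is $\mu_\alpha$-null: then $T_\xi$ and $T_\xi^{-1}$ both send this null set to null sets, so every iterate $T_\xi^n(X_{\xi,\min}\cup X_{\xi,\max})$ is null and the countable union is null. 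By symmetry it is enough to show $\mu_\alpha(X_{\xi,\max})=0$, the argument for $X_{\xi,\min}$ being identical with ``minimal'' in place of ``maximal.''

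The key observation is a rigidity property. At each interior vertex $(x,y)$ with $x,y\ge 1$ there are exactly two incoming edges and, by the defining ordering $\xi(x,y)$, exactly one of them is maximal; at a boundary vertex the unique incoming edge is both maximal and minimal. Hence every vertex $v$ has a \emph{unique} maximal incoming edge, and following maximal edges backward from $v$ yields a unique initial path $\rho_n(v)$ from the root $(0,0)$ to $v$ all of whose edges are maximal. Consequently, any $\gamma\in X_{\xi,\max}$, being maximal at every level, has initial segment to level $n$ equal to $\rho_n(v)$, where $v\in\mathscr V_n$ is the level-$n$ vertex of $\gamma$. Therefore
\[
X_{\xi,\max}\ \subseteq\ \bigcup_{v\in\mathscr V_n}\mathrm{Cyl}_\xi\bigl(\rho_n(v)\bigr)\qquad\text{for every } n.
\]
The crucial feature is that this cover uses only \emph{one} cylinder per level-$n$ vertex, i.e. $n+1$ cylinders in all, rather than the $\binom{n}{y}$ cylinders reaching $(n-y,y)$ which together carry the full measure.

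Since each cylinder from $(0,0)$ to the vertex $(n-y,y)$ has $\mu_\alpha$-measure $\alpha^{y}(1-\alpha)^{n-y}$, the cover gives
\[
\mu_\alpha(X_{\xi,\max})\ \le\ \sum_{y=0}^{n}\alpha^{y}(1-\alpha)^{n-y}\ \le\ (n+1)\,\bigl(\max\{\alpha,1-\alpha\}\bigr)^{n},
\]
which tends to $0$ as $n\to\infty$ because $0<\alpha<1$ forces $\max\{\alpha,1-\alpha\}<1$. As the left-hand side is independent of $n$, we conclude $\mu_\alpha(X_{\xi,\max})=0$, and likewise $\mu_\alpha(X_{\xi,\min})=0$; combined with the reduction of the first paragraph this proves the proposition. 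I expect the only genuinely substantive step to be the rigidity observation that the backward-maximal path to each vertex is unique, which is what collapses the cover to $n+1$ cylinders; the measure sum is then a routine geometric estimate. One should merely take minor care that the uniqueness argument also handles the boundary vertices, where an edge is simultaneously maximal and minimal, so that both $X_{\xi,\max}$ and $X_{\xi,\min}$ are treated by the same counting.
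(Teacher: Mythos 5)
Your proposal is correct and follows essentially the same route as the paper: the unique backward-maximal (resp.\ backward-minimal) path to each level-$n$ vertex collapses the cover of $X_{\xi,\max}$ (resp.\ $X_{\xi,\min}$) to $n+1$ cylinders, each of measure at most $\bigl(\max\{\alpha,1-\alpha\}\bigr)^n$, and the orbit $X'_\xi$ is then a countable union of null sets. The only difference is cosmetic: you spell out the invariance argument for the orbit and work with maximal paths first, whereas the paper works with minimal paths and leaves the invariance step implicit.
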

\begin{proof}
Let $s = \max\{\alpha,1-\alpha\}$.
For each vertex $v \in \mathscr V$ there is a unique (finite) reverse path from $v$ to the root $(0,0)$ that follows only minimal edges. (The corresponding cylinder set $C_{\min}(v)$ corresponds to the base of a column in the cutting and stacking representation of the system.) If $v$ is at level $n$, then $\mu_\alpha(C_{\min}(v)) \leq s^n$. Thus
\begin{equation}
\mu_\alpha \left( \bigcup_{v \in \mathscr V_n} C_{\min}(v) \right) \leq (n+1) s^n .
\end{equation}
Since each infinite path that follows only minimal edges is in the decreasing intersection of the sets $( \cup_{v \in \mathscr V_n} C_{\min}(v) )$, the set of minimal paths has $\mu_\alpha$ measure 0.
Similarly, the set of maximal paths has measure $0$, so $X'_\xi$ is the countable union of sets of measure $0$.
\end{proof}

We assign {\em basic blocks} $B_\xi(x,y)$ on the alphabet $A=\{a,b\}$ to the vertices $(x,y)$ of $P$ as follows. The vertices $(i,0)$, for all $i \geq 1$, are assigned the block $a$; the vertices $(0,j)$, for all $j \geq 1$, are assigned the block $b$; and each vertex $(x,y)$ for which both $x$ and $y$ are at least 1 is assigned the block $B_\xi(x,y-1)B_\xi(x-1,y)$ if $\xi(x,y)=0$, and $B_\xi(x-1,y)B_\xi(x,y-1)$ if $\xi(x,y)=1$. We say that $n=x+y$ is the {\em level} of the basic block $B_\xi(x,y)$. The basic block $B_\xi(x,y)$ is the initial block of length $|B_\xi(x,y)|$ of the 1-coding of the minimal path from $(0,0)$ to $(x,y)$.
The number of $a$'s in $B_\xi(x,y)$ is given by the binomial coefficient $ C(x+y-1,x-1)$, and the number of $b$'s by $ C(x+y-1,y-1)$.
If we are dealing with a fixed ordering, the subscript $\xi$ can be omitted.

For each $k \geq 1$ basic blocks $B_\xi ^k(x,y)$ on the symbols of $\mathscr P_k$ of the $k$-coding are defined in a similar way. We retain the meaning of the coordinates $x$ and $y$---the diagram is not telescoped. When $x+y=k$, for each $m=0,1,\dots , k$ there are $ C(k,m)$ paths from the root to the vertex at $(k-m,m)$. Since they are linearly ordered by $\xi$, we may label them in order as $P_{k,m}^1, \dots , P_{k,m}^{ C(k,m)}$. These labels $P_{k,m}^s$, for all $m$ and $s$, may be taken as the $2^k$ symbols of the alphabet $\mathscr P_k$. For each $m=0,1,\dots,k$ the basic block at the vertex $(k-m,m)$ is $P_{k,m}^1 \dots P_{k,m}^{ C(k,m)}$. To form the basic blocks at each level $n>k$, the level $n-1$ basic blocks are concatenated according to the ordering $\xi$, as above. Note that while for $k>1$, the corresponding alphabet for $\Sigma_k$ is not $\{a,b\}$, we can still talk about counts of $a$'s and $b$'s associated to $B^k_\xi(x,y)$ by considering the factor map $\Sigma_k\to \Sigma_1$. Not only does each basic block $B^k_{\xi}(x,y)$ determine the vertex $(x,y)$, but also the $k$-symbol counts in $B^k_{\xi}(x,y)$ determine $(x,y)$.

\section{The Pascal system with any ordering is essentially expansive}

For any $k$ and any path $\gamma$ in an arbitrary adic system, due to the structure of the diagram the $k$-coding of the orbit of $\gamma$ has a natural hierarchical structure of factorings for each {$n\geq k$} into basic blocks of level $n$.
This is evident for substitution subshifts and is especially simple (periodic) for odometers.
When an iterate of $\gamma$ under $T$ enters the cylinder determined by a minimal path from the root to a vertex $v$ at level $n$, the coding at that instant begins an occurrence of the basic block at level $n$. (In the cutting and stacking version, the orbit of $\gamma$ enters the base of a column and proceeds up the column.) In the following we continue to focus on the Pascal graph.

{
Fix an ordering $\xi$ of the Pascal graph $P$, a coding length $k \geq 1$, and a sequence $\omega_k \in \mathscr P_k^\Z$. For each $n\geq k$ the basic blocks of level $n$ are distinct (since they determine different vertices), and thus for fixed $n=x+y$, $B_\xi^k(x,y)$ determines the vertex $(x,y)$ of $P$ to which it is associated.
Suppose that, for each $n\geq k$, $\omega_k$ factors into a concatenation of basic blocks of level $n$ so that the factorizations are consistent across levels and respect the ordering, in the following sense: for all $n\geq k$, each basic block of the level $n+1$ factorization is the concatenation according to the fixed ordering of two adjacent basic blocks of the level $n$ factorization.
More precisely, suppose that for each $n \geq k$ we may write $\omega_k$ as a concatenation $\dots W_{n,-1}W_{n,0} W_{n,1} W_{n,2} \dots$ of blocks $W_{n,i}$, in such a way that
\begin{enumerate}
\item each $W_{n,i}$ is a basic block $B_\xi^k(x_{n,i},y_{n,i})$ of level $n$ with respect to $\xi$;
\item there is a function $j: \N \times \Z \to \Z$ such that $j(n,i+1)=j(n,i)+2$ for all $n$ and $i$, and, for each $n>1$ and each $i$, $W_{n,i}=W_{n-1,j(n,i)}W_{n-1,j(n,i)+1}$;
  \item if $\xi(x_{n,i},y_{n,i})=0$, then $W_{n-1,j(n,i)}=B_\xi^k(x_{n,i},y_{n,i}-1)$ and $W_{n-1,j(n,i)+1}=B_\xi^k(x_{n,i}-1,y_{n,i})$, while if $\xi(x_{n,i},y_{n,i})=1$, then $W_{n-1,j(n,i)}=B_\xi^k(x_{n,i}-1,y_{n,i})$ and $W_{n-1,j(n,i)+1}=B_\xi^k(x_{n,i},y_{n,i}-1)$.
\end{enumerate}
}

{
Such a system of factorings will be called a {\em $\xi$-consistent factoring scheme for $\omega_k$}. Note that for every $k$ the $k$-coding of every path in $X$ (even if one-sided) has a natural $\xi$-consistent factoring scheme determined by the sequence of vertices through which it passes. In principle, for some $k$ a $k$-coding of a path might have more than one consistent factoring scheme. In the following Proposition we show that each consistent factoring scheme determines a unique corresponding path. In Theorem \ref{thm:coding} and Corollary \ref{cor:recognizability} we show that the 3-coding of every path in $X \setminus X_\xi'$ has a unique factorization scheme since the 3-coding corresponds to a unique path.
A path in $X'_\xi$ (in the orbit of a maximal or minimal path) may have only a 1-sided $k$-coding. The following Proposition covers this case as well.
}

\begin{prop}\label{prop:consistentscheme}
{Suppose that an ordering $\xi$ of the Pascal graph $P$, $k \geq 1$, and a sequence $\omega \in \mathscr P_k^\Z$ with a $\xi$-consistent factoring scheme are given.
 Then there is a unique path $\gamma$ such that the $k$-coding of the orbit of $\gamma$ under the adic transformation $T_\xi$, insofar as defined, is (if necessary a truncation of) the given sequence $\omega$ and the natural factoring defined by the path coincides with the given one. If $\gamma \in X'_\xi$, $T$ can be extended so as to give a complete $\Z$-orbit which has the complete $k$-coding $\omega$.}
\end{prop}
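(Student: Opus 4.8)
The plan is to read the path $\gamma$ directly off the factoring scheme and then to match heights within columns to positions within blocks. First I would use the symbol at the origin to pin down the initial segment of $\gamma$: since $\omega_0 \in \mathscr P_k$ is one of the labels $P_{k,m}^s$, it names a specific initial path of length $k$ to the vertex $(k-m,m)$, and I take $\gamma_1\cdots\gamma_k$ to be that path. For $n \ge k$ the consistency conditions (2) and (3) force the vertices $(x_{n,i_n},y_{n,i_n})$ whose level-$n$ blocks $W_{n,i_n}$ contain the origin to satisfy that $(x_{n+1,i_{n+1}},y_{n+1,i_{n+1}})$ is obtained from $(x_{n,i_n},y_{n,i_n})$ by a single Pascal edge; concatenating these edges extends $\gamma_1\cdots\gamma_k$ to a full infinite path $\gamma$. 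This already shows that any admissible path must pass through exactly these vertices, which will give uniqueness once heights are also controlled.

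The key step is an induction on $n \ge k$ showing that, if $p_n$ denotes the offset of the origin inside the block $W_{n,i_n}$, then the initial segment $\gamma_1\cdots\gamma_n$ is the path sitting at height $p_n$ among all $\xi$-ordered paths from the root to $(x_{n,i_n},y_{n,i_n})$; equivalently, $\gamma_1\cdots\gamma_n$ corresponds to the entry at offset $p_n$ in the basic block $B_\xi^k(x_{n,i_n},y_{n,i_n})$ read as a column. The base case $n=k$ is immediate from the labeling $P_{k,m}^s$. For the inductive step I use that $W_{n+1,i_{n+1}}$ is the $\xi$-ordered concatenation of the two children's blocks, together with the fact that the $\xi$-order on paths to a vertex lists all paths through the $\xi$-smaller child before those through the $\xi$-larger one; tracking whether the origin's block is the lower or upper half then gives $p_{n+1}=p_n$ or $p_{n+1}=p_n+|W_{n,j}|$, which is exactly the height of $\gamma_1\cdots\gamma_{n+1}$ in the enlarged column. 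Granting this, whenever $0\le p_n+m<|W_{n,i_n}|$ the iterate $T_\xi^m\gamma$ stays inside the column for $(x_{n,i_n},y_{n,i_n})$ at height $p_n+m$ --- here one must check that the adic successor of a path that is not yet maximal in its column leaves all edges beyond level $n$ unchanged and merely advances the height by one --- so its $k$-symbol is the entry at offset $p_n+m$ of $W_{n,i_n}$, namely $\omega_m$.

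It remains to see how far this matching extends. Writing $q_n=|W_{n,i_n}|-1-p_n$ for the room to the right, the recursion above shows that both $p_n$ and $q_n$ are nondecreasing in $n$. If both tend to infinity, every $m\in\Z$ is eventually covered and the two-sided $k$-coding of the $T_\xi$-orbit of $\gamma$ equals $\omega$. If instead $p_n$ is eventually constant, the origin is permanently at the bottom of its column, so some backward iterate of $\gamma$ is a minimal path and $\gamma\in\mathscr O(X_{\xi,\min})\subseteq X'_\xi$; symmetrically, bounded $q_n$ forces $\gamma\in\mathscr O(X_{\xi,\max})$. In these $X'_\xi$ cases the $T_\xi$-coding is only one-sided, i.e. a truncation of $\omega$. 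To recover the full $\Z$-orbit I would apply the whole construction to each shifted sequence $\sigma^m\omega$ (whose factoring scheme is again $\xi$-consistent), obtaining paths $\gamma^{(m)}$ with $\gamma^{(0)}=\gamma$; comparing heights at a high common level shows $\gamma^{(m+1)}=T_\xi\gamma^{(m)}$ wherever $T_\xi$ is defined, so setting $\tilde T\gamma^{(m)}=\gamma^{(m+1)}$ extends $T_\xi$ (necessarily carrying maximal paths to minimal paths at the column seams) to a complete orbit whose complete $k$-coding is $\omega$. Uniqueness follows because any path realizing $\omega$ with the given factoring must pass through each $(x_{n,i_n},y_{n,i_n})$ at height $p_n$, and vertex-plus-height determines the first $n$ edges; letting $n\to\infty$ pins down $\gamma$ entirely.

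I expect the main obstacle to be the bookkeeping that links the purely combinatorial factoring scheme to the genuine dynamics of $T_\xi$: precisely, verifying that advancing one position in $\omega$ corresponds to applying $T_\xi$ (that the adic successor acts as a height increment within a column and does not disturb the tail beyond the current level), and handling the $X'_\xi$ boundary cases, where the orbit must be continued across maximal-to-minimal seams so that the one-sided coding is completed to the full two-sided sequence $\omega$.
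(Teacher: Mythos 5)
Your proposal is correct and follows essentially the same route as the paper's proof: you read off the vertex sequence from the central (origin-containing) blocks, identify the initial segment of $\gamma$ as the path at height $p_n$ in the column at that vertex (the paper's $T_\xi^{k_n}$ applied to the minimal path), match the coding on the windows $[-p_n,\,|W_{n,i_n}|-1-p_n]$, and handle the $X'_\xi$ case by running the construction on shifts of $\omega$ and declaring the resulting paths to be successive, exactly as the paper does with $\sigma\omega$. Your height-induction and monotonicity bookkeeping for $p_n,q_n$ just makes explicit what the paper leaves implicit.
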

\begin{proof}

Suppose that an ordering $\xi$ and $\omega \in \mathscr P_k^\Z$ with a $\xi$-consistent factoring scheme are given. Denote by $CB_n$ the central basic block of level $n$ in the given factorization,
by which we mean the basic block that includes the coordinate 0.
 Let $k_{n} \in \{0,1,\dots, |CB_n|-1\}$ be such that $\sigma^{-k_{n}}\omega$ has origin at the beginning of $CB_n$.
Form a path $\gamma$ which at each level $n$ is at vertex $v_n$ determined by the level $n$ central block $CB_n$, and with initial segment of length $n$ equal to $T_\xi^{k_n}$ applied to the minimal path from $(0,0)$ to $v_n$. The central symbol of $\omega$ determines the vertices of $\gamma$ at levels $0,\dots, k$. Then the $k$-coding $\omega_k(\gamma)$ of $\gamma$ equals the given sequence $\omega$ on the range of coordinates $I_n=[-k_n,|CB_n|-1-k_n]$ for every $n$. Thus the coding and factorization determine the sequence of vertices through which the path passes, and no other path can have the same coding and natural factorization.

{
If $\cup \{I_n:n=1,2,\dots\}=\Z$, then $\gamma \in X \setminus X'_\xi$ and the full coding $\omega_k(\gamma)$ equals the full sequence $\omega$. Otherwise, $\omega$ tells us how to define $T_\xi$ on a maximal or minimal path in such a way that $\omega$ is still the coding of a complete orbit.
For example, suppose that $k_n=|CB_n|-1$ for all $n$, so that for each $n$ the path $\gamma$ defined above is maximal from the root to $v_n$ and hence is in $X_{\xi,\max}$.
Apply the above procedure to $\sigma\omega$ to produce a path $\gamma_1$, and define $T_\xi\gamma=\gamma_1$. Then the right half of $\omega$ is the $k$-coding of the forward orbit of ${\gamma_1}$ under $T_\xi$. So we may regard the full sequence $\omega$ as the $k$-coding of the orbit of $\gamma$ under the extension of $T_\xi$ to part of $X'_\xi$. (Note that since the sequence $\omega \in \mathscr P_k^\Z$ is given, $|CB_n| \to \infty$, so that $\gamma \notin \{\gamma_{-\infty}, \gamma_\infty\}$.)}
\end{proof}

\begin{lem}\label{block containment} Fix an ordering $\xi$ of the Pascal diagram, with path space $X$. Suppose $\gamma$, $\gamma'$ are distinct paths in
$X$
 that are determined by sequences $\omega, \omega' \in \mathscr{P}_3^\Z$ with $\xi$-consistent factoring schemes
 and have full $\Z$-orbits under $T_\xi$ (extended if necessary)
 as in Proposition \ref{prop:consistentscheme}.
 Suppose further that $\gamma$ and $\gamma'$ follow only minimal edges from $(0,0)$ to distinct vertices $(x,y)$ and $(x',y')$ respectively, where $x+y=x'+y'=n$ for some $n\geq 1$. Then the 3-codings of $\gamma$ and $\gamma'$ are not the same. In other words, two such paths that
 follow only minimal edges starting at the root to
 different vertices at the same level do not have the same coding by initial path segments of length 3.

\end{lem}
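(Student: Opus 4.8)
The plan is to suppose $\omega_3(\gamma)=\omega_3(\gamma')=:\omega$ and deduce $(x,y)=(x',y')$, contradicting the hypothesis. The first observation is that since $\gamma$ follows only minimal edges from the root to $(x,y)$, its initial segment of length $n$ is the minimal path to $(x,y)$, so in the natural factoring of its coding the coordinate $0$ falls at the left end of the level-$n$ basic block; that is, the one-sided sequence $\omega_0\omega_1\omega_2\cdots$ begins with $B_\xi^3(x,y)$. The same applies to $\gamma'$, so $\omega_0\omega_1\cdots$ begins with $B_\xi^3(x',y')$ as well. Hence the two level-$n$ basic blocks are both initial segments of a single one-sided sequence. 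If $C(n,x)=C(n,x')$ they have equal length and therefore coincide; since basic blocks of a common level $n\ge 3$ determine their vertices, this already forces $(x,y)=(x',y')$. So I may assume the lengths differ and, after relabeling, that $B_\xi^3(x,y)$ is a proper initial segment of $B_\xi^3(x',y')$.

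The engine of the argument is the rigidity of the level-$3$ factoring. The $2^3$ symbols of $\mathscr P_3$ are partitioned by the level-$3$ vertex they reach, the level-$3$ basic block at $(3-m,m)$ is exactly $P_{3,m}^1\cdots P_{3,m}^{C(3,m)}$, and blocks at distinct level-$3$ vertices use disjoint symbols in strictly increasing superscript order. Consequently the sequence $\omega_0\omega_1\cdots$ has a \emph{unique} decomposition into level-$3$ basic blocks, so the level-$3$ factorings induced by $\gamma$ and by $\gamma'$ must coincide. In particular the level-$3$ vertex occupying coordinate $0$ is common to both: the minimal paths to $(x,y)$ and to $(x',y')$ have the same level-$3$ ancestor. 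When $n\le 3$ this is already a contradiction, because then $\omega_0$ alone records the (distinct) first $n$ edges of the two paths; so the content lies in the range $n\ge 4$.

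Next I would try to propagate the agreement upward, level by level, using consistency of the factorings. At coordinate $0$ both schemes have the left end of the level-$m$ block for every $3\le m\le n$, and such a block is the concatenation of its minimal parent's block (by induction the already-identified level-$(m-1)$ block at $0$) followed by its other parent's block. Whenever the running minimal ancestor is an interior vertex, the next level-$3$ block read off from $\omega$, together with disjointness of the level-$3$ alphabets, pins the non-minimal parent and hence the level-$m$ vertex at $0$ uniquely, so the two schemes stay in step. The difficulty, and the reason the statement is special to the $3$-coding and to minimal paths, is the boundary of the diagram: a boundary vertex has a single parent and its basic block degenerates to one symbol (for instance $B_\xi^3(j,0)=B_\xi^3(3,0)$ for all $j\ge 3$), so when the minimal ancestor runs along a boundary the level-$m$ block at $0$ could a priori be either the interior child's (two-part) block or the boundary child's (one-symbol) block, and the local comparison no longer determines the vertex.

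This boundary ambiguity is exactly where I expect the real work to be, and I would resolve it using the whole bi-infinite coding rather than the central block in isolation. (Comparing only the symbol counts of the two central blocks does not suffice: the required inequalities $C(n-3,x-3+m)\le C(n-3,x'-3+m)$ for $m=0,1,2,3$ can hold with $x\ne x'$, which is why one must invoke the continuation of the coding.) The key point is that once a scheme commits to the boundary branch, the forced level-$3$ parse compels it to place two \emph{equal} boundary blocks side by side, such as $B_\xi^3(j,0)B_\xi^3(j,0)$; but a level-$(j+1)$ vertex has two \emph{distinct} parents, so no such pair can be amalgamated into a level-$(j+1)$ basic block, and the scheme fails to be $\xi$-consistent. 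Since the factorings attached to the genuine paths $\gamma$ and $\gamma'$ are $\xi$-consistent by construction (Proposition \ref{prop:consistentscheme}), the boundary branch is excluded, only the interior branch survives at every level, the two schemes agree up through level $n$, and therefore $B_\xi^3(x,y)=B_\xi^3(x',y')$ and $(x,y)=(x',y')$. The main obstacle is thus the careful boundary bookkeeping in this last step; the reductions that precede it follow directly from disjointness of the level-$3$ alphabets and from consistency.
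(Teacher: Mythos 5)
Your reductions up to the boundary case are sound, and you have correctly located where the real difficulty lies; but the mechanism you propose to resolve that case is false, so the proof has a genuine gap exactly at its crux. The principle you invoke --- that a scheme committing to the boundary branch is forced to place two equal boundary blocks side by side, and that such a pair cannot occur in a $\xi$-consistent factoring scheme --- fails on both counts. First, a boundary vertex $(j+1,0)$ has only \emph{one} parent (your assertion that ``a level-$(j+1)$ vertex has two distinct parents'' is exactly what breaks down on the boundary), so its basic block is a single level-$j$ block and consistency never demands that two adjacent level-$j$ blocks be amalgamated there. Second, adjacent equal boundary blocks genuinely occur in consistent schemes: take $\xi\equiv 0$, let $\gamma$ be a path that is maximal from the root to $(5,0)$ and then follows the (minimal) edge into $(5,1)$; then $T\gamma$ begins with the minimal path to $(4,1)$, which also runs along the left boundary through $(3,0)$ and $(4,0)$. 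In the natural (hence $\xi$-consistent) factoring of this orbit's $3$-coding, the positions of $\gamma$ and $T\gamma$ carry two adjacent \emph{complete} level-$3$ blocks $P_{3,0}^1P_{3,0}^1$, and likewise two adjacent complete level-$4$ blocks, since $B^3_\xi(3,0)=B^3_\xi(4,0)=P_{3,0}^1$; they are never amalgamated with each other, because at level $5$ the second becomes the prefix of $B^3_\xi(4,1)$ while the first remains the singleton $B^3_\xi(5,0)$, and at level $6$ these two \emph{distinct} factors merge into $B^3_\xi(5,1)=B^3_\xi(5,0)B^3_\xi(4,1)$. So no contradiction is available, and the scenario you must exclude --- $\gamma$ riding the boundary while $\gamma'$ enters the interior --- is realized by two perfectly consistent schemes whose codings agree for a long time. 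Showing that they nevertheless disagree somewhere is precisely what the paper's proof spends most of its effort on (its ``final and most complex case''), and it does so dynamically rather than combinatorially: it chooses the pair in the orbits with minimal splitting level $n_0$, then applies explicit powers $T$, $T^{k_1}$, $T^{k_2}$ (with $k_1=|B(x_0-1,1)|+1$ and $k_2=k_1+|B(x_0-2,2)|$) to force either a split below level $n_0$, contradicting minimality, or an explicit disagreement of the $1$-codings of the form $a^{i_1}ba^{j_1}$ versus $a^{i_0}ba^{j_0}$ immediately after the origin.

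There is also a secondary gap in your interior induction step. The claim that ``the next level-$3$ block read off from $\omega$, together with disjointness of the level-$3$ alphabets, pins the non-minimal parent'' is unjustified: the minimal paths to the two candidate parents $(x+1,y-1)$ and $(x-1,y+1)$ can share their first three edges, so both candidate basic blocks begin with the \emph{same} level-$3$ block, and one candidate block may be a prefix of a continuation that the other also matches. What your step really needs is uniqueness of the level-$m$ factorization of the whole sequence; but that is Corollary \ref{cor:recognizability}, which the paper \emph{derives} from this Lemma via Theorem \ref{thm:coding}, so assuming it here is essentially circular. (If instead you strengthen the induction hypothesis so that the two schemes agree as factorizations of the entire line, the interior step does go through; but then the boundary ambiguity must be excluded at every coordinate of the line, not only at the origin, which returns you to the gap above.)
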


\begin{proof}

Suppose $\gamma$ and $\gamma'$ are as above. In particular, we may assume that the 1- (and 2-) codings of $\gamma$ and $\gamma'$ are the same: $\omega_1(\gamma)=\omega_1(\gamma')$ and $\omega_2(\gamma)=\omega_2(\gamma')$ (otherwise we are done).
The longest minimal path from $(0,0)$ that is contained in both $\gamma$ and $\gamma'$ ends at some vertex $v=(x_0,y_0)$ at level $n_0=x_0+y_0$. In other words, $\gamma$ and $\gamma'$ \emph{split} at $v$.
If $n_0<3$ then $\gamma$ and $\gamma'$ would disagree on one of their first three edges and we would be done; we therefore consider only the cases where $n_0 \geq 3$. We may assume that no paths with a lower level splitting occur (at the same time) in the forward orbits of $\gamma$ and $\gamma'$, respectively.
Specifically, find a smallest $n_0$ for which there exists an $r$ such that $T^r\gamma$ and $T^r\gamma'$ are minimal until distinct vertices at level $n_0+1$ and contain the same edges until level $n_0$,
replace $\gamma$ and $\gamma'$ by $T^r \gamma$ and $T^r \gamma'$, and define $v$ to be the vertex where this latter pair split. It is the minimality of $n_0$ that we will exploit in the following proof by contradiction.

We consider several cases according to where in the diagram the vertex $v$ can occur. Our first case is when $v$ is an interior vertex. In other words, let $v \in U=\{(x,y):x>0$ \text{ and } $y>0\}$.
Since both paths are minimal until level $n>n_0$, there exist two minimal edges with source $v$; label $\gamma, \gamma'$ so that the left one is contained in $\gamma$ and the right one contained in $\gamma'$.
Let $k_1$ be the length of the basic block at $v$ and consider the paths $T^{k_1}\gamma$ and $T^{k_1}\gamma'$. Then at level $n_0$, $T^{k_1}\gamma$ passes through vertex $(x_0+1,y_0-1)$ and $T^{k_1}\gamma'$ passes through vertex $(x_0-1,y_0+1)$, and both are minimal until level $n_0=x_0+y_0$.
Therefore the split between the two paths in the orbits of $\gamma$ and $\gamma'$ occurred at some level $n_1$ where $n_1\leq n_0-2$. This contradicts the minimality of $n_0$. (See Figure \ref{interior}, where we indicate minimal edges by dashed lines and maximal edges by solid lines.)

\begin{figure}[h!]
\begin{center}
\begin{tikzpicture}[scale=1]
\draw[red, dashed] (8,4) -- +(1,-1);
\draw[red, dashed] (8,4) -- +(-1,-1);
\draw[red, ultra thick](6,4)--+(1,-1);
\draw[red, ultra thick](10,4)--+(-1,-1);
\fill (8,4) circle (3 pt);
\fill (9,3) circle (3 pt);
\fill (7,3) circle (3 pt);
\fill (6,4) circle (3 pt);
\fill (8,4) circle (3 pt);
\fill (6,4) circle (3 pt);
\fill (10,4) circle (3 pt);
\node[above] at (8,4) {$(x_0,y_0)$};
\node[above left] at (6,4) {$(x_0+1,y_0-1)$};
\node[above right] at (10,4) {$(x_0-1,y_0+1)$};
\node at (8.5,3.5){$\gamma'$};
\node at (7.5,3.5){$\gamma$};
\node at (9.5,3.5){$T^{k_1}\gamma'$};
\node at (6.5,3.5){$T^{k_1}\gamma$};
\end{tikzpicture}
\end{center}
\caption{$T^{k_1}\gamma$ and $T^{k_1}\gamma'$ do not agree into level $n_0=x_0+y_0$.}\label{interior}
\end{figure}
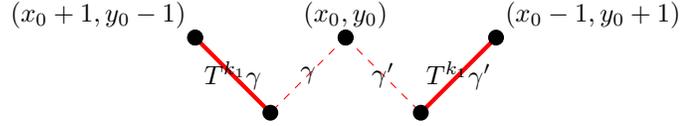

Now consider the case when $v$ is on the left side of the diagram (i.e. $v=(x_0,0)$ for some $x_0\geq 3$).
Since $\gamma, \gamma' \notin \{\gamma_{-\infty},\gamma_\infty\}$ ($\gamma_{-\infty}$ and $\gamma_\infty$ do not have a $\xi$-consistent coding scheme), both $\gamma$ and $\gamma'$ must leave the left side of the diagram.
Consider first the case when both leave the left side via minimal edges. See Figure \ref{Stepa}.
Then $n_0=x_0$ and we label so that $\gamma$ contains the minimal path to $(x_0+1,0)$ and $\gamma'$ contains the minimal path to $(x_0,1)$.
Since $\gamma$ leaves the left side of the diagram, it passes through a vertex $(x_1,1)$, and this edge is minimal.
In this case, we will examine the first edge coding of $\gamma$ and $\gamma'$. We first note that $B(x_0,1)=a^{i_0}ba^{j_0}$ where $i_0+j_0=x_0$.
(In general, the basic block at $(x,1)$ for any $x>0$ is of the form $a^{i}ba^{j}$ for some $i+j=x$, and $i$ and $j$ are nondecreasing as $x$ increases.)
Since the edge connecting vertices $(x_1,0)$ and $(x_1,1)$ is minimal, $B(x_1,1)=aB(x_1-1,1)$. Since $x_0<x_1$, $B(x_1-1,1)$ contains $B(x_0,1)$ (the blocks are the same if $x_1=x_0+1$). Therefore $B(x_1,1)=a^{i_1}ba^{j_1}$ with $i_1>i_0$ and $j_1\geq j_0$. Since $\gamma$ and $\gamma'$ are minimal into $(x_1,1)$ and $(x_0,1)$ respectively,  $ a^{i_1}ba^{j_1}$ immediately follows the decimal in $\omega_1(\gamma)$ (i.e. beginning in the origin position) and $ a^{i_0}ba^{j_0}$ immediately follows the decimal in $\omega_1(\gamma')$.
However, $i_1>i_0$ contradicts our assumption that $\gamma$ and $\gamma'$ have the same 1-coding.

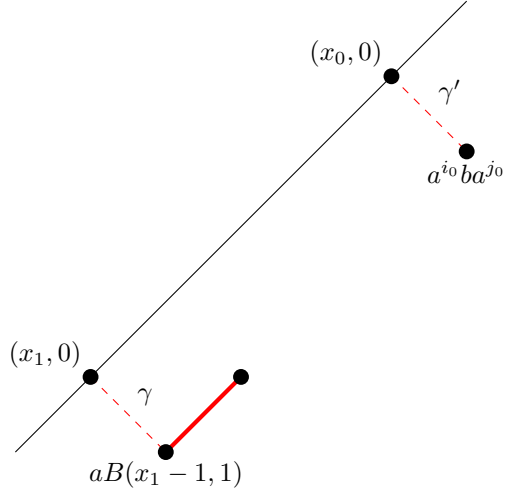
\begin{figure}[h]
\begin{center}
 \begin{tikzpicture}[scale=1]
\draw (0,0) -- +(6,6);
\draw[dashed, red](5,5)--+(1,-1);
\draw[dashed, red](1,1)--+(1,-1);
\draw[ultra thick,red](3,1)--+(-1,-1);
\fill (1,1) circle (3 pt);
\fill (5,5) circle (3 pt);
\fill (6,4) circle (3 pt);
\fill (3,1) circle (3 pt);
\fill (2,0) circle (3 pt);
\node[above left] at (5,5) {$(x_0,0)$};
\node[below] at (6,4) {$a^{i_0}ba^{j_0}$};
\node[above right] at (5.5,4.5){$\gamma'$};
\node[above left] at (1,1) {$(x_1,0)$};
\node[above right] at (1.5,.5) {$\gamma$};
\node[below] at (2,0) {$aB(x_1-1,1)$};
\end{tikzpicture}
\end{center}
\caption{The dotted lines are necessarily minimal and the thick lines are necessarily maximal.}\label{Stepa}
\end{figure}

(An analogous argument rules out the case where $\gamma$ and $\gamma'$ split along the diagonal $x=0$ and contain minimal paths from the root to $x=1$. )

 Our final and most complex case is the case when $v$ is on the left side of the diagram and $\gamma'$ leaves along a minimal edge while $\gamma$ leaves along a maximal edge at some later level.
 Thus $v=(x_0,0)$, for some $x_0\geq3$, $n_0=x_0$, $\gamma$ contains the minimal path to $(x_0+1,0)$, $\gamma'$ contains the minimal path to $(x_0,1)$ and {$\gamma$} exits the left side at $(x_1,0)$, for some $x_1>x_0$.
 If $\gamma$ is not in $X_{\xi,\max}$ it does not continue along maximal edges forever.
 Let $(u,w)$ at level $m=u+w$ be the range of the first minimal edge in $\gamma$ after $(x_1,1)$. Note that $m\geq x_1+2\geq x_0+3$. Since the edge connecting $(x_0,0)$ and $(x_0,1)$ is the first non-maximal edge in $\gamma'$, $T\gamma'$ passes through vertex $(x_0-1,1)$.
 In particular, $T\gamma'$ contains the minimal path from the root vertex into $(x_0-1,1)$. In order not to contradict the minimality of $n_0$, $T\gamma$ must also pass through vertex $(x_0-1,1)$.
 Since $\gamma$ did not pass through $(x_0,1)$, $T\gamma$ must contain the edge connecting $(x_0-1,1)$ and $(x_0-1,2)$ and this edge must be minimal according to the edge ordering. See Figure \ref{Stepb}.
 On the other hand, if $\gamma$ is in $X_{\xi,\max}$, then by assumption
 {$\gamma$} has a complete orbit determined by a complete code with a consistent factorization scheme {(see Proposition \ref{prop:consistentscheme})}. In this case, $T\gamma$ is in $X_{\xi,\min}$.
 In either case, $T\gamma$ actually consists of all minimal edges up to level {$m-1$}$\geq x_0+2$
 and therefore either passes through $(x_0-1,3)$ or $(x_0,2)$ along a minimal edge. It is this fact that we will use to arrive at a contradiction.

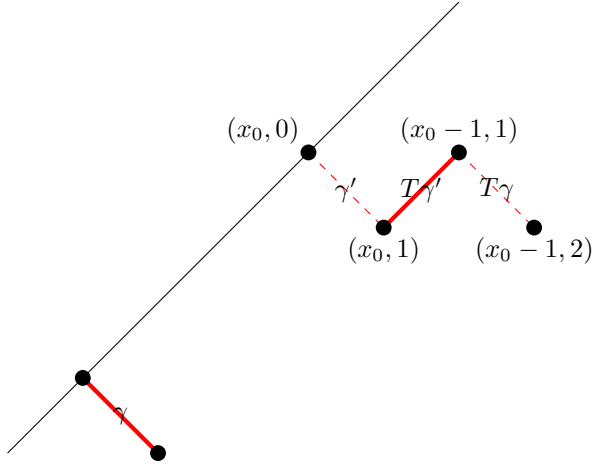
\begin{figure}[h]
\begin{center}
 \begin{tikzpicture}[scale=1]
\draw (0,0) -- +(6,6);
\draw[red, ultra thick] (1,1) -- +(1,-1);
\draw[dashed, red] (4,4)-- +(1,-1);
\draw[red,ultra thick](6,4) -- +(-1,-1);
\draw[dashed, red] (6,4)-- +(1,-1);
\fill (1,1) circle (3 pt);
\fill (4,4) circle (3 pt);
\fill (6,4) circle (3 pt);
\fill (5,3) circle (3 pt);
\fill (7,3) circle (3 pt);
\fill (2,0) circle (3 pt);
\node[above left] at (4,4) {$(x_0,0)$};
\node[below] at (5,3) {$(x_0,1)$};
\node[above] at (6,4) {$(x_0-1,1)$};
\node[below] at (7,3) {$(x_0-1,2)$};
\node at (4.5,3.5){$\gamma'$};
\node at (1.5,.5){$\gamma$};
\node at (5.5,3.5){$T\gamma'$};
\node at (6.5,3.5){$T\gamma$};
\end{tikzpicture}
\end{center}
\caption{The dotted lines are necessarily minimal and the thick red lines are necessarily maximal.}\label{Stepb}
\end{figure}

We now consider $T^{k_1}\gamma$ where $k_1=|B(x_0-1,1)|+1$, in other words, the path in the forward orbit of $T\gamma$ which first contains the maximal edge from $(x_0-2,2)$ to $(x_0-1,2)$. It contains the unique minimal path from $(0,0)$ to $(x_0-2,2)$. By an argument similar to the above, we now know that $T^{k_1}\gamma'$ must also contain the unique minimal path from $(0,0)$ to $(x_0-2,2)$ (since otherwise the orbits of $\gamma$ and $\gamma'$ would split earlier than level $n_0$), but then continue along the edge with range $(x_0-2,3)$, which is necessarily minimal. See Figure \ref{Stepc}.

\begin{figure}[h]
\begin{center}
\begin{tikzpicture}[scale=1]
\draw[red, dashed](3,1)--+(1,-1);
\draw[red, ultra thick](5,1) --+(-1,-1);
\draw[red, dashed](5,1)--+(1,-1);
\draw[red, dashed](1,1)--+(1,-1);
\draw[red, ultra thick](3,1)--+(-1,-1);
\node[above left] at (1,1) {$(x_0,0)$};
\node[above] at (3,1) {$(x_0-1,1)$};
\node[above] at (5,1) {$(x_0-2,2)$};
\node[below] at (4,0) {$(x_0-1,2)$};
\node[below] at (6,0) {$(x_0-2,3)$};
\node[left] at (3.5,.5){$T\gamma$};
\node at (4.5,.5){$T^{k_1}\gamma$};
\node[right] at (5.5, .5){$T^{k_1}\gamma'$};
\fill (1,1) circle (3 pt);
\fill(2,0) circle (3 pt);
\fill (3,1) circle (3 pt);
\fill (5,1) circle (3 pt);
\fill (4,0) circle (3 pt);
\fill (6,0) circle (3 pt);
\end{tikzpicture}
\end{center}
\caption{Step 2.}
\label{Stepc}
\end{figure}

 The issue comes at the next stage when we consider $T^{k_2}\gamma$ and $T^{k_2}\gamma'$, where $k_2=k_1+
 |B(x_0-2,2)|$. We see that $T^{k_2}\gamma'$ passes through vertex $(x_0-3,3)$. However, since the edge contained in $T^{k_1}\gamma$ leaving vertex $(x_0-1,2)$ is minimal, we see from the ordering on the edges that has been established thus far that $T^{k_2}\gamma$ must pass either through vertex $(x_0,0)$ or vertex $(x_0-2,2)$ at level $n_0=x_0$. See Figure \ref{Stepd}. Therefore we again contradict the minimality of $n_0$.

 \begin{figure}[h]
\begin{center}
\begin{tikzpicture}[scale=1]
\draw[red, dashed](3,1)--+(1,-1);
\draw[red, ultra thick](5,1) --+(-1,-1);
\draw[red, dashed](5,1)--+(1,-1);
\draw[red, dashed](1,1)--+(1,-1);
\draw[red, ultra thick](3,1)--+(-1,-1);
\draw[red, ultra thick](7,1)--+(-1,-1);
\draw[red, ultra thick](3,-1)--+(-1,1);
\draw[red, dashed](3,-1)--+(1,1);
\node[above left] at (1,1) {$(x_0,0)$};
\node[above] at (3,1) {$(x_0-1,1)$};
\node[above] at (5,1) {$(x_0-2,2)$};
\node[below] at (3,-1) {$(x_0,2)$};
\node[below] at (6,0) {$(x_0-2,3)$};
\node at (4.5,.5){$T^{k_1}\gamma$};
\node[right] at (6.5, .5){$T^{k_2}\gamma'$};
\node[left] at (1.5,.5){$T^{k_2}\gamma$};
\fill (1,1) circle (3 pt);
\fill(2,0) circle (3 pt);
\fill (3,1) circle (3 pt);
\fill (5,1) circle (3 pt);
\fill (4,0) circle (3 pt);
\fill (6,0) circle (3 pt);
\fill (7,1) circle (3 pt);
\fill (3,-1) circle (3 pt);
\end{tikzpicture}

\begin{tikzpicture}[scale=1]
\draw[red, dashed](3,1)--+(1,-1);
\draw[red, ultra thick](5,1) --+(-1,-1);
\draw[red, dashed](5,1)--+(1,-1);
\draw[red, dashed](1,1)--+(1,-1);
\draw[red, ultra thick](3,1)--+(-1,-1);
\draw[red, ultra thick](7,1)--+(-1,-1);
\draw[red, ultra thick](5,-1)--+(1,1);
\draw[red, dashed](5,-1)--+(-1,1);
\node[above left] at (1,1) {$(x_0,0)$};
\node[above] at (3,1) {$(x_0-1,1)$};
\node[above] at (5,1) {$(x_0-2,2)$};
\node[below] at (5,-1) {$(x_0-1,3)$};
\node[below] at (6,0) {$(x_0-2,3)$};
\node at (4.5,.5){$T^{k_1}\gamma$};
\node[right] at (6.5, .5){$T^{k_2}\gamma'$};
\node at (5.5,.5){$T^{k_2}\gamma$};
\fill (1,1) circle (3 pt);
\fill(2,0) circle (3 pt);
\fill (3,1) circle (3 pt);
\fill (5,1) circle (3 pt);
\fill (4,0) circle (3 pt);
\fill (6,0) circle (3 pt);
\fill (7,1) circle (3 pt);
\fill (5,-1) circle (3 pt);
\end{tikzpicture}
\end{center}
\caption{The two possibilities for $T^{k_2}\gamma$}
\label{Stepd}
\end{figure}
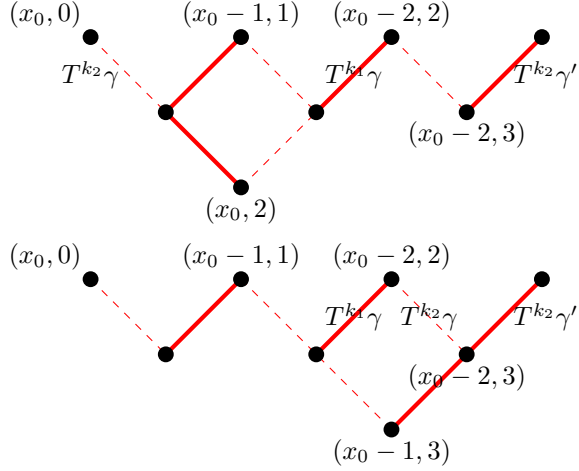

 \end{proof}

\begin{rem} \label{rem:max} Replacing $T$ by $T^{-1}$ in the above argument shows that two paths that are maximal until different vertices at the same level also do not have the same coding by initial segments of length 3.
\end{rem}

\begin{thm}\label{thm:coding}
Fix an ordering $\xi$ of the Pascal diagram, with path space $X$. Suppose $\gamma$, $\gamma'$ are distinct paths in
$X$
 that are determined by sequences $\omega, \omega' \in \mathscr{P}_k^\Z$ with $\xi$-consistent factoring schemes
and have full $\Z$-orbits under $T_\xi$ (extended if necessary) as in Proposition \ref{prop:consistentscheme}.
Then the 3-codings of $\gamma$ and $\gamma'$ are not the same. In other words, for such paths the 3-coding is faithful.
\end{thm}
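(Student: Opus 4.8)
The plan is to deduce faithfulness of the $3$-coding from \emph{uniqueness} of the $\xi$-consistent factoring scheme. By Proposition~\ref{prop:consistentscheme} a consistent scheme already determines its path, so it suffices to prove that a sequence in $\mathscr P_3^\Z$ can carry at most one $\xi$-consistent factoring scheme. Since $\gamma$ and $\gamma'$ have full $\Z$-orbits, their $3$-codings are genuine elements of $\mathscr P_3^\Z$; if these agreed, then the two natural schemes of the single sequence $\omega=\omega_3(\gamma)=\omega_3(\gamma')$ would, because $\gamma\neq\gamma'$, have to differ at some level. I would rule this out by induction on the level $n$, showing the two factorings into level-$n$ basic blocks must coincide.

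For the base case $n=3$ the factoring is forced by the sequence itself: each symbol of $\mathscr P_3$ is a length-$3$ initial path, hence ends at a definite level-$3$ vertex, and the level-$3$ basic block at a vertex $v$ is exactly the ordered list of the distinct paths ending at $v$. Thus a symbol pins down both the block-type containing it and its position within that block, so the decomposition of $\omega$ into level-$3$ blocks is unique and the two schemes agree there. Consequently the smallest level at which they could first diverge is some $n\ge 4$, and in particular they share a common level-$(n-1)$ factoring with $n-1\ge 3$.

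For the inductive step, suppose the schemes agree through level $n-1$ but differ at level $n$. They then share the same bi-infinite concatenation $\cdots u_{-1}u_0u_1\cdots$ into level-$(n-1)$ blocks, with identical cut points. Each scheme groups these consecutively in pairs (conditions (2)--(3)), i.e.\ selects one of the two possible phases of a pairing of $\Z$; if the level-$n$ factorings differ they must use \emph{opposite} phases, and hence \emph{every} adjacent pair $(u_i,u_{i+1})$ occurs as a level-$n$ block in one of the two schemes. Because a level-$n$ basic block is the ordered concatenation of the blocks at the two parents of its vertex, and blocks determine vertices, this means that for every $i$ the vertices carrying $u_i$ and $u_{i+1}$ are the two parents, in the correct $\xi$-order, of a common level-$n$ vertex.

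I expect the heart of the argument to be extracting a contradiction from this last condition. Applying it to three consecutive blocks $u_{i-1},u_i,u_{i+1}$ shows $u_i$ is a common parent of two level-$n$ vertices; the ordered-block requirement forbids these children from coinciding, since each vertex has a fixed minimal and a fixed maximal parent (so the two orders $u_iu_{i+1}$ and $u_{i+1}u_i$ cannot both be valid blocks for one vertex). Writing $u_i=(x,y)$, the neighbors are then forced to be $(x+1,y-1)$ and $(x-1,y+1)$, so the $x$-coordinates along $\cdots u_{i-1}u_iu_{i+1}\cdots$ are strictly monotone. But all level-$(n-1)$ vertices lie on the finite antidiagonal $x+y=n-1$, so no bi-infinite strictly monotone sequence of them can exist (such a sequence would run off the diagram; boundary vertices, having a single parent, only make the pairing fail sooner). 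This contradiction shows the schemes cannot diverge at level $n$, completing the induction and the theorem. The same bookkeeping is what underlies the low-level minimal/maximal splittings treated directly in Lemma~\ref{block containment} and Remark~\ref{rem:max}; the combinatorial reformulation above simply packages those cases uniformly.
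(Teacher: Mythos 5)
Your strategy---prove directly that a sequence in $\mathscr P_3^\Z$ admits at most one $\xi$-consistent factoring scheme and then invoke Proposition~\ref{prop:consistentscheme}---is legitimate and in fact reverses the paper's logic (the paper proves the theorem dynamically and only afterwards derives scheme-uniqueness as Corollary~\ref{cor:recognizability}); your base case is right, and so is the interior-vertex analysis ($w\neq w'$, hence strictly monotone $x$-coordinates). The gap is the sentence asserting that each scheme ``selects one of the two possible phases of a pairing of $\Z$.'' That dichotomy presumes every level-$(n-1)$ block is absorbed into a level-$n$ pair, but the basic blocks at boundary vertices $(m,0)$ and $(0,m)$ are single symbols which cannot be written as a concatenation of two lower-level blocks: in any factoring that contains them they must persist as unpaired singletons from level to level. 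And they cannot be wished away. The $T_\xi$-orbit of a path $\gamma$ consists of all paths eventually equal to $\gamma$, so for every $n$ the orbit passes through $(n,0)$ or $(0,n)$ (route a path straight down the boundary and then over to $\gamma$'s tail); hence the natural factoring of the $3$-coding of \emph{every} path the theorem is meant to cover contains boundary singletons at every level. Indeed, if one reads condition (2) of the definition literally, so that all blocks above level $3$ must pair, then no path whatsoever has a consistent scheme and the theorem is vacuous; since Lemma~\ref{block containment} spends most of its effort on splits at boundary vertices $(x_0,0)$, such paths are certainly intended to be covered, so singletons must be allowed---and then your dichotomy fails.

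Once singletons are allowed, your contradiction evaporates, because two schemes can differ on a \emph{finite} window. Concretely, take $n$ odd, $\xi\equiv 0$ on level $n$, and suppose the common level-$(n-1)$ factoring contains a consecutive run $u_p,\dots,u_q$ at vertices $v_{p+m}=(n-1-m,m)$, $0\le m\le n-1$, crossing the level from $(n-1,0)$ to $(0,n-1)$. Then the grouping $|u_p|(u_{p+1}u_{p+2})\cdots(u_{q-1}u_q)$ (left boundary block left as a singleton) and the grouping $(u_pu_{p+1})\cdots(u_{q-2}u_{q-1})|u_q|$ (right boundary block left as a singleton) are both locally valid at level $n$ and agree outside $[p,q]$: every adjacent pair in the window is an ordered parent-pair, and the $x$-coordinates are strictly monotone---but only along a window of length $n$, which is perfectly consistent with the finiteness of the level. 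Ruling out exactly such finite-defect configurations cannot be done one level at a time; it requires cross-level or dynamical information, and that is precisely the content of the hardest case of Lemma~\ref{block containment} (paths splitting at $(x_0,0)$, tracked through the iterates $T^{k_1}\gamma$ and $T^{k_2}\gamma$ using minimality of the splitting level). A final warning sign: your argument nowhere uses $k=3$; if it were complete it would prove verbatim that the $1$-coding is faithful on the same class of paths, which the paper explicitly poses as an open question at the start of its last section.
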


\begin{proof}
Suppose, to the contrary, that there are two such distinct paths $\gamma$ and $\gamma'$ whose codings by initial segments of length 3 are the same. There exists an $r\geq{0}$ such that $T^{-r}\gamma$ and $T^{-r}\gamma'$ share the same minimal path until vertex $v=(x_0,y_0)$, for some $x_0,y_0$ with $n_0=x_0 + y_0 \geq 3$, and then follow different edges out of that vertex. Replacing $\gamma$ and $\gamma'$ by $T^{-r}\gamma$ and $T^{-r}\gamma'$, we may assume that $\gamma$ and $\gamma'$ have this property. If $v$ is on the left or right side of the diagram, then $\gamma$ and $\gamma'$ are minimal (or maximal) to different vertices, and so, by Lemma \ref{block containment} (or Remark \ref{rem:max} respectively), this cannot happen. Thus $x_0, y_0 >0$; in other words, $v$ is an ``internal'' vertex.

If the first edges after $v$ in $\gamma$ and $\gamma'$ are both maximal, then either $T^{-1}\gamma$ contains a maximal path from $(0,0)$ to $(x_0+1,y_0-1)$ and $T^{-1}\gamma'$ contains a maximal path to $(x_0-1,y_0+1)$, or vice versa. It then follows from Remark \ref{rem:max} that the 3-codes for $T^{-1}\gamma$ and $T^{-1}\gamma'$ (and hence for $\gamma$ and $\gamma'$) cannot be the same. If the first edges after $v$ in $\gamma$ and $\gamma'$ are both minimal, then by Lemma \ref{block containment} $\gamma$ and $\gamma'$ have different 3-codings.

So, without loss of generality, we may assume that the first edge in $\gamma$ after $v$ is maximal and the first in $\gamma'$ is minimal.
Assume further that the coordinate axes are chosen so that $\gamma$ goes from $v$ to $(x_0+1,y_0)$ and $\gamma'$ goes from $v$ to $(x_0,y_0+1)$.

Now $T^{|B(x_0,y_0)|}\gamma'$ begins with a minimal path from $(0,0)$ to $(x_0-1,y_0+1)$. So $T^{|B(x_0,y_0)|}\gamma$ has the same property, where it is possible that $T^{|B(x_0,y_0)|}\gamma$ is determined by $\omega'$ according to Proposition \ref{prop:consistentscheme}.
(If these paths were at different vertices at level $n_0$, the Lemma would tell us that they had distinct 3-codings.) So these paths pass through the vertex $(x_0-1,y_0+1)$. Note that if $\gamma$ is not in the orbit of a maximal path, then there is a finite string of maximal edges in $\gamma$ descending from $v$ and followed by a single minimal edge from a vertex $v_1=(x_1,y_1)$ to a vertex $v_2=(x_2,y_2)$. See Figure \ref{pascal_1}.

\begin{figure}[h!]
\begin{center}
\begin{tikzpicture}[scale=1]
\draw[dashed](0,1) to (0,0) to (1,-1);
\draw(1,-1) to (2,0);
\draw(0,0)--++(-2,-2)--++(1,-1)--++(-1,-1)--++(2,-2);
\draw[dashed](0,-6)--+(1,-1);
\draw(1,-7) to (2,-6);
\draw[dashed](2,-6) --++(1,1)--++(-1,1)--++(2,2)--++(-2,2);
\fill (0,0) circle (2 pt);
\fill (-1,-1) circle (2 pt);
\fill (1,-1) circle (2 pt);
\fill (2,0) circle (2 pt);
\fill (3,-1) circle (2pt);
\fill (0,-6) circle (2pt);
\fill (1,-7) circle (2 pt);
\fill (2,-6) circle (2pt);
\node[above] at (0,0){$(x_0,y_0)$};
\node at (-.5,-.5){$\gamma$};
\node at (.5,-.5){$\gamma'$};
\node at (4,-3){$T^{|B(x_0,y_0)|}\gamma$};
\node[left] at (-1,-1){$(x_0+1,y_0)$};
\node[below] at (1,-1){$(x_0,y_0+1)$};
\node[above] at (2,0){$(x_0-1,y_0+1)$};
\node[below] at (1,-7){$v_2$};
\node[left] at (0,-6){$v_1$};
\end{tikzpicture}
\end{center}
\caption{}
\label{pascal_1}
\end{figure}

We claim that the next edge in $\gamma'$ after $(x_0,y_0+1)$ is minimal. If this were not the case, then $T^{-1}\gamma'$ would contain a maximal path from $(0,0)$ to either $(x_0-2,y_0+2)$ or $(x_0,y_0)$, whereas $T^{-1}\gamma$ would contain a maximal path to $(x_0+1,y_0-1)$.  See Figure \ref{pascal_2}.
Again, this would mean that the 3-codings for $\gamma$ and $\gamma'$ are different.
\begin{figure}[h!]
\begin{center}
\begin{tikzpicture}[scale=1.5]
\draw[dashed](0,1) to (0,0) to (1,-1);
\draw(1,-1) to (2,0);
\draw(0,0)--++(-1,-1);
\draw[dashed] (-2,0) to (0,-2);
\draw(0,-2) to (1,-1);
\node at (.5,-1.5){$\gamma'$};
\node[above] at (0,0){$(x_0,y_0)$};
\fill (-2,0) circle (2 pt);
\fill (1,-1) circle (2 pt);
\fill (0,0) circle (2 pt);
\fill (-1,-1) circle (2 pt);
\fill (2,0) circle (2 pt);
\fill (3,-1) circle (2pt);
\fill (0,-2) circle (2 pt);
\node[left]at (-2,0){$(x_0+1,y_0-1)$};
\node at (-.5,-.5){$\gamma$};
\node at (.5,-.5){$\gamma'$};
\node at (2.5,-.5){$T^{|B(x_0,y_0)|}\gamma$};
\node[left] at (-1,-1){$(x_0+1,y_0)$};
\node[below] at (1,-1){$(x_0,y_0+1)$};
\node[above] at (2,0){$(x_0-1,y_0+1)$};
\draw[dashed](2,0) to (3,-1);
\end{tikzpicture}\vskip .2in
\begin{tikzpicture}[scale=1.5]
\draw[dashed](0,1) to (0,0) to (1,-1);
\draw(1,-1) to (2,0);
\draw[dashed] (-2,0) to (-1,-1);
\draw(0,0)--++(-1,-1);
\draw(1,-1)to (2,-2);
\draw[dashed](2,-2) to (3,-1);
\draw(3,-1) to (4,0);
\fill (4,0) circle (2 pt);
\node[right] at (4,0){$(x_0-2,y_0+2)$};
\fill (0,0) circle (2 pt);
\fill (-1,-1) circle (2 pt);
\fill (1,-1) circle (2 pt);
\fill (2,0) circle (2 pt);
\fill (3,-1) circle (2pt);
\fill(2,-2) circle (2pt);
\fill(-2,0) circle (2pt);
\node[above] at (0,0){$(x_0,y_0)$};
\node[right] at (3,-1){$(x_0-1,y_0+2)$};
\node at (-.5,-.5){$\gamma$};
\node at (.5,-.5){$\gamma'$};
\node at (1.5,-1.5){$\gamma'$};
\node at (2.5,-.5){$T^{|B(x_0,y_0)|}\gamma$};
\node[left] at (-1,-1){$(x_0+1,y_0)$};
\node[below] at (.5,-1){$(x_0,y_0+1)$};
\node[above] at (2,0){$(x_0-1,y_0+1)$};
\draw[dashed](2,0) to (3,-1);
\end{tikzpicture}
\end{center}
\caption{}
\label{pascal_2}
\end{figure}

\begin{figure}[h!]
\begin{center}
\begin{tikzpicture}[scale=1.5]
\draw[dashed](0,1) to (0,0) to (1,-1);
\draw(1,-1) to (2,0);
\draw(0,0)--++(-1,-1);
\draw[dashed] (-2,0) to (-1,-1);
\draw(-1,-1) to (0,-2);
\draw[dashed](0,-2) to (1,-1);
\draw(3,-1) to (4,0);
\fill (4,0) circle (2pt);
\node[right] at (4,0){$(x_0-2,y_0+2)$};
\node[right] at (3.5,-.5){$T^{r}\gamma$};
\node at (.5,-1.5){$\gamma'$};
\node at (-1.5,-.5){$T^{r}\gamma'$};
\fill (-2,0) circle (2 pt);
\fill (1,-1) circle (2 pt);
\fill (0,0) circle (2 pt);
\fill (-1,-1) circle (2 pt);
\fill (2,0) circle (2 pt);
\fill (3,-1) circle (2pt);
\fill (0,-2) circle (2 pt);
\node[above] at (0,0){$(x_0,y_0)$};
\node[left]at (-2,0){$(x_0+1,y_0-1)$};
\node at (-.5,-.5){$\gamma$};
\node at (.5,-.5){$\gamma'$};
\node at (2.5,-.5){$T^{|B(x_0,y_0)|}\gamma$};
\node[left] at (-1,-1){$(x_0+1,y_0)$};
\node[below] at (1,-1){$(x_0,y_0+1)$};
\node[above] at (2,0){$(x_0-1,y_0+1)$};
\draw[dashed](2,0) to (3,-1);
\end{tikzpicture}\vskip .2in
\begin{tikzpicture}[scale=1.5]
\draw[dashed](0,1) to (0,0) to (1,-1);
\draw(1,-1) to (2,0);
\draw(0,0)--++(-1,-1);
\draw[dashed](1,-1)to (2,-2);
\draw(2,-2) to (3,-1);
\draw(3,-1) to (4,0);
\fill (4,0) circle (2 pt);
\node[right] at (4,0){$(x_0-2,y_0+2)$};
\fill (0,0) circle (2 pt);
\fill (-1,-1) circle (2 pt);
\fill (1,-1) circle (2 pt);
\fill (2,0) circle (2 pt);
\fill (3,-1) circle (2pt);
\fill(2,-2) circle (2pt);
\node[above] at (0,0){$(x_0,y_0)$};
\node[right] at (3,-1){$(x_0-1,y_0+2)$};
\node at (-.5,-.5){$\gamma$};
\node at (.5,-.5){$\gamma'$};
\node at (1.5,-1.5){$\gamma'$};
\node at (2.5,-.5){$T^{|B(x_0,y_0)|}\gamma$};
\node[left] at (-1,-1){$(x_0+1,y_0)$};
\node[below] at (.5,-1){$(x_0,y_0+1)$};
\node[above] at (2,0){$(x_0-1,y_0+1)$};
\draw[dashed](2,0) to (3,-1);
\node[right] at (3.5, -.5){$T^r\gamma$};
\end{tikzpicture}
\end{center}
\caption{}
\label{pascal_3}
\end{figure}

So the next edge in $\gamma'$ after $(x_0,y_0+1)$ is minimal. Letting
\begin{equation}
r={|B(x_0,y_0)|+|B(x_0-1,y_0+1)|},
\end{equation}
it follows that $T^{r}\gamma$ contains a minimal path from $(0,0)$ to $(x_0-2,y_0+2)$, whereas $T^{r}\gamma'$ contains a minimal path to either $(x_0+1,y_0-1)$ or $(x_0-1,y_0+1)$.
See Figure \ref{pascal_3}.  
This is ruled out by applying the Lemma to $T^{r}\gamma$ and $T^{r}\gamma'$ .
\end{proof}

{
\begin{cor}\label{cor:recognizability}
For each ordering $\xi$ of the Pascal graph $P$ and each path $\gamma \in X \setminus X'_\xi$, the 3-coding $\omega_3(\gamma) \in \mathscr P_k^\Z$ has a unique $\xi$-consistent factoring scheme.
\end{cor}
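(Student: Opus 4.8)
The plan is to obtain the corollary directly from Theorem \ref{thm:coding} and Proposition \ref{prop:consistentscheme}, with essentially no new combinatorics. Existence of a $\xi$-consistent factoring scheme is already in hand: since $\gamma \in X \setminus X'_\xi$ has a full two-sided $T_\xi$-orbit, the sequence of vertices through which that orbit passes induces, for every $n \geq 3$, a factorization of $\omega_3(\gamma)$ into basic blocks of level $n$, and this natural factoring is a $\xi$-consistent factoring scheme, as noted in the discussion preceding Proposition \ref{prop:consistentscheme}. Thus only uniqueness requires argument.

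For uniqueness, suppose $\mathcal F$ is an arbitrary $\xi$-consistent factoring scheme for $\omega_3(\gamma)$. First I would apply Proposition \ref{prop:consistentscheme} with $k=3$ and $\omega = \omega_3(\gamma)$ to produce the unique path $\delta$ whose natural factoring is $\mathcal F$ and whose $3$-coding, along a complete $\Z$-orbit under $T_\xi$ (extended if necessary), is $\omega_3(\gamma)$. The closing clause of that proposition guarantees that even if $\delta$ lies in $X'_\xi$, the transformation can be extended to furnish $\delta$ with a complete $\Z$-orbit whose full $3$-coding is $\omega_3(\gamma)$. Hence $\delta$ satisfies the hypotheses of Theorem \ref{thm:coding}. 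The path $\gamma$ satisfies those hypotheses as well, via its natural factoring, and has the very same $3$-coding $\omega_3(\gamma)$.

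Now I would invoke the faithfulness conclusion of Theorem \ref{thm:coding}: two paths meeting its hypotheses with identical $3$-codings cannot be distinct. Since $\gamma$ and $\delta$ both have $3$-coding $\omega_3(\gamma)$, it follows that $\gamma = \delta$. Consequently $\mathcal F$, which by construction is the natural factoring of $\delta$, is the natural factoring of $\gamma$, and the latter is determined uniquely by the vertices through which $\gamma$ passes. Therefore every $\xi$-consistent factoring scheme for $\omega_3(\gamma)$ coincides with the natural one, which is the desired uniqueness.

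The argument is short, and I do not expect a serious obstacle; the only point demanding care is verifying that the path $\delta$ extracted from an arbitrary scheme genuinely meets the hypotheses of Theorem \ref{thm:coding}, i.e. that it carries a full $\Z$-orbit (extended if necessary) with complete $3$-coding $\omega_3(\gamma)$, so that the theorem's faithfulness applies to the pair $(\gamma,\delta)$. This is precisely what the final statement of Proposition \ref{prop:consistentscheme} supplies, since $\omega_3(\gamma) \in \mathscr P_3^\Z$ is genuinely two-sided. I would close by remarking that this recognizability statement is the mechanism by which each path in $X \setminus X'_\xi$ is recovered from its $3$-coding, and hence underlies the essentially faithful representation of the Pascal system as the subshift $(\Sigma_3,\sigma)$.
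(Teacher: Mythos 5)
Your argument is correct and is essentially the paper's own proof: existence comes from the natural factoring induced by the orbit of $\gamma$, and uniqueness comes from applying Proposition \ref{prop:consistentscheme} to an arbitrary scheme to produce a path $\delta$ with the same $3$-coding, then invoking Theorem \ref{thm:coding} to force $\delta = \gamma$, so the scheme is the natural one. If anything, your write-up is slightly more careful than the paper's (which phrases the last step as ``a different factoring would determine a different path,'' leaving implicit that a single path has only one natural factoring), but the route is identical.
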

\begin{proof}
Fix a particular ordering $\xi$ on the Pascal graph $P$.
That for each $\gamma \in X\setminus X'_\xi$, $\omega_3(\gamma)$ has such a natural factoring scheme is clear from the definitions of $\omega_3(\gamma)$ and the basic blocks $B_\xi^3(x,y)$, as mentioned previously.
By Proposition \ref{prop:consistentscheme} there is a unique path that has the same 3-coding and natural factoring scheme. Another factoring of the 3-coding of $\gamma$ would determine a different path which would have the same 3-coding as $\gamma$, contradicting the Theorem.
\end{proof}
}

\begin{rem}\label{rem:recognizability}
The preceding Corollary helps to clarify the relationship among paths, factorizations of codings, and factorizations of basic blocks.
Suppose we have an ordering $\xi$ on \emph{any} Bratteli diagram
 and know that there is a subset $X'_\xi$ of the path space $X$ such that for each path $\gamma \in X \setminus X'_\xi$ and each $n \geq k$ the $k$-coding $\omega_k(\gamma)$ admits a unique factorization into basic blocks of level $n$.
 (Perhaps
 $X'_\xi$ has measure 0 for each fully supported ergodic invariant measure.)
 Suppose further that the basic blocks at each level are distinct.
 Then it follows that distinct paths in $X \setminus X'_\xi$ have distinct $k$-codings, since, as in the argument above, the factorization would determine the path.
 Further, each basic block $B_\xi^k (x,y)$ would have a unique factorization into basic blocks of level $n$ for each $n$ with $k\leq n < x+y$, although basic blocks of level $l<k$ might not have unique factorizations. (The central symbol specifies the path up to the root and hence a preferred factorization of the basic blocks of levels $l<k$.) Thus under such hypotheses we would have unique factorization of basic blocks of levels greater than or equal to $k$, as shown for the case of the Pascal diagram, $1$-codings, and special orderings in Lemma \ref{lem:decomp}.
\end{rem}

\section{Topological weak mixing}
Recall that for a fixed ordering $\xi$, $(\Sigma_k,\sigma)$ denotes the subshift on $2^k$ symbols generated by coding paths according to their first $k$ edges. This is a legitimate topological dynamical system---a compact metric space together with a homeomorphism.
(We use the metric $d(\omega,\omega')=1/2^{(\inf\{|n|:\omega(n) \neq \omega'(n)\})}$). By Theorem \ref{thm:coding} the coding is essentially faithful if $k\geq 3$. Denote by $\cb(\Sigma_k)$ the set of bounded complex-valued functions on $\Sigma_k$ whose set of continuity points is residual.
In order to prove that $(\Sigma_k,\sigma)$ is topologically weakly mixing, we follow the plan in \cite{Mela2002,MelaPetersen2005} to check the condition of Keynes and Robertson \cite{KeynesRobertson1969}: a topological dynamical system which is ``closed ergodic" (there is an invariant Borel probability measure on the space for which every closed invariant set has measure 0 or 1) is topologically weakly mixing if and only if every $f \in \cb(\Sigma_k)$ for which there is $\lambda \in \C$ such that $f \circ \sigma = \lambda f$ everywhere must be constant on a dense set.
For this purpose we have to go beyond the polynomial equidistribution theorem of Weyl and quote instead a theorem of F. Hahn \cite{Hahn1965}.

In our situation $(\Sigma_k,\sigma)$ is closed ergodic, since there are fully supported ergodic measures on $\Sigma_k$ (the measures $\mu_\alpha, 0<\alpha<1$, mentioned above). Our system is topologically ergodic (there exist dense orbits). We aim to prove that if $f \in \cb(\Sigma_k)$ and $\lambda \in \C$ satisfy $f \circ \sigma = \lambda f$ everywhere, then $\lambda =1$. Then it will follow from another theorem of Keynes and Robertson \cite{KeynesRobertson1969} (since then $f \circ \sigma = f$ everywhere) that $f$ is constant on a dense set, and consequently the system is topologically weakly mixing. We also use the fact \cite[Lemma 4.18]{MelaPetersen2005} that if $f \in \cb(\Sigma_k)$ and $\lambda \in \C$ satisfy $f \circ \sigma = \lambda f$ everywhere, then every point which has both its forward and backward orbit dense is a point of continuity of $f$. Another essential ingredient is the following extension of the ``Kink Lemma" of \cite{Mela2002, MelaPetersen2005} taking into account the unknown ordering of edges.

\begin{lem}\label{lem:kink}
Let $X$ denote, as above, the set of infinite directed paths on the Pascal graph, and fix an ordering $\xi$. Let $\gamma = (\gamma_r) \in X$, with $\gamma_r$ denoting the edge of $\gamma$ from level $r-1$ to level $r$, be a path to a vertex $(i,j)$, with $i>0$ and $j>0$, which continues to the vertex $(i+1,j+1)$ in such a way that the edge $\gamma_{i+j+2}$ of $\gamma$ that enters $(i+1,j+1)$ is minimal. Let $n=i+j\geq k$ and denote by $e$ the edge with source $(i,j)$ that is not equal to $\gamma_{n+1}$.
We have to consider eight cases, labeled by triples $(a_1,a_2,a_3)$, depending on whether $\gamma_{n+1}$ is maximal ($a_1=$max) or minimal ($a_1=$min), $e$ is maximal ($a_2=$max) or minimal ($a_2=$min), and $\gamma_{n+1}$ has range $(i+1,j)$ ($a_3=$LR) or $(i,j+1)$ ($a_3=$RL).
Let
\begin{equation}\label{eq:returntime}
r_n=
\begin{cases}
 C(n,j) &\text{cases (max, min, LR) and (max, min, RL)}\\
 C(n+1,j+1) &\text{cases (min, min, RL) and (max, max, LR)}\\
 C(n+1,j) &\text{cases (min, min, LR) and (max, max, RL)}\\
 C(n+1,j)+ C(n,j+1) &\text{cases (min, max, LR) and (min, max, RL).}
\end{cases}
\end{equation}
Then $T^{r_n}\gamma$ and $\gamma$ agree on their first $n$ edges. Consequently, if $x$ is a path that begins with the minimal path to $(i,j)$, $z$ is a path that begins with the maximal path to $(i,j)$, $T^m x = \gamma$, and $T^l \gamma=z$, then $d(\sigma^{r_n} \omega_k (\gamma), \omega_k (\gamma)) \leq 1/2^{\min\{m,l\}}$.
\end{lem}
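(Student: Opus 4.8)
The plan is to work entirely in the cutting-and-stacking (column) picture already used above. For a vertex $v=(x,y)$ at level $n$, the paths through $v$ form a column of height $|B_\xi(x,y)|=C(n,y)$ whose base is the cylinder on the minimal path from $(0,0)$ to $v$, and on which $T_\xi$ acts by advancing one rung (leaving at the top). A path through $(i,j)$ sits at the rung equal to the rank of its length-$n$ initial segment among all paths to $(i,j)$, and this rung is precisely what its first $n$ edges record; hence ``$T^{r_n}\gamma$ and $\gamma$ agree on their first $n$ edges'' is the same as ``$T^{r_n}\gamma$ and $\gamma$ occupy the same rung of the column at $(i,j)$.'' So the first assertion reduces to locating $\gamma$ and $T^{r_n}\gamma$ inside the single level-$(n+2)$ block $B_\xi(i+1,j+1)$.

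Since the edge $\gamma_{n+2}$ entering $(i+1,j+1)$ is minimal, the intermediate vertex $w$ (the range of $\gamma_{n+1}$: $(i+1,j)$ in the LR cases, $(i,j+1)$ in the RL cases) is forced to be the minimal parent of $(i+1,j+1)$, so $\gamma$ lies in the first half $B_\xi(w)$ of $B_\xi(i+1,j+1)=B_\xi(w)\,B_\xi(w')$. The crux is to convert the case label $(a_1,a_2,a_3)$ into the local orders of $\xi$ at the two parents of $(i+1,j+1)$: the value $a_1$ records whether $(i,j)$ is the minimal or maximal parent of $w$, while $a_2$—the status of the second edge $e$ out of $(i,j)$, whose range is exactly the other parent $w'$—records whether $(i,j)$ is the minimal or maximal parent of $w'$. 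Feeding these orders into the recursive definition of the basic blocks writes $B_\xi(i+1,j+1)$ explicitly as a concatenation of four level-$n$ blocks, in which $B_\xi(i,j)$ occurs exactly twice: once in $B_\xi(w)$ (the copy containing $\gamma$) and once, later, in $B_\xi(w')$.

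The return time is then the distance between the starts of these two copies, read off as the sum of the intervening block lengths $C(n,\cdot)$ and collapsed by Pascal's rule $C(n,j)+C(n,j-1)=C(n+1,j)$, $C(n,j)+C(n,j+1)=C(n+1,j+1)$ to exactly the value in \eqref{eq:returntime}. For instance in case (min, min, LR) the orders give $B_\xi(i+1,j+1)=B_\xi(i,j)\,B_\xi(i+1,j-1)\,B_\xi(i,j)\,B_\xi(i-1,j+1)$, so the gap is $C(n,j)+C(n,j-1)=C(n+1,j)$; the other seven cases have the same shape, the four RL cases being left–right mirror images of the LR ones. Because the target copy is a later factor of the same block and $\gamma$'s offset inside its copy is preserved, the orbit never reaches the top of this level-$(n+2)$ column; thus $T_\xi$ is defined at every intermediate step and simply advances $\gamma$ by $r_n$ rungs to the identical offset in the next copy of $B_\xi(i,j)$—the same rung of the column at $(i,j)$. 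This proves that $T^{r_n}\gamma$ and $\gamma$ agree on their first $n$ edges. I expect this bookkeeping—pinning down all the parent orders from $(a_1,a_2,a_3)$ and checking that all eight sums reduce to \eqref{eq:returntime}—to be the only real obstacle; the rest is forced.

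For the coding statement, write $h=C(n,j)$ for the height of the column at $(i,j)$. The hypotheses $T^m x=\gamma$ with $x$ at the base and $T^l\gamma=z$ with $z$ at the top say exactly that $\gamma$ occupies rung $m$ and that $l=h-1-m$. Since $T^{r_n}\gamma$ shares the first $n\ge k$ edges with $\gamma$ it occupies the same rung $m$, so for every $s$ with $-m\le s\le l$ both $T^s\gamma$ and $T^s(T^{r_n}\gamma)$ remain in the column at the common rung $m+s$; they therefore share their first $n\ge k$ edges and lie in the same element of $\mathscr P_k$. Equivalently $\sigma^{r_n}\omega_k(\gamma)$ and $\omega_k(\gamma)$ agree at every coordinate $s$ with $-m\le s\le l$, hence at every $s$ with $|s|\le\min\{m,l\}$, giving $d(\sigma^{r_n}\omega_k(\gamma),\omega_k(\gamma))\le 1/2^{\min\{m,l\}}$.
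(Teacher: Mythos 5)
Your proof is correct, and it reaches the paper's conclusion by a genuinely different organization of the argument. The paper proves the return-time formula dynamically and case by case: writing $z=T^{l}\gamma$ for the path maximal to $(i,j)$, it tracks the forward orbit through a chain of explicit way-points (minimal to $(i+1,j-1)$, then maximal to $(i+1,j-1)$, then minimal to $(i-1,j+1)$, and so on), accumulating $r_n$ as a sum of column heights $C(n,\cdot)$ and collapsing the sum with Pascal's rule---eight separate orbit walks, one per case. You instead make a single static computation inside the tower at $(i+1,j+1)$: minimality of $\gamma_{n+2}$ places $\gamma$ in the first factor of $B_\xi(i+1,j+1)=B_\xi(w)B_\xi(w')$; the labels $a_1$ and $a_2$ translate precisely into whether $B_\xi(i,j)$ is the first or second factor of $B_\xi(w)$ and of $B_\xi(w')$ (the latter because $e$ has range $w'$); and $a_3$ decides whether the complementary factors have length $C(n,j-1)$ or $C(n,j+1)$. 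Then $r_n$ is the offset between the two occurrences of $B_\xi(i,j)$ in the resulting four-fold factorization, and the rank--position correspondence for adic towers converts this offset into the assertion that $T^{r_n}\gamma$ and $\gamma$ share their first $n$ edges. You displayed only the case (min, min, LR) and asserted the rest by symmetry; I verified that all eight offsets do reduce to \eqref{eq:returntime}, so this is a legitimate (if compressed) discharge of the bookkeeping. What your route buys is uniformity---one formula instead of eight orbit chases---and a transparent justification that $T$ may legally be applied $r_n$ times, since every intermediate position lies strictly below the top of the level-$(n+2)$ column. (One phrasing caveat: when $a_2=\max$ and $\gamma$ occupies the top rung of the column at $(i,j)$, the terminal path $T^{r_n}\gamma$ \emph{is} the top of the level-$(n+2)$ column, so ``the orbit never reaches the top'' should be read as applying to the steps before the last; the conclusion you draw from it is still valid.) What the paper's route buys is the explicit list of intermediate maximal/minimal paths, which is what its figures display and what makes the case analysis reusable in the weak-mixing argument of Theorem \ref{thm:weakmixing}. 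Your derivation of the final coding estimate from the first statement coincides with the paper's one-line justification, correctly expanded.
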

\begin{proof}
Let $z$ denote the path which is maximal to the vertex $(i,j)$ and agrees with $\gamma$ after level $n=i+j$, so that $z=T^l \gamma$ for some $l \geq 0$.
Suppose first that the edge $\gamma_{n+1}$ is maximal and $e$ is minimal. Then in both cases (max, min, LR) and (max, min, RL) (whether $\gamma_{n+1}$ has range $(i+1,j)$ or $(i,j+1)$) $Tz$ begins with the minimal path to $(i,j)$. Then choosing $m \geq 0$ so that $m+l+1= C(n,j)$ yields that $T^m Tz=T^{m+l+1}\gamma$ agrees with $\gamma$ on their first $n$ edges. Then $r_n= C(n,j)$ has the stated property. See Figure \ref{fig:lem4.1}.

\begin{figure}[h]
\begin{center}
\begin{tikzpicture}[scale=.5]
\draw(0,0)--++(-2,3)--++(1,1)--++(-1,2)--++(1,1)--++(-1,1);
\draw(0,0)--++(2,2)--++(-1,1)--++(1,2)--++(-1,2)--++(1,1);
\draw(0,0)--+(-2,-2);
\draw[dashed](-2,-2)--++(2,-2);
\draw[dashed](0,0)--+(2,-2);
\draw(2,-2)--+(-2,-2);
\fill (0,0) circle (3 pt);
\fill (-2,-2) circle (3 pt);
\fill (2,-2) circle (3 pt);
\fill (0,-4) circle (3 pt);
\node[left]at (0,0){$(i,j)$};
\node[left]at (-2,-2){$(i+1,j)$};
\node[right]at (2,-2){$(i,j+1)$};
\node[below]at (0,-4){$(i+1,j+1)$};
\node at (-2.5,4) {$\gamma$};
\node at (3,4) {$z=T^l\gamma$};
\node[left] at (-1,-1){$\gamma_{n+1}$};
\node[right] at (1,-1){$e$};
\end{tikzpicture}
\end{center}
\caption{Case (max, min, LR)}
\label{fig:lem4.1}
\end{figure}
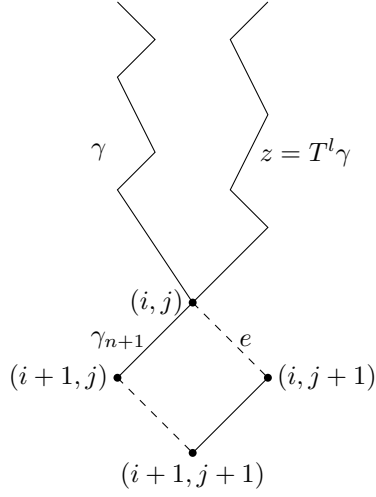

The other cases are similar but differ in the details, and the particular values of $r_n$ that we find are important for the following theorem, so we go through them all.
Suppose now that the edge $e$ is maximal and $\gamma_{n+1}$ is maximal and has range $(i+1,j)$ (case (max, max, LR)). In this case $Tz$ begins with the minimal path to the vertex $(i-1,j+1)$ followed by the minimal edge to $(i,j+1)$ and the maximal edge to $(i+1,j+1)$. Then $u=T^{ C(n,j+1)-1} Tz$ begins with the maximal path to $(i-1,j+1)$, and $y=Tu$ begins with the minimal path to $(i,j)$ followed by the maximal edge to $(i,j+1)$. Choose $m \geq 0$ so that $m+l+1= C(n,j)$. Then $T^m Tu$ agrees with $\gamma$ on their first $n$ edges. We have $T^m Tu=T^m T^{ C(n,j+1)} Tz=T^{m+l+1+ C(n,j+1)}\gamma=T^{ C(n,j)+ C(n,j+1)}\gamma=T^{ C(n+1,j+1)}\gamma$, so $r_n= C(n+1,j+1)$ acts as stated.

If the edge $e$ is maximal and $\gamma_{n+1}$ is maximal and has range $(i,j+1)$ (case (max, max, RL)), the argument is the same as the immediately preceding one, except that $j$ is replaced by $j-1$. Namely, $Tz$ begins with the minimal path to $(i+1,j-1)$, $u=T^{ C(n,j-1)-1}Tz$ begins with the maximal path to $(i+1,j-1)$, $y=Tu$ begins with the minimal path to $(i,j)$ followed by the maximal edge to $(i+1,j)$. Again choose $m \geq 0$ so that $m+l+1= C(n,j)$. Then $T^m Tu$ and $\gamma$ agree on their first $n$ edges,
so we obtain $r_n= C(n,j)+ C(n,j-1)= C(n+1,j)$.

Consider now the case (min, max, LR). Then $Tz$ begins with the minimal path to $(i+1,j-1)$, $u=T^{ C(n,j-1)-1}Tz$ begins with the maximal path to $(i+1,j-1)$, and $Tu$ begins with the minimal path to $(i-1,j+1)$ followed by a minimal and then a maximal edge. Then $y=TT^{ C(n,j+1)-1}Tu$ begins with the minimal path to $(i,j)$ extended by two maximal edges. Thus choosing $m \geq 0$ so that $m+l+1= C(n,j)$ yields that $T^my = T^{m+l+1+ C(n,j+1)+ C(n,j-1)}\gamma=T^{ C(n+1,j)+ C(n,j+1)}\gamma$ agrees with $\gamma$ on the first $n$ edges..

Next, consider the case (min, max, RL). Then $Tz$ is minimal to $(i-1,j+1)$, $u=T^{ C(n,j+1)-1}Tz$ is maximal to $(i-1,j+1)$, and $p=Tu$ is minimal to $(i+1,j)$ extended by a maximal edge.
Then $T^{ C(n,j-1)-1}p$ is maximal to $(i+1,j-1)$, so that $y=T^{ C(n,j-1)}p$ is minimal to $(i,j)$.
Thus if we choose $m$ so that $m+l+1= C(n,j)$, we will have $T^my=T^mT^{ C(n,j-1)}p=T^{m+ C(n,j-1)+ C(n,j+1)+l+1}\gamma = T^{ C(n+1,j)+ C(n,j+1)}\gamma$ and $\gamma$ agreeing on their first $n$ edges.

Now consider the case (min, min, LR). Then $Tz$ starts with the minimal path to $(i+1,j-1)$, $u=T^{ C(n,j-1)-1}Tz$ starts with the maximal path to $(i+1,j-1)$, $y=Tu$ is minimal to $(i,j)$ followed by a minimal and then a maximal path; and if we choose $m$ so that $m+l+1= C(n,j)$, then $T^my=T^{ C(n+1,j)}\gamma$ and $\gamma$ agree on the first $n$ edges.

The final case is (min, min, RL). Then $Tz$ is minimal to $(i-1,j+1)$ and extends from there by a maximal edge followed by a minimal edge. Then $u=T^{ C(n,j+1)-1}Tz$ is maximal to $(i-1,j+1)$, from where it extends with a maximal and then a minimal edge. Thus $y=Tu$ is minimal to $(i,j)$ and extends by the minimal edge $e$ followed by a maximal edge; and, if we choose $m$ so that $m+l+1= C(n,j)$, then $T^my=T^{m+l+1+ C(n,j+1)}=T^{ C(n+1,j+1)}\gamma$ and $\gamma$ agree on their first $n$ edges.

The final statement in the Lemma follows from the fact that $T^i (T^{r_n}\gamma)$ and $T^i \gamma$ agree on their first $n\geq k$ edges for $-m \leq i \leq l$.
\end{proof}

\begin{thm}\label{thm:weakmixing}
For each ordering and each $k \geq 1$, the subshift $(\Sigma_k,\sigma)$ is topologically weakly mixing.
\end{thm}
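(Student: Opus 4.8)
The plan is to verify the Keynes--Robertson criterion \cite{KeynesRobertson1969} quoted above, so it suffices to show that whenever $f \in \cb(\Sigma_k)$ and $\lambda \in \C$ satisfy $f \circ \sigma = \lambda f$ everywhere, then $\lambda = 1$; the system is closed ergodic because the $\mu_\alpha$, $0<\alpha<1$, are fully supported and ergodic, so the second Keynes--Robertson theorem will then finish the proof. I would first dispose of the trivial case $f \equiv 0$ and otherwise note that iterating the eigenvalue equation forces $|\lambda| = 1$, so that $|f|$ is $\sigma$-invariant; by ergodicity of each $\mu_\alpha$ it is a.e. equal to a positive constant, and since the continuity points of $f$ are residual I would fix a path $\gamma^*$ whose forward and backward orbits are both dense, which by \cite[Lemma 4.18]{MelaPetersen2005} is a continuity point of $f$, and at which $f(\omega_k(\gamma^*)) \neq 0$.

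The next step is to manufacture return times from Lemma~\ref{lem:kink}. I would take $\gamma^*$ to be generic for a chosen measure $\mu_\alpha$. Genericity guarantees that at a set of levels $n$ of positive density the path simultaneously sits deep inside its level-$n$ column --- that is, the quantities $m$ and $l$ of Lemma~\ref{lem:kink} both tend to infinity --- and continues minimally into the next diagonal, meeting the hypothesis of the Lemma, while its $y$-coordinate satisfies $j_n/n \to \alpha$. Applying Lemma~\ref{lem:kink} at these levels gives $d(\sigma^{r_n}\omega_k(\gamma^*), \omega_k(\gamma^*)) \le 1/2^{\min\{m,l\}} \to 0$, where each $r_n$ is one of the four binomial expressions of \eqref{eq:returntime} with second index $j_n \approx \alpha n$. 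Continuity of $f$ at $\omega_k(\gamma^*)$ together with $f \circ \sigma^{r_n} = \lambda^{r_n} f$ and $f(\omega_k(\gamma^*)) \neq 0$ then yields $\lambda^{r_n} \to 1$.

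It remains to deduce $\lambda = 1$ from $\lambda^{r_n} \to 1$, and this is the heart of the matter. Writing $\lambda = e^{2\pi i \theta}$, the relation says $\{r_n \theta\} \to 0$ along a family of binomial coefficients $C(n,j_n)$, $C(n+1,j_n+1)$, $C(n+1,j_n)$, and $C(n+1,j_n)+C(n,j_n+1)$ with $j_n \approx \alpha n$, for every $\alpha \in (0,1)$. The irrational case cannot be handled by Weyl's polynomial equidistribution theorem alone, both because we control only subsequences of levels and because we must also exclude roots of unity; this is exactly why I would invoke Hahn's theorem \cite{Hahn1965} on the distribution of such arithmetically defined sequences, playing off the freedom to vary $\alpha$ and the four shapes of $r_n$ against one another to force $\theta \in \Z$. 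The main obstacle is precisely this number-theoretic step: in the classical left-to-right ordering the return times form a single clean family of binomial coefficients, whereas here the arbitrary ordering $\xi$ produces the mixed family in \eqref{eq:returntime}, and one must check that this family remains rich enough --- uniformly over all orderings --- that no $\lambda \neq 1$ can satisfy $\lambda^{r_n} \to 1$.

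Finally, once $\lambda = 1$ is established the eigenvalue equation reads $f \circ \sigma = f$, so $f$ is constant on a dense set by the remaining theorem of \cite{KeynesRobertson1969}, proving topological weak mixing for each $k \geq 3$; the cases $k = 1, 2$ follow because $(\Sigma_k,\sigma)$ is a topological factor of $(\Sigma_3,\sigma)$ and weak mixing passes to factors, though in fact the whole argument is uniform in $k$ since Lemma~\ref{lem:kink} requires only $n \geq k$.
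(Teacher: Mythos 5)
Your reduction via the Keynes--Robertson criterion and your use of Lemma \ref{lem:kink} to produce return times $r_n$ with $\lambda^{r_n}\to 1$ do match the paper's strategy, but your proof stops exactly where the real work begins: you yourself label the deduction of $\lambda=1$ from $\lambda^{r_n}\to 1$ as ``the main obstacle'' and leave it as something that ``must be checked.'' That step is the content of the theorem, and the framing you choose makes it harder, not easier. By fixing a single $\mu_\alpha$-generic path $\gamma^*$ in advance, you commit to controlling $C(n,j_n)\theta \bmod 1$ along vertices $(i_n,j_n)$ that you do not get to choose; neither Weyl's theorem nor Hahn's theorem gives such control along a path prescribed beforehand, and obtaining it would amount to the much harder measure-theoretic eigenvalue problem. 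The paper exploits the topological setting precisely to avoid this: since \emph{every} path with dense two-sided orbit yields a continuity point of $f$ (and the Kink Lemma hypotheses can simply be arranged by modifying the path below level $n$), the path may be constructed adaptively \emph{after} $\lambda$ is given, interleaving segments that force the orbit to be dense with deliberate visits to vertices at which all four quantities in \eqref{eq:returntime} are under control.

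Concretely, the paper's argument splits into two cases requiring different tools, and your sketch conflates them. If $\lambda$ is a root of unity, Hahn's theorem says nothing (it concerns skew products over an \emph{irrational} rotation); instead the paper uses Lemma 5.1 of \cite{AdamsPetersen1998}, $C(q^s-2,k)\equiv_q (-1)^k(k+1)$ for a prime $q$ dividing the order of $\lambda$, to locate rows $n=q^s-2$ and positions $j$ with $q \nmid (j+1)$ and $q \mid (j+2)$, so that none of $C(n,j)$, $C(n+1,j+1)$, $C(n+1,j)$, $C(n+1,j)+C(n,j+1)$ is divisible by $q$; routing the path through infinitely many such vertices keeps $|\lambda^{r_n}-1|$ bounded below. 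If $\lambda=\exp(2\pi i\beta)$ with $\beta$ irrational, Hahn's theorem \cite{Hahn1965} on the minimality of $S(x_1,x_2,\dots)=(x_1+\beta,x_2+x_1,\dots)$ on the infinite torus shows that the vectors $\bigl(C(n,1)\beta,\dots,C(n,n)\beta,\dots\bigr)$ are dense mod $1$, so the path can be steered (holding $j$ fixed, increasing $i$) to vertices where $\bigl(C(n,j-1),C(n,j),C(n,j+1)\bigr)\beta \approx (1/8,1/8,1/8) \bmod 1$, which forces all four return-time quantities to stay a fixed distance from $0$ and $1$. Without this case analysis and the accompanying path construction, your argument establishes only $|\lambda|=1$, not $\lambda=1$, so the gap is genuine.
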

\begin{proof}
Fix an ordering $\xi$.
We deal only with paths in $X$ whose orbit does not contain any paths that are eventually diagonal or eventually consist of only maximal or only minimal edges.
By Proposition \ref{prop:maxpaths} the set of such paths has measure 1 for each ergodic $T$-invariant probability measure on $X$. The coding $\omega_k(\gamma)$ of any such path $\gamma$ has both its forward and backward orbit dense in $\Sigma_k$ and hence, as noted above, is a point of continuity of any eigenfunction $f \in \cb (\Sigma_k)$.

Suppose that $f \in \cb (\Sigma_k), \lambda \in \C$, and $f \circ \sigma = \lambda f$ everywhere on $\Sigma_k$;
we want to show that then $\lambda=1$. Fix a point $\omega_k(\gamma)$ that is the coding of a path $\gamma$ as described above and hence is a point of continuity of $f$.
For each $n$ let $(i_n,j_n)$ denote the vertex through which $\gamma$ passes at level $n$, i.e. the range of the edge $\gamma_n$. For each $n$, the path $\gamma$ can be modified if necessary below level $n$ to a path $\gamma'$ that satisfies the hypotheses of Lemma \ref{lem:kink}, is not eventually diagonal, and does not eventually consist of only maximal or minimal edges.
This determines $r_n,m_n,l_n$ so that $T^{r_n}\gamma', \gamma'$, and $\gamma$ agree on their first $n$ edges, and $d(\sigma^{r_n} \omega_k (\gamma'), \omega_k (\gamma')) \leq 1/2^{\min\{m_n,l_n\}}$. Since $m_n$ and $l_n$ are unbounded this implies that $d(\sigma^{r_n} \omega_k(\gamma'), \omega_k(\gamma')) \to 0$ as $n\to \infty$. Since $f(\sigma ^{r_n} \omega_k(\gamma')) = \lambda ^{r_n} f(\omega_k(\gamma'))$ for all $n$, we have $\lambda^{r_n} \to 1$ as $n \to \infty$. In particular $|\lambda|=1$.

Suppose that $\lambda$ is a root of unity, so that $\lambda=\exp (2 \pi i a/b)$ for two relatively prime integers $a$ and $b$.
Let $q$ be a prime that divides $b$. In the Pascal triangle we can find infinitely many $n$ for which there is a $j$ such that none of $ C(n,j),  C(n+1,j+1)$, $ C(n+1,j)$, $ C(n+1,j)+ C(n,j+1)$ is divisible by $q$. For example, by Lemma 5.1 of \cite{AdamsPetersen1998} $ C(q^s-2,k)\equiv_q (-1)^{k}(k+1)$ for $k=0\dots, q^s-2$.
Therefore, for each $s$ the entire row $ C(q^s-1,j), 0 \leq j \leq q^s-1$, is not divisible by $q$, and in the preceding row there are many entries $ C(q^s-2,tq-1), 0 < t \leq q^{s-2}$, that are not divisible by $q$ as well as many that are, so choosing $n=q^s-2$
and $j$ so that $j+1$ is not divisible by $q$ but $j+2$ is divisible by $q$ will do. (If $j+1$ is not divisible by $q$, then neither is $ C(q^s-2,j)$, while if $j+2$ is divisible by $q$, then so is $ C(q^s-2,j+1)$.) We can arrange that our path $\gamma$ hits such vertices for infinitely many $n$, and then $|\lambda^{r_n}-1| \geq 1/q$ for all of these values of $n$, contradicting that $\lambda^{r_n} \to 1$. Thus there are no roots of unity besides $1$ among the eigenvalues of $\sigma$.

Suppose now that $\lambda$ is not a root of unity, say $\lambda = \exp (2 \pi i \beta)$ for an irrational $\beta \in (0,1)$.
According to the theorem of F. Hahn \cite[Theorem 5]{Hahn1965} the transformation $S(x_1,x_2,x_3,\dots)=(x_1+\beta,x_2+x_1,x_3+x_2,\dots)$ (mod $1$) on the infinite torus is minimal, in fact strictly ergodic, with Haar measure the unique invariant probability measure. Then the orbit of $(0,0,0,\dots)$ is dense, and $S^n(0,0,0,\dots)=( C(n,1)\beta, C(n,2)\beta,\dots, C(n,n)\beta,0,0,\dots) \mod 1$.
We use this result to construct a continuity point $\omega_k(\gamma)$ of $f$ that is the coding of a path $\gamma$ such that, denoting by $(i_n,j_n)$ the vertex through which $\gamma$ passes at level $n$, there are infinitely many $n$ for which all of $ C(n,j_n)\beta \mod 1,  C(n+1,j_n+1) \beta \mod 1,  C(n+1,j_n) \beta \mod 1$, and $[ C(n+1,j)+ C(n,j+1)]\beta \mod 1$ are a fixed distance from $0$ and $1$, again making it impossible that $\lambda^{r_n} \to 1$.
Given an initial segment of $\gamma$ ending at a vertex $(i_0,j_0)$, extend $\gamma$ several steps so as to use both some maximal and some minimal edges as well as some edges that increase the first ($i$) coordinate and the second ($j$) coordinate, arriving at a vertex $(i_1,j_1)$.
Then add edges that increase the $i$ coordinate until we arrive at a vertex $(i,j)$ for which $( C(i+j,j-1), C(i+j,j), C(i+j,j+1))\beta \approx (1/8,1/8,1/8) \mod 1$ (say $\{ C(i+j,j-1) \beta\} - 1/8| < 1/16$, etc., with $\{x\}$ denoting $x \mod 1$).
Then $ C(i+j+1,j)\beta \mod 1$ and $ C(i+j+1,j+1)\beta \mod 1$ are in the interval $(1/8,3/8)$,
while $[ C(i+j+1,j)+ C(i+j,j+1)]\beta \mod 1$ is in the interval $(3/16,9/16)$. Continue, making it impossible that $\lambda^{r_n} \to 1$.
\end{proof}

\section{Condition for topological conjugacy to an odometer}

According to the theorem of Downarowicz and Maass \cite{DownarowiczMaass2008} mentioned above, every minimal adic system with finite topological rank is either expansive or topologically conjugate to an odometer. For systems with a bounded number of vertices per level some sufficient conditions for topological conjugacy to an odometer are known (see the remarks in the introduction). In this section we give a necessary and sufficient condition for a fairly general adic system to be topologically conjugate to an odometer. The condition is then used to determine when, with probability 1, an adic system with a random ordering, or an adic system constructed according to a random process, is topologically conjugate to an odometer.

In this section we deal with an arbitrary Bratteli diagram $B=(\mathscr V,\mathscr E)$. As usual the vertices are arranged into levels, so that $\mathscr V = \cup_{i \geq 0} \mathscr V_i$, $\mathscr E = \cup_{i \geq 1} \mathscr E_i$, and there are directed edges from each level to the next: there are a range map $r$ and a source map $s$ from $\mathscr E$ to $\mathscr V$ such that $r(\mathscr E_i)=\mathscr V_i$ for all $i \geq 1$, $s(\mathscr E_i)=\mathscr V_{i-1}$ for all $i \geq 1$, $s^{-1}v\neq \emptyset$ for all $v\in \mathscr V$, and $r^{-1}v'\neq \emptyset$ for all $v' \in \mathscr V\setminus \mathscr V_0$.
We assume that $\mathscr V_0$ consists of a single vertex, $v_0$, and $|\mathscr V_n|, |\mathscr E_n| < \infty$ for all $n$.

The {\em path space} $X$ is the space of infinite paths $x=x_1x_2\dots$ such that $x_i \in \mathscr E_i$ and $r(x_i)=s(x_{i+1})$ for all $i$. To avoid degenerate cases, we assume that $X$ with the usual metric (in which two infinite paths are close if they agree on long initial segments)
 is homeomorphic to the Cantor set. An {\em ordering} $\xi$ of the diagram is an assignment of a linear order to the set $r^{-1}v$ of edges entering each vertex $v \in \mathscr V$. $X_{\xi, \max}$ consists of the set of paths all of whose edges are maximal in the ordering $\xi$, and $X_{\xi, \min}$ of all the paths all of whose edges are minimal. As with the Pascal diagram, the ordering $\xi$ determines the adic transformation $T_\xi : X \setminus X_{\xi,\max} \to X \setminus X_{\xi, \min}$, which is a homeomorphism.

See \cite{HermanPutnamSkau1992}, \cite{GiordanoPutnamSkau1995}, or \cite{Durand2010} for the definitions of {\em telescoping} and {\em isomorphism} for diagrams and ordered diagrams. {\em Equivalence} of diagrams or ordered diagrams is the equivalence relation generated by isomorphism and telescoping. Two diagrams $B_1$ and $B_2$ are diagram equivalent if and only if there is a diagram $B$ such that telescoping $B$ to odd levels yields a telescoping of one of $B_1, B_2$ and telescoping $B$ to even levels yields a telescoping of the other one; similarly for ordered diagrams. If two ordered diagrams are equivalent, then the topological dynamical systems that they define by means of their adic transformations are topologically conjugate. Recall that a Bratteli diagram is called \emph{simple} if there is a telescoping for which every pair of vertices $v_n\in \mathscr{V}_n$ and $v_(n+1)\in \mathscr{V}_{n+1}$ there is an edge between $v_n$ and $v_{n+1}$.
That the converse holds also, for simple adic systems that are properly ordered (with unique maximal and minimal paths), follows from Theorem 4.4 of \cite{HermanPutnamSkau1992}. (For suppose that simple and properly ordered diagrams $D_1$ and $D_2$ determine adic systems $X_1$ and $X_2$, respectively, that are topologically conjugate under an adic-commuting homeomorphism $h: X_1 \to X_2$. Each $X_i$ has a natural sequence of Kakutani-Rokhlin partitions $\mathscr P_i$ corresponding to its diagram structure. Then $h^{-1} \mathscr P_2$ is a sequence of Kakutani-Rokhlin partitions for $X_1$ that is admissible as in Theorem 4.2 of \cite{HermanPutnamSkau1992}. This sequence determines $D_2$ according to the process in \cite{HermanPutnamSkau1992}, so by Theorem 4.4 of \cite{HermanPutnamSkau1992} $D_1$ and $D_2$ are diagram equivalent.) (Note that in the proof of Theorem 4.4 of \cite{HermanPutnamSkau1992} the tops $Z_n$ and $\tilde{Z}_n$ of the tower diagrams have the property that given $n$ we can find $m$ such that $\tilde{Z}_m$ is contained in $Z_n$, and vice versa.)

An {\em odometer} is any topological dynamical system that is topologically conjugate to the system defined by the adic transformation on the path space of an ordered diagram which has exactly one vertex at every level.

\begin{figure}[h!]
\begin{center}
\begin{tikzpicture}[scale=2.5]
\draw[rounded corners] (0,1) --(1.4,.4) -- (3,0);
\draw[rounded corners] (0,1) -- (1.6,.6) -- (3,0);
\draw (0,1) -- (1,0);
\draw[rounded corners] (2,1) --(2.4,.4) -- (3,0);
\draw[rounded corners] (2,1) -- (2.6,.6) -- (3,0);
\draw (2,1) -- (1,0);
\draw[rounded corners] (4,1) --(3.6,.4) -- (3,0);
\draw[rounded corners] (4,1) -- (3.4,.6) -- (3,0);
\draw (4,1) -- (1,0);
\fill (0,1) circle (1 pt);
\fill (2,1) circle (1 pt);
\fill (4,1) circle (1 pt);
\fill (1,0) circle (1 pt);
\fill (3,0) circle (1 pt);
\node at (.8,.2){2};
\node at (1.2,.2){1};
\node at (1.5,.15){3};
\node at (2.2,.2){2};
\node at (2.5,.2){5};
\node at (2.65,.25){1};
\node at (2.85,.225){4};
\node at (3.15,.25){3};
\node at (3.4,.25){6};
\node[above] at (0,1){$v_1$};
\node[above] at (2,1){$v_2$};
\node[above] at (4,1){$v_3$};
\node[below] at (1,0){$v_2v_1v_3$};
\node[below] at (3,0){$(v_2v_1v_3)^2$};
\end{tikzpicture}
\end{center}
\caption{A uniformly ordered level, where $\omega=v_2v_1v_3$.}\label{UniformlyOrdered}
\end{figure}
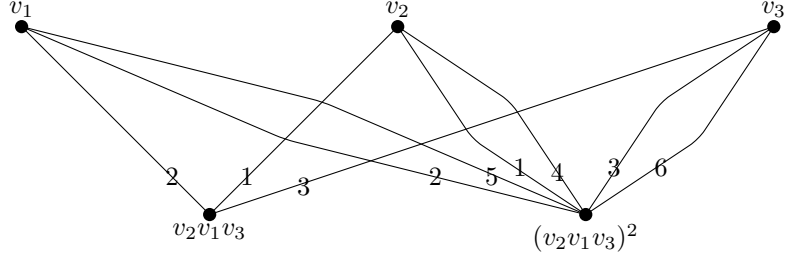

For each $n$ and each $w\in \mathscr V_n$, there is an associated coding in terms of vertices in $\mathscr V_{n-1}$. (This is the ``morphism read on $\mathscr V_n$'' in \cite[p. 342]{Durand2010}.) More precisely, if $r^{-1}w=\{e_1,e_2,\dots,e_{|r^{-1}w|}\}$, where the subscripts correspond directly to the ordering given by $\xi$, then the coding of $w$ by vertices in $\mathscr V_{n-1}$ is given by $c(w)=s(e_1)s(e_2)\dots s(e_{|r^{-1}w}|)$. We call $\mathscr E_{n}$ and the level $n$ \emph{uniformly ordered} if there is a string $v = v_{i_1}v_{i_2}\dots v_{i_k}$, with each $v_{i_m}$ in $V_{n-1}$, such that for every $w$ in $\mathscr V_{n}$, $c(w)=v^{k_w}$ for some $k_w\in \N$. See Figure \ref{UniformlyOrdered}. Note that if any level has only one vertex, then it is automatically uniformly ordered.

For a Bratteli-Vershik system $B=(\mathscr V,\mathscr E)$, define an \emph{ordered shape at level $n$} to consist of $\mathscr V_{n-1}$, $\mathscr V_{n}$, $\mathscr E_{n}$, and a partial order on $\mathscr E_{n}$ which is a total order on each $r^{-1}w, w \in \mathscr V_n$. We say that a shape is \emph{uniformly ordered} if $\mathscr E_{n}$ is uniformly ordered.

\begin{lem}\label{TPUO} If a diagram $B=(\mathscr V,\mathscr E)$ with an ordering $\xi$ has infinitely many levels for which $\mathscr E_n$ is uniformly ordered, then (1) there is a telescoping $B'=(\mathscr V', \mathscr E')$ of $B=(\mathscr V, \mathscr E)$ for which every level is uniformly ordered, and (2) $B=(\mathscr V,\mathscr E)$ is simple and properly ordered.
\end{lem}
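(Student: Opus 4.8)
The plan is to establish part (1) first and then read off part (2) by working inside the telescoped diagram $B'$, since both simplicity and proper ordering are invariants of telescoping and diagram equivalence, and since telescoping induces an order-preserving homeomorphism of path spaces (so maximal and minimal paths correspond). Thus it suffices to produce $B'$ with every level uniformly ordered and to verify simplicity and proper ordering for $B'$ itself.

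For part (1), let $n_1 < n_2 < \cdots$ be the levels at which $\mathscr E_n$ is uniformly ordered and set $n_0 = 0$. I would telescope $B$ to the levels $n_0 < n_1 < n_2 < \cdots$, so that level $j$ of $B'$ is the old level $n_j$ and an edge of $B'$ into $w \in \mathscr V_{n_j}$ is a $B$-path from $\mathscr V_{n_{j-1}}$ to $w$. The crux is that the induced order on these telescoped edges is the adic order in which the most significant edge is the one entering $w$ at the top level $n_j$ (the convention fixed earlier in the paper, where $\gamma < \eta$ is decided at the largest level of disagreement). Hence the telescoped edges into $w$ are sorted first by their top edge $e \in r^{-1}w$, in increasing order, and within the block determined by $e$ by the remaining path into $s(e)$. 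For each $u \in \mathscr V_{n_j-1}$ let $D(u)$ be the word over $\mathscr V_{n_{j-1}}$ listing, in increasing path-order, the sources of all $B$-paths from $\mathscr V_{n_{j-1}}$ to $u$; note that $D(u)$ is independent of $w$. Writing the uniform-order word at level $n_j$ as $v = u_1 u_2 \cdots u_p$, so that $c(w) = v^{k_w}$ in $B$, the computation then gives in $B'$
\[
c_{B'}(w) = \big(D(u_1)D(u_2)\cdots D(u_p)\big)^{k_w} = V^{k_w},
\]
with $V = D(u_1)\cdots D(u_p)$ independent of $w$. So every level of $B'$ is uniformly ordered, using only that the \emph{top} level of each telescoping block was uniformly ordered; the intermediate levels are irrelevant.

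For part (2) I would argue entirely within $B'$. First, $r^{-1}w \neq \emptyset$ forces $k_w \geq 1$, and $s^{-1}u \neq \emptyset$ forces every $u \in \mathscr V'_{j-1}$ to be the source of some edge of $\mathscr E'_j$, hence a letter of the uniform-order word $V^{(j)}$; therefore $V^{(j)}$ contains every vertex of the previous level, consecutive levels of $B'$ are fully connected, and $B'$ itself witnesses that $B$ is simple. Second, because $c_{B'}(w) = (V^{(j)})^{k_w}$ for every $w$ at level $j$, the maximal edge into every such $w$ has source the last letter $M_j$ of $V^{(j)}$, and the minimal edge into every such $w$ has source the first letter $m_j$. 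Consequently any maximal path must pass through $M_{j+1}$ at level $j$ for every $j$, so its vertex sequence is forced, and the maximal edge into each $M_{j+1}$ is unique, so its edges are forced as well; this gives a unique maximal path, and symmetrically a unique minimal path. Thus $B'$, and hence $B$, is properly ordered.

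The step I expect to be the main obstacle is the order bookkeeping in part (1): verifying that the adic order on telescoped paths refines exactly as \emph{group by top edge, then by the rest}, and that this yields the clean factorization $c_{B'}(w) = V^{k_w}$ rather than a scrambled word. Getting the convention straight (most significant edge at the deepest level) is precisely what makes the possibly non-uniform intermediate levels drop out of the conclusion, and it is where an orientation error would silently break the argument.
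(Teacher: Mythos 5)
Your proposal is correct and takes essentially the same approach as the paper: telescope across the uniformly ordered levels, observe that the induced coding of each vertex is obtained by substituting a fixed word (your $D(u)$, the paper's decomposition of $v$ into level-$j_1$ vertices) for each letter of the uniform word so that the power structure $c(w)=v^{k_w}$ persists, then get simplicity because every vertex of the previous level must appear in the uniform word, and proper ordering because all maximal (resp.\ minimal) edges at a uniformly ordered level share a source and maximal incoming edges are unique. The only cosmetic differences are that the paper's telescoping claim allows the uniformly ordered level to lie anywhere inside the telescoping window rather than at its top, and that it runs the unique-maximal-path argument directly instead of transferring proper ordering back from $B'$ via order-preserving telescoping, but these are the same ideas.
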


\begin{proof} First note that if an ordered Bratteli diagram is telescoped from level $j_1$ to level $j_2$, the new coding of the vertices at level $j_2$ in terms of the vertices at level $j_1$ results from decomposing the original $j_2-1$ coding of vertices in $\mathscr V_{j_2}$ by vertices in $\mathscr V_{j_2-1}$ in an inductive manner into a level $j_1$ coding. Then note that if $j_1<n\leq j_2$ and $\mathscr E_n$ is uniformly ordered, then the new set of edges arising from telescoping from level $j_1$ to level $j_2$ will be uniformly ordered. Indeed, since all vertices $w\in \mathscr V_n$ have codings $c(w)=v^{k_w}$ in terms of the vertices at level $n-1$, decomposing the level $j_2-1$ codings of vertices in $\mathscr V_{j_2}$ into level $n-1$ codings will lead to repeated concatenations of $v$. Furthermore, the ordering $\xi$ will lead to a unique decomposition of $v$ in terms of vertices at level $j_1$, and therefore the new set of edges will be uniformly ordered. See Figure \ref{everylevelUO}.

In order for a level to be uniformly ordered, every vertex in $\mathscr{V}_n$ must connect to every vertex in $\mathscr{V}_{n+1}$. Hence any diagram with infinitely many uniformly ordered levels is simple.

For every $n$ there is a unique vertex $v_{n-1}$ in $\mathscr{V}_{n-1}$ such that every edge in $\mathscr{E}_n$ which is maximal has $v_{n-1}$ as its source. Therefore any path in $X_{\max}$ must go through these vertices, and there is a unique path which travels along maximal edges at every level.
\end{proof}

\begin{figure}[h!]
\begin{center}
\begin{tikzpicture}[scale=2.5]
\draw[rounded corners] (0,1) --(1.4,.4) -- (3,0);
\draw[rounded corners] (0,1) -- (1.6,.6) -- (3,0);
\draw (0,1) -- (1,0);
\draw[rounded corners] (2,1) --(2.4,.4) -- (3,0);
\draw[rounded corners] (2,1) -- (2.6,.6) -- (3,0);
\draw (2,1) -- (1,0);
\draw[rounded corners] (4,1) --(3.6,.4) -- (3,0);
\draw[rounded corners] (4,1) -- (3.4,.6) -- (3,0);
\draw (4,1) -- (1,0);
\draw (1,2) -- (0,1);
\draw (1,2) -- (2,1);
\draw (3,2) -- (2,1);
\draw (3,2) -- (4,1);
\fill (1,2) circle (1 pt);
\fill (3,2) circle (1 pt);
\fill (0,1) circle (1 pt);
\fill (2,1) circle (1 pt);
\fill (4,1) circle (1 pt);
\fill (1,0) circle (1 pt);
\fill (3,0) circle (1 pt);
\node at (.8,.2){2};
\node at (1.2,.2){1};
\node at (1.5,.15){3};
\node at (2.2,.2){2};
\node at (2.5,.2){5};
\node at (2.65,.25){1};
\node at (2.85,.225){4};
\node at (3.15,.25){3};
\node at (3.4,.25){6};
\node[above] at (1,2){$a$};
\node[above] at (3,2){$b$};
\node[above left] at (0,1){$a$};
\node[above right] at (2,1){$ab$};
\node[above right] at (4,1){$b$};
\node[below] at (1,0){$abab$};
\node[below] at (3,0){$(abab)^2$};
\end{tikzpicture}
\end{center}
\caption{Following the code shows that it is possible to telescope so that every level is uniformly ordered.}\label{everylevelUO}
\end{figure}
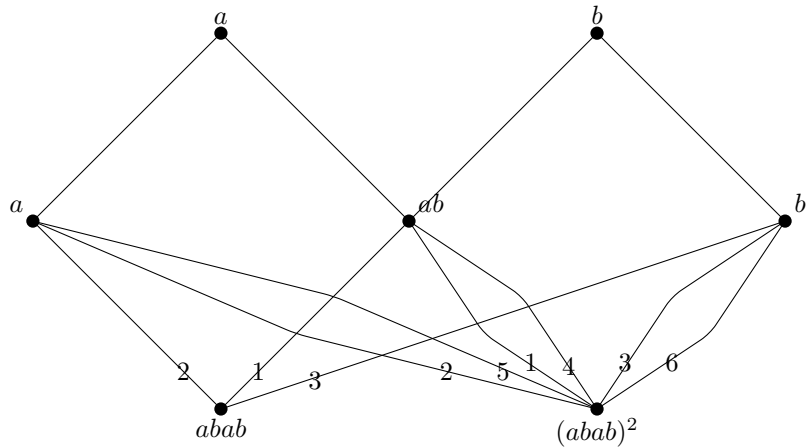

\begin{rem} It is possible for two non-uniformly ordered levels to telescope to one that is uniformly ordered. Indeed, suppose that $|\mathscr V_{n}|\geq 3$ and the level $n-1$ coding of vertices at level $n$ is such that $c(w_2)=c(w_3)\neq c(w_1)$ for $w_1,w_2,w_3$ in $\mathscr V_{n}$. Then suppose that the level $n$ coding of vertices in $\mathscr V_{n+1}$ is such that the only difference in the codings of different vertices is that $w_2$ and $w_3$ are interchanged in two codings. Then neither $\mathscr E_n$ nor $\mathscr E_{n+1}$ is uniformly ordered. However, once one telescopes between levels $n-1$ and $n+1$, the new set of edges will be uniformly ordered. See Figure \ref{NUO}\end{rem}.

 \begin{figure}[h!]
\begin{center}
\begin{tikzpicture}[scale=1]
\draw (0,2) -- (0,1);
\draw (2,2) -- (0,1) -- (1,0);
\draw (0,1) -- (3,0);
\draw (4,2) -- (2,1)-- (1,0);
\draw (4,2) -- (4,1);
\draw (2,2) -- (2,1);
\draw (2,2) -- (4,1) -- (3,0);
\draw (0,1) -- (1,0);
\fill (0,2) circle (2 pt);
\fill (2,2) circle (2 pt);
\fill (4,2) circle (2 pt);
\fill (0,1) circle (2 pt);
\fill (2,1) circle (2 pt);
\fill (4,1) circle (2 pt);
\fill (1,0) circle (2 pt);
\fill (3,0) circle (2 pt);
\node[above] at (0,2){$v_1$};
\node[above] at (2,2){$v_2$};
\node[above] at (4,2){$v_3$};
\node[left] at (0,1){$w_1=v_1v_2$};
\node[below] at (2,1){$w_2=v_2v_3$};
\node[right] at (4,1){$w_3=v_2v_3$};
\node[below left] at (1,0){$w_1w_2=v_1v_2v_2v_3$};
\node[below right] at (3,0){$w_1w_3=v_1v_2v_2v_3$};
\end{tikzpicture}
\end{center}
\caption{All orderings are left to right. Neither level is uniformly ordered, but the telescoping is.}\label{NUO}
\end{figure}
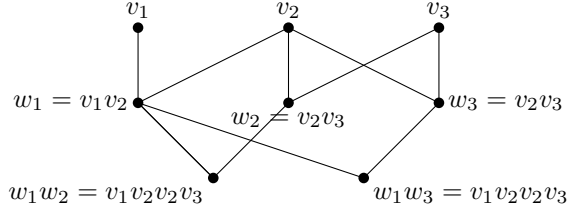

\begin{thm}\label{UOimpliesOdometer}
 If an ordered Bratteli diagram $B=(\mathscr V,\mathscr E,\xi)$ {has a telescoping for which there are} infinitely many uniformly ordered levels then the associated adic system is topologically conjugate to an odometer. Conversely, any simple properly ordered Bratteli diagram $B=(\mathscr V,\mathscr E, \xi)$ that is topologically conjugate to an odometer has a telescoping for which there are infinitely many uniformly ordered levels.
\end{thm}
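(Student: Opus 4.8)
The plan is to prove both implications through the theory of ordered diagram equivalence developed above, using the common-refinement characterization of equivalence together with the fact that equivalence of ordered diagrams implies topological conjugacy of the associated adic systems. The central observation is that a single-vertex level always behaves like a uniformly ordered level, and that inserting single vertices lets me factor a uniformly ordered level into a ``word'' part and a ``repetition'' part.

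For the forward implication I would first invoke Lemma \ref{TPUO} to replace $B$ by a telescoping $B'=(\mathscr V',\mathscr E',\xi)$ in which every level is uniformly ordered (and which is simple and properly ordered). Fix, for each $n$, the word $v^{(n)}=v_{i_1}\cdots v_{i_{k_n}}$ over $\mathscr V'_{n-1}$ with $c(w)=(v^{(n)})^{k_w}$ for every $w\in\mathscr V'_n$. I would then build a common refinement $C$ by inserting a new single vertex $o_n$ strictly between levels $n-1$ and $n$ of $B'$: the edges from $\mathscr V'_{n-1}$ into $o_n$ are ordered so as to spell $v^{(n)}$, that is $c(o_n)=v^{(n)}$, and for each $w\in\mathscr V'_n$ there are $k_w$ edges from $o_n$ to $w$, ordered $1<\cdots<k_w$, so that $c(w)=o_n^{\,k_w}$. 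Indexing $C$ so that its level $2n$ is $\mathscr V'_n$ and its level $2n-1$ is $\{o_n\}$, telescoping $C$ to its even levels reproduces $B'$ (the induced order on the concatenated edges into $w$ lists $v^{(n)}$ once for each of the $k_w$ edges $o_n\to w$, giving $(v^{(n)})^{k_w}$), while telescoping $C$ to its odd levels replaces each symbol of $v^{(n+1)}$ by a power of $o_n$ and yields the one-vertex-per-level diagram $O$ carrying $Q_n=\sum_m k_{v_{i_m}}$ edges at level $n$. Thus $C$ exhibits $B'$ and the odometer $O$ as parity-telescopings of a common diagram, so $B'$ and $O$ are equivalent ordered diagrams and hence define topologically conjugate systems; since $B'$ is a telescoping of $B$, the system of $B$ is topologically conjugate to the odometer $O$.

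For the converse, suppose $B=(\mathscr V,\mathscr E,\xi)$ is simple and properly ordered and topologically conjugate to an odometer, i.e. to the adic system of some one-vertex-per-level ordered diagram $O$. Both $B$ and $O$ are simple and properly ordered, so by Theorem 4.4 of \cite{HermanPutnamSkau1992} (as discussed above) they are diagram equivalent; let $C$ be a common refinement, indexed so that telescoping to one parity recovers a telescoping of $O$ and to the other a telescoping of $B$. Every telescoping of $O$ still has a single vertex at each level, so the levels of $C$ of the corresponding parity are single-vertex levels. The telescoping of $B$ obtained from the other parity absorbs one such single-vertex level into each of its levels; since all paths of that portion pass through the single intermediate vertex $u$, the resulting coding of any vertex $w$ is $c(u)^{\,d_w}$, where $d_w$ is the number of edges from $u$ to $w$. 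This is exactly the uniformly ordered pattern with word $v=c(u)$, so each such level is uniformly ordered, and there are infinitely many of them. Hence $B$ has a telescoping with infinitely many uniformly ordered levels.

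The main obstacle is bookkeeping with the orderings rather than any single hard idea: I must check that the adic (lexicographic) order induced on the concatenated edges of a telescoped level reproduces exactly the word-then-repetition pattern $(v^{(n)})^{k_w}$ in the forward direction, and dually that absorbing a single intermediate vertex forces the coding of every vertex to be a common power $c(u)^{\,d_w}$ in the converse; one must also verify that the inserted-vertex diagram $C$ is a legitimate properly ordered Bratteli diagram whose two parity-telescopings are genuinely (isomorphic to) telescopings of $B'$ and $O$.
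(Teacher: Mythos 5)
Your proposal is correct and follows essentially the same route as the paper: your common refinement $C$, with single vertices $o_n$ interleaved so that $c(o_n)=v^{(n)}$ and $c(w)=o_n^{\,k_w}$, is exactly the paper's intermediate diagram $B'$ (with parities swapped), built after the same appeal to Lemma \ref{TPUO}. Your converse likewise matches the paper's: equivalence to the odometer via Theorem 4.4 of \cite{HermanPutnamSkau1992}, followed by the observation that telescoping through a single-vertex level $u$ forces every coding to be a power $c(u)^{d_w}$, i.e.\ a uniformly ordered level.
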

\begin{proof}
Suppose that we have a diagram $B=(\mathscr V, \mathscr E, \xi)$ {for which there is a telescoping} with infinitely many uniformly ordered levels.
By Lemma \ref{TPUO} we may assume that the diagram is uniformly ordered at every level $n>1$; also note that the diagram is simple and properly ordered. One can see then that all the $k$-codings (coding $T_\xi$-orbits according to the initial path segments of length $k$) are periodic, so that the system is not expansive and hence, by \cite{DownarowiczMaass2008}, is topologically conjugate to an odometer---in case the system has finite topological rank. We give here an explicit proof in the general case by constructing an intermediate ordered Bratteli diagram, $B'$, with the property that telescoping to odd levels yields the given diagram, $B$, and telescoping to even levels yields a system with a single vertex at every level. Thus $B$ is equivalent to the diagram of an odometer, and hence its associated topological dynamical system is topologically conjugate to an odometer.

    To construct $B'$, for each $n \geq 1$ let $|{\mathscr V'_{2n-1}}|= |{\mathscr V_{n}}|$ and $|{\mathscr V'_{2n}}|=1$. Let $\mathscr E'_1=\mathscr E_1$. For every $n \geq 1$, denote the single vertex in $\mathscr V'_{2n}$ by $w'_{2n}$ and set up a one-to-one correspondence between $\mathscr V'_{2n-1}$ and $\mathscr V_n$. Fix $n \geq 1$. {Since $B$ is uniformly ordered at level $n+1$, there is a string $v=v_1 \dots v_{|v|}$ with entries from $\mathscr V_{n}$ such that for each $w$ in $\mathscr V_{n+1}$ the coding of $w$ by vertices in $\mathscr V_{n}$ is $c(w)=v^{k_w}$.}
    To define $\mathscr{E}'_{2n}$ let $|r^{-1}w'_{2n}|=|v|$,
      label the edges $e_i'$ in $r^{-1}w'_{2n}$ so that for each $i$ $s(e_i')$ is the vertex $u'$ in $\mathscr V'_{2n-1}$ which corresponds to $v_i$, and order these edges according to their labels.
       To define $\mathscr{E}'_{2n+1}$, note that each $u'\in \mathscr V'_{2n+1}$ corresponds to a $w\in \mathscr V_{n+1}$; we let the number of edges from $w'_{2n}$ to $u'$ be $k_w$ for the $w$ corresponding to $u'$ and order these edges arbitrarily.
       Then telescoping $B'$ to odd levels yields $B$, and telescoping to even levels yields a system with just one vertex at every level, an odometer. See Figure \ref{ID}.

Conversely, suppose that $B=(\mathscr V, \mathscr E, \xi)$ is a simple properly ordered diagram whose associated adic system is topologically conjugate to an odometer. As mentioned above, then the diagram is equivalent to an odometer:
 there is an intermediate diagram $B'=(\mathscr V',\mathscr E',\xi')$, which when telescoped to odd levels yields a telescoping $B''$ of $B$ and when telescoped to even levels yields a system with just one vertex at each level, or vice versa.
  For each $n$, $|\mathscr V'_{2n}|=1$ and hence level $2n$ of $B'$ is automatically uniformly ordered. Then as in the proof of Lemma \ref{TPUO}, the telescoping of $B'$ from level $2n-1$ to level $2n+1$ will yield a set of edges whose ranges form a uniformly ordered level. Therefore in $B''$ every level $\mathscr E_n$ must be uniformly ordered.
\end{proof}

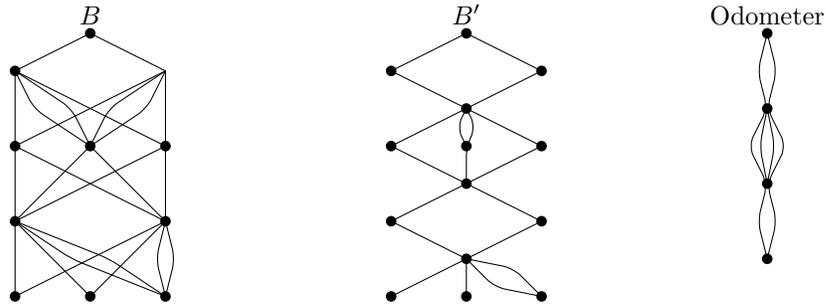
\begin{figure}[h!]
\begin{center}
\begin{tikzpicture}[scale=.5]
\fill (0,0) circle (4 pt);
\fill (0,2) circle (4 pt);
\fill (0,4) circle (4 pt);
\fill (0,6) circle (4 pt);
\fill (2,0) circle (4 pt);
\fill (2,4) circle (4 pt);
\fill (2,7) circle (4 pt);
\fill (4,0) circle (4 pt);
\fill (4,2) circle (4 pt);
\fill (4,4) circle (4 pt);
\fill (10,0) circle (4 pt);
\fill (10,2) circle (4 pt);
\fill (10,4) circle (4 pt);
\fill (10,6) circle (4 pt);
\fill (12,0) circle (4 pt);
\fill (12,1) circle (4 pt);
\fill (12,3) circle (4 pt);
\fill (12,4) circle (4 pt);
\fill (12,5) circle (4 pt);
\fill (12,7) circle (4 pt);
\fill (14,0) circle (4 pt);
\fill (14,2) circle (4 pt);
\fill (14,4) circle (4 pt);
\fill (14,6) circle (4 pt);
\fill (20,1) circle (4 pt);
\fill (20,3) circle (4 pt);
\fill (20,5) circle (4 pt);
\fill (20,7) circle (4 pt);
\draw(2,7)--(0,6)--(0,4)--(0,2)--(0,0);
\draw(2,7)--(4,6)--(4,4)--(4,2);
\draw(0,6)--(4,4)--(0,2)--(2,0);
\draw(4,6)--(0,4)--(4,2)--(0,0);
\draw[rounded corners] (0,6) -- (.5,5) -- (2,4);
\draw[rounded corners] (0,6) -- (1.5,5) -- (2,4);
\draw[rounded corners] (4,6) -- (2.5,5) -- (2,4);
\draw[rounded corners] (4,6) -- (3.5,5) -- (2,4);
\draw(2,4)--(0,2);
\draw(2,4)--(4,2)--(2,0);
\draw[rounded corners] (0,2) -- (2.5,1) -- (4,0);
\draw[rounded corners] (0,2) -- (1.5,1) -- (4,0);
\draw[rounded corners] (4,2) -- (4.25,1) -- (4,0);
\draw[rounded corners] (4,2) -- (3.75,1) -- (4,0);
\draw(12,7)--(10,6)--(12,5)--(10,4)--(12,3)--(10,2)--(12,1)--(10,0);
\draw(12,7)--(14,6)--(12,5)--(14,4)--(12,3)--(14,2)--(12,1)--(12,0);
\draw[rounded corners](12,5)--(11.75,4.5)--(12,4);
\draw[rounded corners](12,5)--(12.25,4.5)--(12,4)--(12,3);
\draw[rounded corners](12,1)--(12.75,.25)--(14,0);
\draw[rounded corners](12,1)--(13.25,.75)--(14,0);
\draw[rounded corners](20,7)--(19.75,6)--(20,5)--(19.5,4)--(20,3)--(19.75,2)--(20,1);
\draw[rounded corners](20,7)--(20.25,6)--(20,5)--(20.5,4)--(20,3)--(20.25,2)--(20,1);
\draw[rounded corners](20,5)--(20.2,4)--(20,3);
\draw[rounded corners](20,5)--(19.8,4)--(20,3);
\node[above] at(2,7){$B$};
\node[above] at (12,7){$B'$};
\node[above] at(20,7){Odometer};
\end{tikzpicture}
\end{center}
\caption{$B'$ telescopes to $B$ on odd levels, and to the odometer on even levels. Give $B$ any uniform ordering and the diagrams are order equivalent.}
\label{ID}
\end{figure}

We are seeking conditions that will allow one to determine when BV systems are expansive or not, conjugate to odometers or not, and of finite topological rank or not. As mentioned above, Downarowicz and Maass \cite{DownarowiczMaass2008} (see also \cite{Hoynes2014}) showed that every properly ordered minimal BV system of finite {topological} rank is either expansive or else conjugate to an odometer. Gjerde and Johansen \cite{GjerdeJohansen2000} give an example of a simple properly ordered BV system that is not expansive, yet not conjugate to an odometer, thus necessarily of infinite {topological} rank. We note in the following Proposition that the key properties of this example are enjoyed by a wide class of systems, which we call ``standard nonexpansive'', and which therefore are nonexpansive and, by essentially the same argument as in \cite{GjerdeJohansen2000}, not of finite {topological} rank, in particular not conjugate to odometers.

Recall that given $N \in \mathbb{N}$, every path in an ordered BV system $B=(\mathscr V, \mathscr E, \xi)$ has a $N$-coding of its orbit according to the partition $\mathscr P_N$ into cylinder sets to level $N$.
For each vertex $v\in \mathscr{V}$, the edges with range $v$ are assigned labels indicating their positions in the ordering determined by $\xi$. So any path $\gamma$ has an infinite \emph{sequence of edge labels} associated with it. Specifically, the $i$th entry in the sequence is the label assigned to the $i$th edge of $\gamma$.  We say that the BV system is \emph{standard nonexpansive} if for every natural number $N$ there exist distinct paths $\gamma _N$ and $\gamma '_N$ that are assigned the same sequence of edge labels, agree {from the root to} some level $K_N > N$, and at any level the vertices met by the two paths have the same basic block in the {$N$-coding}. In this case, $\gamma _N$ and $\gamma '_N$ have the same $N$-coding. Standard nonexpansive implies expansive.

\begin{prop}\label{SNE}
Every standard nonexpansive simple properly ordered BV system has infinite rank.
\end{prop}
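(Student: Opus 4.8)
The plan is to argue by contradiction: I assume the system has finite topological rank and contradict the defining property of standard nonexpansiveness, using the Downarowicz--Maass dichotomy together with Theorem \ref{UOimpliesOdometer}. Two preliminary observations are immediate. First, since the diagram is simple and properly ordered, the adic system is minimal (by the standard correspondence between simple properly ordered diagrams and minimal Cantor systems, e.g. \cite{HermanPutnamSkau1992}). Second, the system is not expansive: by the definition of standard nonexpansiveness the witnesses $\gamma_N\neq\gamma'_N$ have the same $N$-coding for every $N$, so no $N$-coding is faithful, and by the expansiveness criterion recorded earlier (expansive iff some $k$-coding is faithful) the system is nonexpansive.

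Now suppose, for contradiction, that the system has finite topological rank. Being minimal of finite topological rank, it is by \cite{DownarowiczMaass2008} either expansive or topologically conjugate to an odometer; since it is nonexpansive, it must be conjugate to an odometer. Because the diagram is moreover simple and properly ordered, the converse half of Theorem \ref{UOimpliesOdometer} yields a telescoping of $B$ with infinitely many uniformly ordered levels, and by Lemma \ref{TPUO} I may pass to a telescoping $B''$ of $B$ that is uniformly ordered at every level.

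The structural fact I would then exploit is that \emph{in a uniformly ordered diagram the sequence of edge labels determines the path}. Indeed, at a uniformly ordered level with periodic word $v=v_{i_1}\dots v_{i_{|v|}}$, the source of the $j$-th edge entering a vertex $w$ is the letter of $v$ in position $((j-1)\bmod |v|)+1$, independently of $w$; thus an edge label and a level determine the source vertex, so the entire vertex sequence of a path is recovered from its label sequence, and then each edge is recovered from its range vertex together with its label. Hence distinct paths of $B''$ have distinct edge-label sequences.

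It remains to feed the witnesses into this conclusion. The paths $\gamma_N,\gamma'_N$ are distinct in $B$, hence distinct in $B''$, so their $B''$-edge-label sequences must differ; I would contradict this by showing that their common $B$-edge-label sequence forces a common $B''$-edge-label sequence. This transfer is the main obstacle, because the label of a composite (telescoped) edge is the adic rank of the corresponding finite $B$-path among all $B$-paths into its range vertex, and such a rank can a priori depend on the in-degrees of the intermediate vertices rather than on the internal labels alone. This is exactly where the two remaining hypotheses enter: the condition that the two paths agree from the root down through level $K_N>N$ makes the corresponding $B''$-paths literally coincide on every telescoping interval lying within the first $K_N$ levels (so equality of composite labels there is automatic, and the first level at which they could differ tends to infinity with $N$); and the condition that the vertices met at each level carry the same basic block in the $N$-coding is what must be used to identify the incoming adic-rank structure at the two range vertices on the deeper intervals and thereby force equality of composite labels there as well. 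Carrying out this matching---verifying that ``same $N$-basic block'' survives telescoping as ``same composite rank''---is the technical heart of the proof. Once it is established, the two distinct paths of $B''$ have identical edge-label sequences, contradicting the faithfulness of the edge-label coding on a uniformly ordered diagram; therefore the system cannot have finite topological rank.
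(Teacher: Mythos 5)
Your overall route is the same as the paper's: minimality plus nonexpansiveness, the Downarowicz--Maass dichotomy, the converse half of Theorem \ref{UOimpliesOdometer} together with Lemma \ref{TPUO} to produce a telescoping $B''$ of $B$ that is uniformly ordered at every level, and then the rigidity of uniformly ordered diagrams (label sequences determine paths --- which you prove correctly) to rule this out. But the proposal stops exactly where the content of the proposition lies. You need the witnesses $\gamma_N,\gamma'_N$, which by hypothesis have equal label sequences \emph{in $B$}, to have equal \emph{composite} label sequences \emph{in $B''$}; you call this transfer ``the technical heart,'' observe that agreement down to level $K_N$ settles only the telescoping intervals inside $[0,K_N]$, gesture at how ``same basic block'' should control the deeper intervals, and then draw the contradiction conditionally (``once it is established\dots''). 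Nothing in the proposal actually proves the transfer, so what you have shown is only: \emph{if} equal $B$-labels together with the other witness properties force equal $B''$-labels, \emph{then} the system has infinite rank. That unproved conditional is a genuine gap, and not a cosmetic one, since (as you yourself point out) a composite label is an adic rank whose value involves counts of paths through intermediate vertices, not just the interior labels.

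For fairness of comparison: the paper's own proof is even terser than yours --- it cites \cite{HermanPutnamSkau1992} for the uniformly ordered telescoping and then dismisses the incompatibility in a single sentence (``this, however, cannot occur when there are paths with the same sequence of edge labels''), finishing with \cite{DownarowiczMaass2008} exactly as you do. So you have correctly isolated the one assertion carrying all the weight; you simply have not discharged it, and the paper does not spell it out either. If you want to close the gap, here is a reduction that bypasses the in-degree bookkeeping that worried you. At a uniformly ordered level of $B''$ with common word $v$, the height (adic rank from the root) of a finite path at its terminal vertex satisfies $R_{n_{i+1}}=R_{n_i}+\phi_i(m)$, where $m$ is the composite label on $[n_i,n_{i+1}]$ and $\phi_i(m)$ is the sum of the dimensions (numbers of paths from the root) of the first $m-1$ letters of $v^\infty$; crucially, $\phi_i$ is strictly increasing and is the \emph{same} function for every vertex at that level. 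Hence, by induction on $i$, the two paths have equal composite labels at every level if and only if they have equal heights $R_{n_i}$ at every telescoped level. Equality of heights is therefore the precise statement that the hypotheses ``equal edge labels,'' ``agreement down to level $K_N$,'' and ``same basic blocks at every level'' must be shown to deliver (it is also in effect what the definition is using when it asserts that the witnesses share the same $N$-coding); proving that implication is the missing step.
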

\begin{proof}
By \cite{HermanPutnamSkau1992}, for any BV system $B$ that is conjugate to the odometer there is a BV system $B'$ that telescopes on odd levels to a telescoping of $B$ and on even levels to a telescoping of the odometer. There is then a telescoping of $B$ in which every vertex is uniformly ordered. This, however, cannot occur when there are paths with the same sequence of edge labels. So if $B$ is standard nonexpansive, then it not conjugate to the odometer. It then follows, by \cite{DownarowiczMaass2008}, that $B$ does not have finite {topological} rank.
\end{proof}

We consider now what might happen if a diagram is ordered randomly, or if a diagram is somehow chosen at random.

Suppose that we are given a simple diagram $B=(\mathscr V, \mathscr E)$ as above. Denote the set of orderings of the edges entering a vertex $v \in \mathscr V$ by $\mathscr O_v$. The set of all orderings of $B$ is $\mathscr O= \prod_{v \in \mathscr V} \mathscr O_v$. We consider probability measures $\mu$ on $\mathscr O$, for example the uniform and independent measure $\mu_0$ which is the product over all $v$ of the uniform measures on the discrete spaces $\mathscr O_v$. For each $n=1,2,\dots$ let $U_n$ denote the set of orderings of $B$ which are uniformly ordered at level $n$, and let $U$ denote the set of orderings which have infinitely many uniformly ordered levels. By the preceding theorem, if $\mu (U)=1$ then an ordering selected randomly according to $\mu$ will with probability 1 produce an adic system topologically conjugate to an odometer.

\begin{ex}\label{ex:odometerprob}
 Suppose that $B$ is a diagram such that for every $n\geq 1$ there is an $r_n$ such that the number of edges between every pair of vertices $v_{n-1}\in \mathscr{V}_{n-1}$ and $v_{n}\in\mathscr{V}_{n}$ is $r_n$. (Then $B$ is simple.) Write $|\mathscr V_n|=V_n$. Then there are $r_n!$ orderings of the edges into each vertex at level $n$ and thus $(r_n!)^{V_n}$ ways to order the unordered shape at level $n$. There are $r_n!$ uniform orderings at level $n$, since we must have the same ordering at each vertex on a fixed level. By the Borel-Cantelli Lemma, if
\begin{equation}
\sum_{n=1}^\infty \frac{r_n!}{(r_n!)^{V_n}} = \infty,
\end{equation}
then $\mu_0(U)=1$, and so with probability 1 the system is topologically conjugate to an odometer. For example, if the $r_n$ and $V_n$ are bounded, and $B$ is as described, then selecting orderings uniformly and independently yields
an odometer with probability 1. This is consistent with Corollary 5.3 of \cite{BezuglyiKwiatkowskiYassawi2014}, according to which a diagram with bounded number of vertices and edges per level will have a ``perfect'' ordering (an ordering which admits a Vershik map as a homeomorphism) with probability 1.
\end{ex}

\begin{ex}\label{ex:odometer2}
We can also consider assembling a random ordered diagram from a sequence of shapes, that fit together, generated by a stochastic process.
Generalizing the definition above, define an {\em ordered shape} $S=(C,D,E,\xi)$ to be a bipartite directed graph together with a partial order on its set of edges such that (1) the vertex set is the disjoint union of two finite sets $C$ and $D$; (2) for each $e \in E, s(e) \in C$ and $r(e) \in D$; (3) for each
$b \in D, \xi$ is a total order on $r^{-1}b$; (4) for $b_1,b_2 \in D$ with $b_1 \neq b_2$, the edges in $r^{-1}b_1$ and $r^{-1}b_2$ are not comparable.

Let $\mathscr S$ be the set of sequences of ordered shapes $S_n=( C_(S_n),D(S_n),E(S_n),\xi(S_n)), n=1,2,\dots$, such that $ C(S_{n+1})=D(S_n)$ for all $n$, so that the shapes can ``fit together". If $\mu$ is a probability measure on $\mathscr S$ that assigns full measure to the set of sequences which have infinitely many uniformly ordered shapes, then constructing an adic system at random with respect to $\mu$ produces an odometer with probability 1. For example, if $\mu$ is an ergodic shift-invariant probability measure on $\mathscr S$ such that $\mu\{S=(S_n) \in \mathscr S: S_1 \text{ is uniformly ordered}\}>0$, then by the Poincar\'{e} Recurrence Theorem $S$ will have infinitely many uniformly ordered shapes and hence will describe an odometer.
\end{ex}

\section{The Union Over All Orderings}

 For a subshift $(\Omega,\sigma)$, denote by $\mathscr L (\Omega)$ the {\em language} of $\Omega$, that is, the set of finite blocks seen in sequences in $\Omega$. Define for each ordering $\xi$ of the Pascal graph $P$ the subshift $\Sigma_\xi \subset \{a,b\}^\Z$ to consist of all doubly infinite sequences, each of whose subblocks can be found in the 1-coding of some $\gamma \in X$ (the set of infinite paths on the Pascal graph) for the ordering $\xi$, and define the ``big subshift" $(\Sigma,\sigma)$ by $\mathscr L (\Sigma) = \cup_\xi \mathscr L (\Sigma_\xi)$. Equivalently, we can define the big subshift $\Sigma$ to be the closure of the union over all orderings $\xi$ of $X_\xi$:
\begin{equation}
\Sigma= \overline{\bigcup_\xi X_\xi} .
\end{equation} What is the asymptotic complexity of $\Sigma$? What are other dynamical properties of $(\Sigma, \sigma)$? Is this system topologically transitive (i.e., does it have a dense orbit)? Are there periodic points?

Recall the basic blocks $B_\xi (x,y)$ defined above.
Basic blocks are determined by the orderings on the edges above them. See Figure \ref{fig:determineblocks}.
 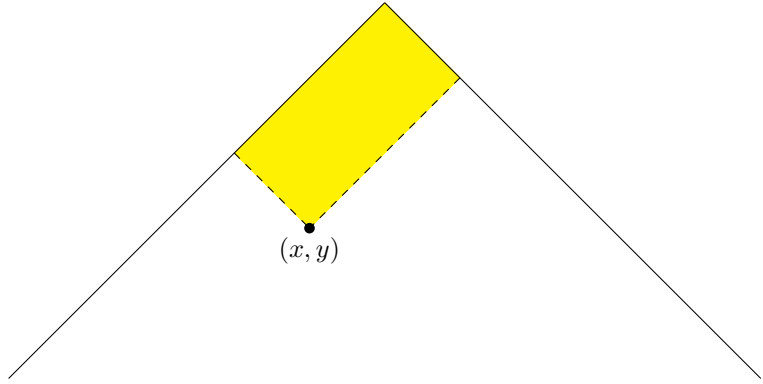
\begin{figure}[h]
\begin{center}
\begin{tikzpicture}[scale=1]
\draw[fill=yellow, yellow] (3,3)--(4,2)--(6,4)--(5,5);
\draw (5,5) -- (0,0);
\draw (5,5) -- (10,0);
\fill (4,2) circle (2 pt);
\node[below] at (4,2) {$(x,y)$};
\draw[dashed](6,4)-- (4,2);
\draw[dashed](3,3)-- (4,2);
\end{tikzpicture}
\end{center}
\caption{Each choice of ordering of the incoming edges to the vertices in the shaded region will determine a $B_{\xi}(x,y)$.}
\label{fig:determineblocks}
\end{figure}
However, a block may be generated by different orderings. For example, there exist two distinct orderings, $\xi_1$ and $\xi_2$ such that $B_{\xi_1}(2,1)=B_{\xi_2}(2,1)=aba$. This can continue throughout the diagrams. See Figure \ref{fig:differentblocks}.

\begin{figure}
\begin{tikzpicture}[scale=.5]
\foreach \n in {0,...,6}{ 
\foreach \k in {0,...,\n}{ 
\path(2*\k-\n,-\n) coordinate (V\n\k); 
\fill (V\n\k) circle (3 pt);
}}
\draw(0,0) --++(6,-6);
\draw(0,0)--++(-6,-6);
\draw(1,-1) --++(-1,-1);\draw[dashed](0,-2) to (-1,-3);\draw(-1,-3) to (-2,-4);\draw[dashed](-2,-4) to (-3,-5);
\draw[dashed](-1,-1) -- ++(2,-2);
\draw(2,-2) --++(-3,-3);\draw[dashed](-1,-5) to (-2,-6);
\draw(-2,-2)--++(1,-1);\draw[dashed](-1,-3)--++(1,-1);
\draw[dashed](-3,-3)--++(2,-2);
\draw(-4,-4)--++(2,-2);
\node[below]at (-2,-6){$(4,2)$};
\node[left] at (-3,-2.5){$\xi_1$};
\end{tikzpicture}
\hskip .25in
\begin{tikzpicture}[scale=.5]
\foreach \n in {0,...,6}{ 
\foreach \k in {0,...,\n}{ 
\path(2*\k-\n,-\n) coordinate (V\n\k); 
\fill (V\n\k) circle (3 pt);
}}
\draw(0,0) --++(6,-6);
\draw(0,0)--++(-6,-6);
\draw(-1,-1)--++(2,-2);
\draw(-2,-2)--++(1,-1);\draw[dashed](-1,-3)--+(1,-1);
\draw[dashed](-3,-3)--+(1,-1);
\draw(-2,-4)--+(1,-1);
\draw[dashed](-4,-4)--+(2,-2);
\draw[dashed](1,-1)--+(-2,-2);\draw(-1,-3)--+(-2,-2);
\draw[dashed](2,-2)--+(-1,-1);\draw(1,-3)--+(-1,-1);\draw[dashed](0,-4)--+(-1,-1);\draw(-1,-5)--+(-1,-1);
\node[below]at(-2,-6){$(4,2)$};
\node[right] at (3,-2.5){$\xi_2$};
\end{tikzpicture}
\caption{Dashed lines represent minimal edges. Both orderings $\xi_1$ and $\xi_2$ give rise to $B(4,2)=a^2ba^2ba^2b^2a^2ba^2$.}
\label{fig:differentblocks}
\end{figure}
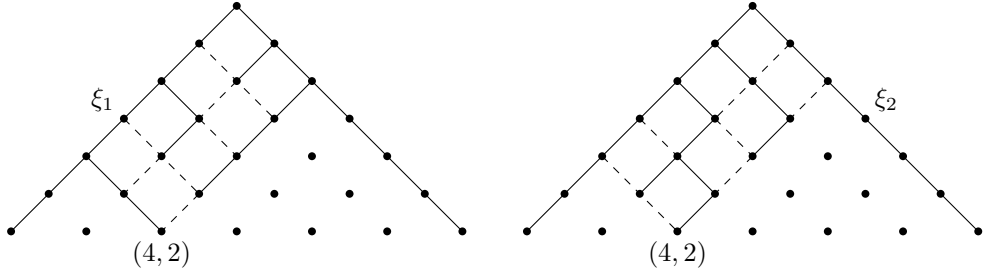

Assuming a fixed ordering $\xi$, the basic blocks $B(x,1)$, for $x \geq 1$, will be used frequently in the following and we will give them the special name $C_x$. In a similar fashion, for $y \geq 1$ define the block $D_y=B(1,y)$.

In the following we will consider the set of diagrams for which
\[\xi(x,1)=\xi(2,y)=0.\]In other words, the ordering on the edges coming into vertices $(x,1)$ and $(1,y)$ is left to right for all values of $x\geq 1$ and $y\geq 1$. This means that $C_i=a^ib$ and $D_j=ab^j$ for $i,j\geq 1$. In the following we will show that for such orderings basic blocks have unique factorizations.

\begin{lem}\label{lem:decomp} Given $(x,y)$ with $x \geq 1$ and $y \geq 1$, there is a one-to-one correspondence between the set of possible basic blocks \[\{B_{\xi}(x,y):\xi(i,1)=\xi(1,j)=0\text{ for all }i,j\geq 1\}\] and the set of orderings of edges into each vertex $(u,v)$ with $u \leq x$ and $v \leq y$: $B_\xi (x,y)=B_\eta (x,y)$ if and only if $\xi(u,v)=\eta(u,v)$ for all $(u,v)$ with $u \leq x$ and $v \leq y$. \end{lem}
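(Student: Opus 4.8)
The plan is to prove both implications, treating the ``orderings determine the block'' direction as routine and concentrating on injectivity of the map $\xi \mapsto B_\xi(x,y)$. The easy direction is immediate by induction on the level $x+y$: the recursive definition of $B_\xi(x,y)$ refers only to the value $\xi(x,y)$ and to the blocks $B_\xi(x,y-1)$ and $B_\xi(x-1,y)$, whose defining vertices lie in $\{(u,v):u\le x,\ v\le y\}$. Hence if $\xi$ and $\eta$ agree on this set, then $B_\xi(x,y)=B_\eta(x,y)$.

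For the hard direction I would argue by strong induction on $n=x+y$ that $B_\xi(x,y)=B_\eta(x,y)$ forces $\xi=\eta$ on $\{(u,v):u\le x,\ v\le y\}$. When $x=1$ or $y=1$ the boundary convention $\xi(i,1)=\xi(1,j)=0$ fixes the ordering completely, so there is nothing to prove. For $x,y\ge 2$ write $W=B_\xi(x,y)=B_\eta(x,y)$. The two children $B(x,y-1)$ and $B(x-1,y)$ have lengths $C(x+y-1,x)$ and $C(x+y-1,x-1)$ and $a$-counts $C(x+y-2,x-1)$ and $C(x+y-2,x-2)$, all independent of the ordering (the $a$-counts are as recorded in Section 2). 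A short computation shows the two lengths coincide exactly when $x=y$, and that in that case the two $a$-counts differ; so the pair (length, $a$-count) always distinguishes the two children. Thus, once the correct cut point of $W$ into its two children is known, reading off the length and $a$-count of the first piece recovers $\xi(x,y)$ together with $B_\xi(x,y-1)=B_\eta(x,y-1)$ and $B_\xi(x-1,y)=B_\eta(x-1,y)$; the inductive hypothesis applied to each child, plus the recovered value $\xi(x,y)$, then yields $\xi=\eta$ on all of $\{(u,v):u\le x,\ v\le y\}$, since the two child--triangles together with the apex $(x,y)$ cover this set.

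It remains to pin down the cut point, and this is where the real work lies. If $x=y$ both children have equal length, so the cut is forced to the midpoint of $W$ and we are done by the previous paragraph. If $x\neq y$ the children have different lengths, and the only danger is that $\xi(x,y)\neq\eta(x,y)$, i.e.\ that $W$ admits the two ``crossed'' factorizations $B_\xi(x,y-1)B_\xi(x-1,y)$ and $B_\eta(x-1,y)B_\eta(x,y-1)$ with cut points at the two \emph{different} positions $C(x+y-1,x)$ and $C(x+y-1,x-1)$. Comparing the two prefixes of $W$, this would force the shorter of the sibling blocks $B(x,y-1)$, $B(x-1,y)$ to be a proper prefix of the longer one. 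Hence the crux is the following \emph{prefix lemma}: for distinct vertices $(u_1,v_1)\neq(u_2,v_2)$ at the same level with all coordinates $\ge 1$, no basic block $B_\xi(u_1,v_1)$ is a proper prefix of a basic block $B_\eta(u_2,v_2)$.

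I would prove the prefix lemma by its own induction on the level $m=u_1+v_1=u_2+v_2$, and I expect it to be the main obstacle. Suppose $B_\xi(u_1,v_1)$ is a proper prefix of $B_\eta(u_2,v_2)$; then $B(u_1,v_1)$ is the shorter block, and after applying the symmetry $a\leftrightarrow b$, $(u,v)\leftrightarrow(v,u)$ (which preserves lengths and the prefix relation) we may assume $u_1>m/2$, whence $u_2<u_1$. The key elementary observation is that, because of the boundary convention $C_i=a^ib$ and $D_j=ab^j$, the leading run of $a$'s of any $B(u,v)$ has length at most $u$ (follow the first-child chain down to a boundary block). If $v_1=1$, then $B(u_1,1)=C_{u_1}=a^{u_1}b$ begins with $u_1$ consecutive $a$'s, while $B(u_2,v_2)$ has leading $a$-run at most $u_2<u_1$, contradicting the prefix relation. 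If $v_1\ge 2$, one checks (using $m\ge 4$) that both $(u_1,v_1)$ and $(u_2,v_2)$ have both coordinates $\ge 2$, so all of their children are interior vertices at level $m-1$; peeling off first children and comparing their lengths reduces, in each case, to an instance of the prefix lemma one level lower (when the two first children have equal length they must be equal, and one peels the second children instead, which again have unequal length and hence lie at distinct vertices). The inductive hypothesis then yields a contradiction. With the prefix lemma in hand, the crossed factorization in the case $x\neq y$ is ruled out, and the main induction goes through.
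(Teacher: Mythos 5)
Your overall skeleton is sound and is a genuinely different route from the paper's: induction on the level, recovery of $\xi(x,y)$ and the cut point from the (length, $a$-count) data of the two children, and reduction of the crossed-factorization danger to a prefix lemma. The easy direction, the $x=y$ case, and the reduction of the $x\neq y$ case to the prefix lemma are all correct. But your proof of the prefix lemma --- which you rightly identify as the crux --- has a genuine gap at its first step. The symmetry $a\leftrightarrow b$, $(u,v)\leftrightarrow(v,u)$ does \emph{not} preserve the class of orderings with $\xi(i,1)=\xi(1,j)=0$: transposing such an ordering produces one whose boundary blocks are $B(i,1)=ba^i$ and $B(1,j)=b^ja$. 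So after ``WLOG $u_1>m/2$'' your blocks no longer satisfy the hypothesis of the lemma you are proving, and the key observation you then invoke (every block begins with $a$ and has leading $a$-run at most its first coordinate, with equality $u_1$ for $B(u_1,1)=a^{u_1}b$) is simply false in the transposed class, where every block begins with $b$. If instead you stay in the original class and forgo the symmetry, you must treat the case $u_1=1$ head-on: rule out that $B_\xi(1,v_1)=ab^{v_1}$ is a proper prefix of some $B_\eta(u_2,v_2)$ with $v_2<v_1$. There both blocks have leading $a$-run equal to $1$, so leading-run information is useless; what is needed is that no $b$-run of length exceeding $v_2$ occurs \emph{anywhere} in $B_\eta(u_2,v_2)$, an interior-run bound you never establish. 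This case is not a formality that bookkeeping with the symmetry can remove (the symmetry merely exchanges it with the equally unhandled case $v_1=1$ of the transposed class), and the lemma really is sensitive to the convention: with $\xi(1,1)=0$, $\xi(2,1)=1$, $\xi(3,1)=0$ one gets $B_\xi(3,1)=aaba$, a proper prefix of $B_\eta(2,2)=aababb$ for the left-to-right ordering $\eta$, so no convention-free argument can close it.

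The gap is repairable, and the repair essentially recreates the paper's proof, which avoids any prefix lemma. The paper notes that under the convention every $B_\xi(x,y)$ with $x,y\geq1$ is a concatenation of blocks $C_i=a^ib$ and $D_j=ab^j$; since each factor begins with $a$ and ends with $b$, the factorization is found by cutting at every occurrence of $ba$ and hence is unique; moreover every $C_i$ occurring has $i\leq x$, every $D_j$ has $j\leq y$, and $C_x$ and $D_y$ each occur exactly once. Reading off whether $C_x$ precedes $D_y$ then determines $\xi(x,y)$ and the cut point, and the recursion proceeds. These structural facts also yield at once what your argument is missing: they give the interior-run bounds ($a$-runs at most $u$, $b$-runs at most $v$, since runs cannot straddle a junction between $C/D$ factors), and they prove your prefix lemma directly (a proper prefix relation between distinct same-level blocks would force the longer block to contain a $C_i$ or $D_j$ whose index exceeds the corresponding coordinate of its vertex). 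Alternatively, you can keep your peeling induction and prove the interior-run bound by its own easy induction, using that in this class every block with both coordinates $\geq 1$ ends in $b$, so runs never span the junction of the two children; with that, both boundary cases $v_1=1$ and $u_1=1$ go through and no symmetry is needed. As written, however, the prefix lemma is unproved, so the injectivity direction of the correspondence is not yet established.
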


\begin{proof}
The reverse direction is clear from the definition of $B_{\xi}(x,y)$. We will therefore consider a $B_{\xi}(x,y)$ and show that it gives rise to a unique $\xi$, given the appropriate restrictions.

Assume $x,y>1$ and we are given a basic block $B_\xi(x,y)$. Starting from the left in $B_\xi(x,y)$, find the first occurrence of a block {of the form, for some $i_0,j_0 \geq 1$,
$a^{i_0}b^{j_0}a$, or $a^{i_0}b^{j_0}$ if no $a^{i_0}b^{j_0}a$ is to be found.} Because of the restriction on $\xi$, we know that it is either the case that $i_0=1$ and $j_0\neq 1$ or $i_0\neq 1$ and $j_0=1$. In the first case, $a^{i_0}b^{j_0}$ corresponds to the block $D_{j_0}$, and in the second to $C_{i_0}$. Continue {left to right} in this fashion to decompose $B_\xi(x,y)$ completely into a sequence of $C_i$'s and $D_j$'s. Note that there is a unique path between {$(x,1)$} and $(x,y)$, and therefore $C_x$ appears only once in $B_\xi(x,y)$. Furthermore, if $i>x$ then there is no path from $(i,1)$ to $(x,y)$, and therefore $C_i$ cannot be a subblock of $B(x,y)$. A similar argument gives the result for the $D_y$. So $C_x$ and $D_y$ each appear exactly once, and if $C_i$ (or $D_j$) appears in $B_\xi(x,y)$, then $i{\leq}x$ (or $j\leq y$).

Now, $B_\xi(x,y)$ decomposes into the concatenation of $B_\xi(x-1,y)$ and $B_\xi(x,y-1)$. Note that the unique $C_x$ in $B_\xi(x,y)$ is a subblock of the $B_\xi(x,y-1)$ portion of $B_{\xi}(x,y)$. Likewise, $D_y$ is a aubblock of the $B_\xi(x-1,y)$ portion of $B_\xi(x,y)$. Therefore, if $C_x$ appears before $D_y$, then $B_\xi(x,y)=B_\xi(x,y-1)B_\xi(x-1,y)$, and vice versa. This determines the order that $\xi$ gives to the edges with range $(x,y)$. We also know the lengths of each of $B(x,y-1)$ and $B(x-1,y)$. Once we know which one comes first, we know exactly where the juxtaposition between the two occurs. We repeat this process on each word above $B(x,y)$ to
conclude that no ordering besides $\xi$ could have produced the block $B_\xi(x,y)$. \end{proof}

We work through an example.
\begin{ex} Consider the following block:
$$ab^3ab^2a^2ba^3bab^2a^2ba^3bab^2a^2ba^4b$$
Because we know the structure of the $C_i$ and $D_j$, we see that the first set of $a$'s and $b$'s is $ab^3$. This must be $D_3$. Then the next set of $a$'s and $b$'s is $ab^2$. This must be $D_2$. We continue in this manner to see that the above block decomposes into $D_3D_2C_2C_3D_2C_2C_3D_2C_2C_4$. By the above lemma, this must be block $B(4,3)$. Because the $D_3$ appears before the $C_4$, $B(4,3)$ must decompose into $B(3,3)B(4,2)$, which implies there was a right to left ordering on the edges with range $(4,3)$. We need only to know that $|B(3,3)|=20$ to see that $B(3,3)=D_3D_2C_2C_3D_2C_2$ and $B(4,2)=C_3D_2C_2C_4$. We continue in this manner until we have placed an order on the incoming edges of all 6 unordered vertices that are above $(4,3)$. See Figure \ref{fig:Decomposition}.\end{ex}

\begin{figure}[h]
\begin{tikzpicture}[scale=.5]
\foreach \n in {0,...,7}{ 
\foreach \k in {0,...,\n}{ 
\path(2*\k-\n,-\n) coordinate (V\n\k); 
\fill (V\n\k) circle (3 pt);
}}
\draw(0,0) --++(7,-7);
\draw(0,0)--++(-7,-7);
\draw[dashed](-1,-1)--+(3,-3);
\draw[dashed](-2,-2)--+(1,-1);\draw(-1,-3)--+(2,-2);
\draw[dashed](-3,-3)--+(2,-2);\draw(-1,-5)--+(1,-1);
\draw[dashed](-4,-4)--+(2,-2);\draw(-2,-6)--+(1,-1);
\draw(1,-1)--+(-4,-4);
\draw(2,-2)--+(-1,-1);\draw[dashed](1,-3)--+(-1,-1);\draw(0,-4)--+(-2,-2);
\draw(3,-3)--+(-1,-1);\draw[dashed](2,-4)--+(-3,-3);
\end{tikzpicture}
\caption{The ordering of edges determined by $B(4,3)=ab^3ab^2a^2ba^3bab^2a^2ba^3bab^2a^2ba^4b$}
\label{fig:Decomposition}
\end{figure}
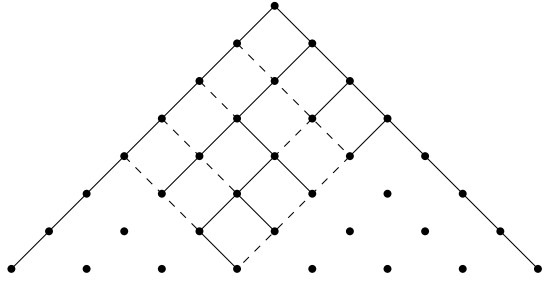

\begin{prop} Given left to right ordering on edges terminating at vertices $(u,1)$ and $(1,v)$ for all $u,v\geq 1$, then for all $x,y\geq 1$ there are $2^{(x-1)(y-1)}$ distinct blocks $B_\xi(x,y)$ as we vary the orderings $\xi$.
\label{Prop:DistinctB}\end{prop}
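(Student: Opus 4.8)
The plan is to deduce the count directly from the bijection of Lemma~\ref{lem:decomp}, so that the statement reduces to counting the vertices at which the ordering is genuinely free. First I would record which orderings actually influence $B_\xi(x,y)$. From the recursion $B_\xi(x,y)=B_\xi(x,y-1)B_\xi(x-1,y)$ (or $B_\xi(x-1,y)B_\xi(x,y-1)$) together with the boundary blocks $B_\xi(i,0)=a$ and $B_\xi(0,j)=b$, the block $B_\xi(x,y)$ depends only on the values $\xi(u,v)$ for the vertices $(u,v)$ in the box $\{(u,v): 1\le u\le x,\ 1\le v\le y\}$ pictured as the shaded cone in Figure~\ref{fig:determineblocks} (vertices with $u=0$ or $v=0$ have a single incoming edge and so carry no order). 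Lemma~\ref{lem:decomp} then says that, among orderings satisfying the standing hypothesis $\xi(u,1)=\xi(1,v)=0$, the map $\xi\mapsto B_\xi(x,y)$ is injective: $B_\xi(x,y)=B_\eta(x,y)$ exactly when $\xi$ and $\eta$ agree on this box. Consequently the number of distinct blocks equals the number of distinct restrictions of such orderings to the box.

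Next I would count those restrictions. The hypothesis pins $\xi(u,v)=0$ at every vertex of the bottom row $v=1$ and of the left column $u=1$, so these carry no choice. The vertices at which the order may still vary are precisely the interior ones, $(u,v)$ with $2\le u\le x$ and $2\le v\le y$; there are $(x-1)(y-1)$ of them, and each independently admits the two values $\xi(u,v)\in\{0,1\}$. Hence there are exactly $2^{(x-1)(y-1)}$ admissible restrictions to the box, and by the injectivity from Lemma~\ref{lem:decomp} this equals the number of distinct blocks $B_\xi(x,y)$. The degenerate cases $x=1$ or $y=1$, where the proof of Lemma~\ref{lem:decomp} was written only for $x,y>1$, are immediate: when $x=1$ the block is forced to be $D_y=ab^y$ and when $y=1$ it is forced to be $C_x=a^x b$, giving a single block, in agreement with $2^{(x-1)(y-1)}=2^0=1$.

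I do not expect a genuine obstacle here, since the substantive content—that a basic block recovers the order at every vertex above it, under the boundary normalization—is exactly what Lemma~\ref{lem:decomp} provides. The only point needing care is the bookkeeping of free versus constrained vertices: one must verify that the product $(x-1)(y-1)$ correctly excludes the bottom row and left column whose orders are fixed to $0$ by hypothesis, and that the boundary vertices with $u=0$ or $v=0$ are irrelevant because they have a unique incoming edge.
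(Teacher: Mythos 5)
Your proof is correct and follows essentially the same route as the paper's: invoke Lemma~\ref{lem:decomp} for injectivity of $\xi \mapsto B_\xi(x,y)$ on the relevant box of vertices, then count the $(x-1)(y-1)$ vertices whose ordering remains free after the boundary normalization $\xi(u,1)=\xi(1,v)=0$. Your version is slightly more careful than the paper's (which states the count of free vertices without spelling out the exclusion of the constrained row and column, and does not address the degenerate cases $x=1$ or $y=1$), but the substance is identical.
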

\begin{proof} By the Lemma, each ordering $\xi$ of the edges entering vertices $(u,v)$ above $(x,y)$ ($1 \leq u \leq x, 1 \leq v \leq y$) gives rise to a distinct $B_\xi(x,y)$. There are $(x-1)(y-1)$ such vertices $(u,v)$ above $(x,y)$ and two choices of ordering for each vertex, yielding $2^{(x-1)(y-1)}$ distinct orderings and hence that many blocks $B_\xi(x,y)$. \end{proof}

\begin{prop} For the big subshift, the complexity function $p(n)$ is superpolynomial: for each $k=2,3,\dots$ there are at least $2^{k-1}$ blocks of length $(k+1)(k+2)/2$ in the big subshift.
\end{prop}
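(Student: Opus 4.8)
The plan is to read off the complexity at the specific length $n=(k+1)(k+2)/2$ directly from the count of distinct basic blocks already obtained in Proposition \ref{Prop:DistinctB}. The key observation is that $(k+1)(k+2)/2$ is precisely the common length of the basic blocks $B_\xi(k,2)$: since the number of $a$'s in $B_\xi(x,y)$ is $C(x+y-1,x-1)$ and the number of $b$'s is $C(x+y-1,y-1)$, the length of $B_\xi(k,2)$ is
\begin{equation}
C(k+1,k-1)+C(k+1,1)=\frac{(k+1)k}{2}+(k+1)=\frac{(k+1)(k+2)}{2},
\end{equation}
independently of the ordering $\xi$. Thus every basic block attached to the vertex $(k,2)$ has exactly the target length, and these blocks furnish the words of the big subshift that we will count.

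First I would restrict attention to the orderings $\xi$ satisfying the boundary condition $\xi(u,1)=\xi(1,v)=0$ for all $u,v\geq 1$, i.e. those for which $C_i=a^ib$ and $D_j=ab^j$. By Proposition \ref{Prop:DistinctB} applied with $x=k$ and $y=2$, as $\xi$ ranges over this class the basic blocks $B_\xi(k,2)$ realize exactly $2^{(k-1)(2-1)}=2^{k-1}$ \emph{distinct} words. Each such $B_\xi(k,2)$ is, by definition, an initial segment of the $1$-coding of the minimal path from the root to $(k,2)$ under $\xi$, so it lies in $\mathscr L(\Sigma_\xi)\subseteq\mathscr L(\Sigma)$. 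Consequently the big subshift contains at least $2^{k-1}$ distinct blocks of length $(k+1)(k+2)/2$, which is the asserted lower bound $p\bigl((k+1)(k+2)/2\bigr)\geq 2^{k-1}$.

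Finally I would record the asymptotic consequence: writing $n=(k+1)(k+2)/2$ gives $k\sim\sqrt{2n}$, so the bound reads $p(n)\geq 2^{\,k-1}\sim 2^{\sqrt{2n}}$ along this subsequence of lengths, which exceeds any polynomial in $n$; hence the complexity of $(\Sigma,\sigma)$ is superpolynomial.

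I do not expect a serious obstacle here, since all the combinatorial work has been done in Proposition \ref{Prop:DistinctB}; the only points requiring care are bookkeeping ones. The first is confirming that the $2^{k-1}$ words counted by Proposition \ref{Prop:DistinctB} are distinct \emph{as blocks of the same fixed length}, which is automatic because they all have length $(k+1)(k+2)/2$ and the proposition asserts their distinctness as strings. The second is ensuring that each of these strings genuinely appears in $\Sigma$ rather than merely being an abstract concatenation; this is guaranteed because each $B_\xi(k,2)$ occurs in the $1$-coding of an actual path (the minimal path to $(k,2)$) for the ordering $\xi$, and the language of the big subshift is by definition the union $\cup_\xi\mathscr L(\Sigma_\xi)$.
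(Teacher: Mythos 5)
Your proof is correct and is essentially the same as the paper's: both compute that $|B_\xi(k,2)| = C(k+2,2) = (k+1)(k+2)/2$ and then invoke Proposition \ref{Prop:DistinctB} with $(x,y)=(k,2)$ to obtain $2^{k-1}$ distinct basic blocks of that length, each of which lies in $\mathscr L(\Sigma)$ since it is an initial block of the $1$-coding of the minimal path to $(k,2)$. Your version merely spells out the bookkeeping (the boundary restriction on $\xi$, membership in the language, and the asymptotic $2^{\sqrt{2n}}$ consequence) that the paper leaves implicit.
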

\begin{proof}
Let $k=2,3,\dots$ and $n=(k+1)(k+2)/2$. Note that the binomial coefficient $ C(k+2,2)=n$. Then for any ordering $\xi$, $B_{\xi}(k,2)$ has length $n$ since the length of $B(k,2)$ is the binomial coefficient $ C(k+2,2)$.

 Then the coarse lower bound above is given by just looking at the $2^{k-1}$ unique ways to form $B_\xi(k,2)$ and applying Proposition \ref{Prop:DistinctB}.
\end{proof}

We will show next that the big subshift $\Sigma$ is not topologically transitive and that the ``small subshift" whose language is $\cap_\xi \mathscr L (\Sigma_\xi)$ consists of only the two fixed points and the orbits of the points $a^\infty . b a^\infty$ and $b^\infty . a b^\infty$.

\begin{lem}\label{lem:ba^9}
(1) There are orderings $\xi$ and $\xi'$ of the Pascal graph such that $B_\xi(3,3)=a(ab)^9b$ and {$B_{\xi'}(3,3)=b(ba)^9a$}.
(2) For every ordering $\xi$ and every $x,y \geq 0$, the basic block $B_\xi (x,y)$ does not contain both $(ab)^9$ and $(ba)^9$.
\end{lem}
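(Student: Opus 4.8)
The plan is to prove part~(1) by an explicit construction and part~(2) by analysing the longest alternating runs ($abab\cdots$ and $baba\cdots$) that can occur inside a basic block.

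\emph{Part (1).} Since $B_\xi(3,3)$ depends only on the nine orderings $\xi(u,v)$ with $1\le u,v\le 3$, it suffices to exhibit one suitable choice and invoke symmetry. Using the recursion defining the basic blocks, the assignment $\xi(1,1)=1$, $\xi(2,1)=\xi(1,2)=0$, $\xi(2,2)=1$, $\xi(3,1)=\xi(1,3)=0$, $\xi(3,2)=\xi(2,3)=\xi(3,3)=0$ gives $C_2=aba$, $D_2=bab$, $B(2,2)=bababa$, $C_3=aaba$, then $B(3,2)=C_3\,B(2,2)=aababababa$ and $B(2,3)=B(2,2)\,D_3=bababababb$, and finally $B_\xi(3,3)=B(3,2)B(2,3)=a(ab)^9b$. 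Applying the reflection of the Pascal graph across its main diagonal, which interchanges $a\leftrightarrow b$ in every basic block and sends $\xi$ to the ordering $\xi'(x,y)=1-\xi(y,x)$, turns this block into its $a\leftrightarrow b$ image, so $B_{\xi'}(3,3)=b(ba)^9a$.

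\emph{Part (2).} For a word $W$ on $\{a,b\}$ let $r(W)$ be the length of its longest \emph{alternating factor} (a factor containing neither $aa$ nor $bb$), and let $\ell_p(W),\ell_s(W)$ be the lengths of its longest alternating prefix and suffix. An alternating word of length $18$ is either $(ab)^9$ or $(ba)^9$, so $W$ contains $(ab)^9$ (resp.\ $(ba)^9$) exactly when it has an alternating factor equal to that word. The first step is to show $\ell_p,\ell_s\le 9$ for every $B_\xi(x,y)$. For a concatenation $W=UV$ one has $\ell_p(W)=\ell_p(U)$ unless $U$ is entirely alternating, in which case $\ell_p(W)=|U|+\ell_p(V)$ when the junction symbols differ. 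The only entirely alternating basic blocks have level at most $4$: for $n=x+y\ge 5$ the imbalance $|C(n-1,x-1)-C(n-1,y-1)|$ between the numbers of $a$'s and $b$'s is at least $2$ when $x\ne y$, and $B(k,k)$ with $k\ge 3$ fails to alternate because its unbalanced child would have to. Hence for $n\ge 6$ neither child of $B_\xi(x,y)$ is fully alternating, so $\ell_p(B_\xi(x,y))=\ell_p(\text{left child})$; combined with a direct computation through level $5$ (where the maximum $\ell_p=9$ is attained only at $(3,2)$ and $(2,3)$), induction on $n$ gives $\ell_p\le 9$, and reversing words, i.e.\ replacing $\xi$ by $1-\xi$, gives $\ell_s\le 9$. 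Therefore $r(UV)\le\max\{r(U),r(V),\ell_s(U)+\ell_p(V)\}\le 18$ throughout.

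The heart of the argument is to identify which level-$5$ blocks carry a length-$9$ alternating run. An alternating word of length $9$ is $ababababa$ ($5$ $a$'s, $4$ $b$'s) or $babababab$ ($5$ $b$'s, $4$ $a$'s); since $B_\xi(3,2)$ has only $4$ $b$'s and $B_\xi(2,3)$ only $4$ $a$'s, and these are the only level-$5$ blocks with $r=9$, every length-$9$ alternating factor of $B_\xi(3,2)$ equals $ababababa$ and every one of $B_\xi(2,3)$ equals $babababab$. Now suppose $B_\xi(x,y)$ contained $(ab)^9$. By $r\le 18$ this is a maximal alternating factor of length $18$; it cannot sit inside one level-$5$ sub-block (length $\le 10$) nor span three (the middle one would be fully alternating), so it spans exactly two adjacent level-$5$ sub-blocks, taking its left half $ababababa$ (which ends in $a$) from the alternating suffix of the left block and its right half from the alternating prefix of the right block. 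The left block must then be $B_\xi(3,2)$, so $\ell_s(B_\xi(3,2))=9$. Symmetrically, if $B_\xi(x,y)$ contained $(ba)^9$ the corresponding right block would be $B_\xi(3,2)$, forcing $\ell_p(B_\xi(3,2))=9$. But $B_\xi(3,2)$ has length $10$ with $6$ $a$'s and $4$ $b$'s, so it is not fully alternating and cannot satisfy both $\ell_p=9$ and $\ell_s=9$, since these would make positions $1$--$9$ and $2$--$10$, hence all of $B_\xi(3,2)$, alternate. Thus no $B_\xi(x,y)$ contains both $(ab)^9$ and $(ba)^9$.

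The main obstacle is the uniform bound $\ell_p,\ell_s\le 9$: it rests on the fact that alternation is only ever ``injected'' by the finitely many fully alternating low-level blocks, so that the prefix- and suffix-run lengths stabilise from level $6$ onward. The decisive conceptual point is then that any length-$18$ alternating run must straddle a $B_\xi(3,2)$--$B_\xi(2,3)$ junction, and that the presence of both $(ab)^9$ and $(ba)^9$ would impose incompatible prefix- and suffix-alternation demands on the \emph{single} word $B_\xi(3,2)$.
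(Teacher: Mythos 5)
Your part (1) is correct: the recursion does give $B_\xi(3,3)=a(ab)^9b$ for your ordering, and the reflection symmetry $\xi'(x,y)=1-\xi(y,x)$, which exchanges $a\leftrightarrow b$ in all basic blocks, correctly yields $B_{\xi'}(3,3)=b(ba)^9a$. The endgame of your part (2) is also attractive and genuinely different from the paper's (which enumerates the possible orderings down to level $5$ and exhibits the unique one producing $(ab)^9$ and the unique, different, one producing $(ba)^9$): your parity count forcing $\ell_s(B_\xi(3,2))=9$ from an occurrence of $(ab)^9$ and $\ell_p(B_\xi(3,2))=9$ from an occurrence of $(ba)^9$, and the observation that both together would force the whole of $B_\xi(3,2)$ to alternate against its $6$-to-$4$ letter count, is a clean contradiction. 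However, there is a genuine gap in the middle of your argument. The claim ``the only entirely alternating basic blocks have level at most $4$'' is false: the boundary blocks $B_\xi(x,0)=a$ and $B_\xi(0,y)=b$ are single letters, hence trivially alternating, at \emph{every} level. This breaks your proof in two places. First, the induction step ``for $n\ge 6$ neither child of $B_\xi(x,y)$ is fully alternating, so $\ell_p(B_\xi(x,y))=\ell_p(\text{left child})$'' fails when $x=1$ or $y=1$, since one child is then a single letter; this instance is harmless, because $B_\xi(x,1)$ contains exactly one $b$ and $B_\xi(1,y)$ exactly one $a$, so all their alternating factors have length at most $3$. Second, and fatally for the written argument, the same false claim is what you invoke to exclude a length-$18$ run spanning three or more pieces of the level-$5$ factorization (``the middle one would be fully alternating''). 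For any $B_\xi(x,y)$ with $\max(x,y)\ge 5$, the level-$5$ factorization \emph{does} contain fully alternating pieces, namely the single letters coming from the boundary blocks $B_\xi(k,0)$ and $B_\xi(0,k)$ with $k\ge 5$. So configurations such as (alternating suffix of length $8$ of one block)(single letter)(alternating prefix of length $9$ of another block) are not ruled out by anything you prove, and the dichotomy ``inside one piece or across exactly one junction'' does not follow.

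The gap is reparable, and the repair is essentially the ingredient the paper makes explicit. Either analyze where single-letter pieces can sit (each has, within its parent $B_\xi(k,1)$ or $B_\xi(1,k)$, a neighbor beginning or ending with the same letter except in degenerate configurations whose other neighbor has alternating suffix at most $2$, so no length-$18$ run can pass through one), or, more cleanly, change the factorization: stop the recursion not at level $5$ but at the blocks $B_\xi(3,2)$, $B_\xi(2,3)$, $B_\xi(u,1)$ with $u\ge 4$, and $B_\xi(1,v)$ with $v\ge 4$. Then no piece is a single letter, no piece is fully alternating (the near-boundary pieces have length at least $5$ but alternating factors of length at most $3$, while $B_\xi(3,2)$ and $B_\xi(2,3)$ have letter-count imbalance $2$), and every piece has $\ell_p,\ell_s\le 9$; your two-block analysis, with $9+9=18$ forcing both pieces to lie in $\{B_\xi(3,2),B_\xi(2,3)\}$, then goes through verbatim. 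This is precisely the role of the paper's ``first observation'' that $Q^L_\xi(x,1)$ and $Q^R_\xi(x,1)$ (and symmetrically at $(1,y)$) have length at most $3$ --- the boundary control your proof is missing.
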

\begin{proof}

To prove (1), we proceed to define the orderings $\xi$ and $\xi'$ called for in the Lemma. Define $\xi(1,1)=\xi'(1,1)=0$, so that $B_\xi(1,1)=B_{\xi'}(1,1)=ab$.
Further, choose $\xi$ and $\xi'$ so that $B_\xi(2,1)=B_{\xi'}(2,1)=aba$ and $B_\xi(1,2)=B_{\xi'}(1,2)=bab$. {Continuing, define $\xi$ and $\xi'$ so that $B_\xi(3,1)=a^2ba$, $B_\xi(2,2)=(ba)^3$, and $B_\xi(1,3)=bab^2$, while $B_{\xi'}(3,1)=aba^2$, $B_{\xi'}(2,2)=(ab)^3$, and $B_{\xi'}(1,3)=b^2ab$.}
Then define $\xi(3,2)=\xi(2,3)=\xi(3,3)=0$ (so that the ordering by $\xi$ is ``left-right'' at these vertices) and $\xi'(3,2)=\xi'(2,3)=\xi'(3,3)=1$ (so that the ordering by $\xi'$ is ``right-left'' at these vertices).
We then have $B_\xi(3,3)=a(ab)^9b$ and $B_{\xi'}(3,3)=b(ba)^9a$. See Figure \ref{fig:ab9ba9}.
\begin{figure}[h]
\begin{tikzpicture}[scale=.5]
\foreach \n in {0,...,6}{ 
\foreach \k in {0,...,\n}{ 
\path(2*\k-\n,-\n) coordinate (V\n\k); 
\fill (V\n\k) circle (3 pt);
}}
\draw(0,0) --++(6,-6);\draw(0,0)--++(-6,-6);
\draw(-1,-1)--+(-1,-1);\draw[dashed](-1,-1)--+(1,-1);
\draw(1,-1)--+(-1,-1);\draw(1,-1)--+(1,-1);
\draw(-2,-2)--+(-1,-1);\draw(-2,-2)--+(1,-1);
\draw(0,-2)[dashed]--+(-1,-1);\draw(0,-2)--+(1,-1);
\draw(2,-2)[dashed]--+(-1,-1);\draw(2,-2)--+(1,-1);
\draw[dashed](-3,-3)--+(1,-1);
\draw(-1,-3)--+(-1,-1);\draw(-1,-3)--+(1,-1);
\draw(1,-3)[dashed]--+(-1,-1);\draw(1,-3)[dashed]--+(1,-1);
\draw(3,-3)--+(-1,-1);
\draw[dashed](-2,-4)--+(1,-1);
\draw(0,-4)--+(-1,-1);\draw[dashed](0,-4)--+(1,-1);
\draw(2,-4)--+(-1,-1);
\draw(-1,-5)[dashed]--+(1,-1);
\draw(1,-5)--+(-1,-1);
\node[left] at (-3,-2.5){$\xi$};
\end{tikzpicture}
\begin{tikzpicture}[scale=.5]
\foreach \n in {0,...,6}{ 
\foreach \k in {0,...,\n}{ 
\path(2*\k-\n,-\n) coordinate (V\n\k); 
\fill (V\n\k) circle (3 pt);
}}
\draw(0,0) --++(6,-6);\draw(0,0)--++(-6,-6);
\draw(-1,-1)--+(-1,-1);\draw[dashed](-1,-1)--+(1,-1);
\draw(1,-1)--+(-1,-1);\draw(1,-1)--+(1,-1);
\draw(-2,-2)--+(-1,-1);\draw(-2,-2)--+(1,-1);
\draw(0,-2)[dashed]--+(-1,-1);\draw(0,-2)--+(1,-1);
\draw(2,-2)[dashed]--+(-1,-1);\draw(2,-2)--+(1,-1);
\draw(-3,-3)--+(1,-1);
\draw[dashed](-1,-3)--+(-1,-1);\draw[dashed](-1,-3)--+(1,-1);
\draw(1,-3)--+(-1,-1);\draw(1,-3)--+(1,-1);
\draw(3,-3)[dashed]--+(-1,-1);
\draw(-2,-4)--+(1,-1);
\draw[dashed](0,-4)--+(-1,-1);\draw(0,-4)--+(1,-1);
\draw[dashed](2,-4)--+(-1,-1);
\draw(-1,-5)--+(1,-1);
\draw(1,-5)[dashed]--+(-1,-1);
\node[right] at (3,-2.5){$\xi'$};
\end{tikzpicture}
\caption{{Dashed lines represent minimal edges. $B_{\xi}(3,3)=a(ab)^9b$ and $B_{\xi'}=b(ba)^9a$.}}
\label{fig:ab9ba9}
\end{figure}

To prove (2), we want to focus on
 strings of alternating $a$'s and $b$'s that appear in the language of $\Sigma_\xi$ for some ordering $\xi$ of the Pascal graph and hence in a basic block $B_\xi(x,y)$ for some $x,y \geq 0$.
 So we provide a systematic way to ignore subblocks of the $B_\xi(x,y)$ that cannot be used, when $B_\xi(x,y)$ is concatenated with $B_\xi(x-1,y+1)$ or $B_\xi(x+1,y-1)$, to create a longer string $(ab)^j$ than has been created previously.
Specifically, for each $x,y \geq 1$ we let $Q_\xi ^L(x,y)$ (respectively $Q_\xi ^R(x,y)$) be the longest string in which $a$ and $b$ alternate that is a prefix of $B_\xi(x,y)$ (respectively suffix of $B_\xi(x,y)$).
For example, if $B_\xi(x,y)=ababa^2b$, then $Q_\xi^ L(x,y)=ababa$ and $Q_\xi ^R(x,y)=ab$.
We add the new symbol $*$ to our alphabet and define the ``condensed form'' of $B_\xi(x,y)$ to be $Q_\xi(x,y)=Q_\xi ^L(x,y) * Q_\xi ^R(x,y)$. (If the block $B_{\xi}(x,y)$ is made up entirely of alternating $a$'s and $b$'s there is no $*$ in $Q_\xi(x,y)$.)

We want to show that {if $\xi$ is an ordering such that $(ab)^9 \in \mathscr L(\Sigma_\xi)$, then  $(ba)^9 \notin \mathscr L(\Sigma_\xi$), and vice versa}; that is, no $B_\xi(x,y)$ can ever contain both $(ab)^9$ and $(ba)^9$. We begin with two observations.
The first is that for $x\geq 3$, $Q_\xi^L(x,1)$ cannot have length greater than 3. (Specifically, $Q_\xi^L(x,1)$ can be $aba$ but cannot be any longer.) The same is true for $Q_\xi^R(x,1)$, and similarly for $Q_\xi^L(1,y)$ and $Q_\xi^R(1,y)$ for $y\geq 3$.
The second observation is that if a basic block $B_1$ has condensed form $C_1^L*C_1^R$ and a second basic block $B_2$ has condensed form $C_2^L*C_2^R$, then the concatenation $B_1B_2$ has condensed form $C_1^L*C_2^R$.
Therefore, in order to determine the highest power of $(ab)^j$ that can be formed by a concatenation of any two basic blocks one needs to look only far enough down to a level $n$ for which the condensed forms of all the basic blocks $B_{\xi}(x,y)$ with $x+y=n$ and $x,y\geq 1$ contain the symbol *.
We can then determine the highest power of $(ab)^j$ in $\mathscr L (\Sigma_\xi)$ by considering all possible concatenations $Q_\xi^R(i,j)Q_\xi^L(k,l)$ with $i+j=k+l=n$ and $i,j,k,l\geq 1$.

Without loss of generality we assume that $B_\xi(1,1)=ab$. Then depending on the choice among the four partial orders on the edges with range in $\mathscr{V}_3$ there are the following possibilities for the basic blocks at level $3$: $B_\xi(3,0)=a$, $B_\xi(2,1)=a^2b$ or $aba$, $B_\xi(1,2)=ab^2$ or $bab$ and $B_\xi(0,3)=b$.
 It is {straightforward to check that for three of the four possible orderings all the condensed blocks $Q_\xi(x,y)$ with $x+y=4$ and $x,y\geq 1$ necessarily} contain the symbol $*$.
 Since every basic block at level 4 has length no more than 6, {for these three orderings} it is impossible that $(ab)^9$ or $(ba)^9$ is contained within any of these blocks or, {since they all contain a $*$,} any of their concatenations.
 Hence the only way that $(ab)^9$ or $(ba)^9$ could be formed is in the case {of the fourth ordering at level 3}, when $B_\xi(2,1)=aba$ and $B_\xi(1,2)=bab$.
 In this case the possibilities are $Q_\xi(3,1)=a*aba$ or $aba*a$, $Q_\xi(2,2)=(ab)^3$ or $(ba)^3$, and $Q_\xi(1,3)=bab*b$ or $b*bab$. For each of the 8 possible orderings {at the vertices $(x,y)$ with $x+y=5$ and $x,y\geq 1$ the condensed blocks $Q_\xi(x,y)$ all necessarily contain a $*$.} Therefore 5 is the last level at which we need to look in order to  determine which orderings give rise to either $(ab)^9$ or $(ba)^9$.

  The only two blocks on level 5 that could concatenate to give a word of length $18$ are $B_\xi(3,2)$ and $B_\xi(2,3)$, both of length 10.
 Further, since each of $Q_\xi(3,2)$ and $Q_\xi(2,3)$ contains a $*$, the maximum length of $Q_\xi^L(3,2), Q_\xi^R(3,2), Q_\xi^L(2,3), Q_\xi^R(2,3)$ is 9.
 In order to form either $(ab)^9$ or $(ba)^9$, we would need both $Q_\xi^L(3,2)$ and $Q_\xi^R(2,3)$ to be of length 9, or both $Q_\xi^R(3,2)$ and $Q_\xi^L(2,3)$ to be of length 9.
 {In the situation to which we have reduced, there is exactly one ordering down to level 5 for which both $Q_\xi^L(3,2)$ and $Q_\xi^R(2,3)$ have length 9; for this ordering $Q_\xi(3,2)=(ab)^4a*a$ and $Q_\xi(2,3)=b*b(ab)^4$.
 In this case it is possible to concatenate $B_\xi(3,2)$ and $B_\xi(2,3)$ to form the word $(ba)^9$.
 However, because these two blocks contain the longest possible strings of alternating $a$'s and $b$'s {down to level 5}, there is no way, given this ordering, ever to create the word $(ab)^9$.}
 Similarly, there is exactly one case for which $Q_\xi^R(3,2)$ and $Q_\xi^L(2,3)$ are of length 9, namely when {$Q_\xi^R(3,2)=a*a(ba)^4$ and $Q_\xi^L(2,3)=(ba)^4b*b$.}
 The string $(ab)^9$ can be made with the ordering corresponding to this case, but $(ba)^9$ cannot.
 Therefore, any ordering which gives rise to the block $(ab)^9$ cannot also give rise to $(ba)^9$, and vice versa.
\end{proof}

\begin{thm} There is no dense orbit in $\Sigma$. \end{thm}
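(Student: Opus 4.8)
The plan is to exploit the ``forbidden pair'' of words supplied by Lemma~\ref{lem:ba^9}. Part (1) of that Lemma exhibits orderings realizing the basic blocks $a(ab)^9b$ and $b(ba)^9a$, so both $(ab)^9$ and $(ba)^9$ belong to $\mathscr L(\Sigma)=\cup_\xi \mathscr L(\Sigma_\xi)$. On the other hand, part (2) says that these two words can never coexist in the output of a single fixed ordering. The contradiction I aim for is that a transitive point would be forced to display both words, hence to place both of them into one language $\mathscr L(\Sigma_\xi)$.

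Concretely, I would argue as follows. Recall that in a subshift a point has dense two-sided orbit under $\sigma$ if and only if every word of the language occurs in it. So suppose, for contradiction, that some $\omega\in\Sigma$ has dense orbit. Since $(ab)^9,(ba)^9\in\mathscr L(\Sigma)$, both of these words occur as subblocks of $\omega$. Choose a single finite window $W$ of $\omega$ long enough to contain an occurrence of $(ab)^9$ and an occurrence of $(ba)^9$. Then $W\in\mathscr L(\Sigma)$, and by the defining equality $\mathscr L(\Sigma)=\cup_\xi\mathscr L(\Sigma_\xi)$ there is a single ordering $\xi$ with $W\in\mathscr L(\Sigma_\xi)$. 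Consequently both $(ab)^9\in\mathscr L(\Sigma_\xi)$ and $(ba)^9\in\mathscr L(\Sigma_\xi)$, which is the situation ruled out by Lemma~\ref{lem:ba^9}.

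The only delicate point---and the step I expect to require the most care---is the passage from the window $W$ to a genuine contradiction with Lemma~\ref{lem:ba^9}. The literal statement of part (2) concerns a single basic block $B_\xi(x,y)$, whereas $W$ lies in $\mathscr L(\Sigma_\xi)$ and may straddle boundaries between consecutive basic blocks, so one cannot simply assert that $W$ sits inside one $B_\xi(x,y)$. What rescues the argument is that the proof of Lemma~\ref{lem:ba^9}(2) already reasons at the level of the whole language $\mathscr L(\Sigma_\xi)$: by tracking the condensed forms $Q_\xi(x,y)=Q_\xi^L(x,y)*Q_\xi^R(x,y)$, together with the fact that the concatenation of two condensed forms $C_1^L*C_1^R$ and $C_2^L*C_2^R$ reduces to $C_1^L*C_2^R$, it bounds the longest alternating strings obtainable from \emph{any} concatenation of basic blocks and thereby shows outright that a fixed $\xi$ cannot produce both $(ab)^9$ and $(ba)^9$ anywhere in $\mathscr L(\Sigma_\xi)$. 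Invoking this language-level conclusion of the Lemma closes the argument, and no additional recognizability or unique-factorization input is needed.
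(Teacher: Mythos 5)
Your proof is correct and follows essentially the same route as the paper's: both arguments deduce from Lemma~\ref{lem:ba^9} that $(ab)^9$ and $(ba)^9$ lie in $\mathscr L(\Sigma)$ yet cannot both occur in any single sequence of $\Sigma$, so no orbit can be dense. The only difference is cosmetic, in how the last step is closed: the paper asserts that any block containing both words would have to sit inside a single basic block $B_\xi(x,y)$, while you instead invoke the language-level statement that a fixed ordering $\xi$ cannot produce both words anywhere in $\mathscr L(\Sigma_\xi)$ --- which is indeed what the proof of Lemma~\ref{lem:ba^9}(2) actually establishes.
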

\begin{proof}
By the Lemma, both of the blocks $(ab)^9$ and $(ba)^9$ are in $\mathscr L(\Sigma)=\cup_\xi \mathscr L(\Sigma_\xi)$. But no sequence in $\Sigma$ can contain both of these blocks, since any block containing both of them would have to be a subblock of $B_\xi(x,y)$ for some $\xi, x$, and $y$.
 \end{proof}

\begin{prop}\label{prop:periodicpoints}
The only periodic points in the big subshift $(\Sigma,\sigma)$ are the two fixed points $a^\infty$ and $b^\infty$.
\end{prop}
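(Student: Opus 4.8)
The plan is to reduce everything to a statement about basic blocks: if $w$ is any non-constant word on $\{a,b\}$ of length $p\geq 2$, then for all sufficiently large $N$ the power $w^N$ is not a factor of any basic block $B_\xi(x,y)$, for any ordering $\xi$. Granting this, suppose $\omega\in\Sigma$ is periodic of least period $p$. If $p=1$ then $\omega$ is constant, so $\omega\in\{a^\infty,b^\infty\}$; these are exactly the two fixed points. If $p\geq 2$, write $\omega=w^\infty$ with $w$ non-constant; since $\mathscr L(\Sigma)=\cup_\xi\mathscr L(\Sigma_\xi)$, every power $w^N$ lies in $\mathscr L(\Sigma)$ and hence is a factor of some $B_\xi(x,y)$, contradicting the claim. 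Two elementary facts drive the claim. First, a basic block $B_\xi(x,y)$ has $a$-density exactly $x/(x+y)$, independently of $\xi$, since it contains $C(x+y-1,x-1)$ symbols $a$ out of $C(x+y,x)$ total. Second, $\omega=w^\infty$ has $a$-density $f:=\#_a(w)/p\in(0,1)$ with bounded discrepancy: if $g(n)=\#_a(\omega_0\cdots\omega_{n-1})-fn$, then $g$ is periodic of period $p$, so it is bounded by some $D$, and every factor $v$ of $\omega$ satisfies $\bigl|\#_a(v)-f|v|\bigr|\leq 2D=:C_0$, i.e. its density lies within $C_0/|v|$ of $f$.

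Now fix a large $N$ and suppose $u:=w^N$ is a factor of $B_\xi(x,y)$. Every run of $a$'s and of $b$'s in $u$ has length at most $M:=\max(M_a,M_b)$, where $M_a,M_b$ are the longest runs occurring in $\omega$. Let $m^*$ be the largest level for which $u$ contains, as a factor, a complete level-$m^*$ basic block from the canonical decomposition of $B_\xi(x,y)$. Then $u$ is contained in at most two consecutive level-$(m^*+1)$ blocks, so $|u|\leq 2\cdot 2^{m^*+1}$ and hence $m^*\to\infty$ as $N\to\infty$. Let $B_\xi(i,j)$, with $i+j=m^*$, be such a complete block inside $u$. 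Being a factor of $u$, it has all runs of length $\leq M$; but a basic block of density $i/m^*$ too close to $0$ or $1$ must contain a long run (if $i/m^*\leq\delta$, the symbols $a$ are so sparse that the at most $C(m^*-1,i-1)+1$ runs of $b$ have average length $\approx(m^*-i)/i\geq(1-\delta)/\delta$, forcing a $b$-run longer than $M$ once $\delta=\delta(M)$ is small; symmetrically near $1$). Hence $\delta\leq i/m^*\leq 1-\delta$, and in particular $C(m^*,i)$ and $C(m^*-1,i-1)$ grow exponentially in $m^*$.

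To finish I would decompose $B_\xi(i,j)$ one level down into its two halves $B_\xi(i-1,j)$ and $B_\xi(i,j-1)$, both of which are factors of $u$, hence of $\omega$. Their exact densities $(i-1)/(m^*-1)$ and $i/(m^*-1)$ differ by $1/(m^*-1)$. On the other hand, each half is a factor of $\omega$ of length at least $C(m^*-1,i-1)$, which is exponential in $m^*$, so by the bounded-discrepancy fact each of the two densities lies within $C_0/C(m^*-1,i-1)=o(1/m^*)$ of $f$, and hence within $o(1/m^*)$ of each other. This contradicts their exact gap $1/(m^*-1)$ once $m^*$ is large. Therefore $w^N$ cannot be a factor of any basic block for $N$ large, which proves the claim and the Proposition.

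The main obstacle is the run-length-to-density step in the second paragraph: one must check, uniformly over all orderings $\xi$, that a complete basic block all of whose runs are bounded by $M$ has density bounded away from $0$ and $1$ (equivalently, is exponentially long), so that the two half-blocks produced at the end are genuinely long. This is the only place where the detailed distribution of $a$'s and $b$'s inside $B_\xi(i,j)$ is used; the rest is just the interplay between the exact density $i/(i+j)$ of basic blocks and the bounded discrepancy of the periodic sequence $\omega$.
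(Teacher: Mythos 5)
Your route is genuinely different from the paper's, and its novel ingredient is correct: the paper eliminates periodic points by showing every sufficiently long block of $\mathscr L(\Sigma)$ contains a long run (because every \emph{interior} basic block of level greater than $4r$ contains $B_\xi(2r,1)$ or $B_\xi(1,2r)$, hence $a^r$ or $b^r$), whereas you play the exact $a$-density $i/(i+j)$ of basic blocks, which is independent of $\xi$, against the bounded discrepancy of a periodic sequence. Your closing step --- the two halves $B_\xi(i-1,j)$ and $B_\xi(i,j-1)$ have densities exactly $1/(m^*-1)$ apart, yet both must lie within $C_0/C(m^*-1,i-1)$, which is exponentially small, of $f$ --- is a clean contradiction, and the reduction to ``$w^N$ is not a factor of any $B_\xi(x,y)$, uniformly in $\xi$'' is the right statement to prove, since a periodic point of $\Sigma$ may draw different factors from different orderings.

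The genuine gap is the sentence ``Let $B_\xi(i,j)$, with $i+j=m^*$, be such a complete block inside $u$.'' The level-$m^*$ factorization of a basic block (or of any coding) contains not only interior blocks $B_\xi(i,j)$ with $i,j\geq 1$ but also the boundary blocks $B_\xi(m^*,0)=a$ and $B_\xi(0,m^*)=b$, which are single letters, and your definition of $m^*$ only guarantees that $u$ contains \emph{some} complete level-$m^*$ block --- possibly only single-letter ones. For a single letter the implication you yourself flag as the ``main obstacle'' is simply false (all runs are trivially $\leq M$, yet the density is exactly $0$ or $1$), the binomials $C(m^*-1,i-1)$ are not defined, and the two-halves comparison cannot be set up; so the obstacle is not, as you suggest, a uniformity-in-$\xi$ check (your pigeonhole already handles interior blocks uniformly, since letter counts do not depend on $\xi$), but the interiority of the block itself. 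Nothing in your argument excludes the scenario in which $u$ is (a long suffix of an interior block)(one or more single-letter blocks)(a long prefix of another interior block), which contains complete single-letter blocks at level $m^*$ but no complete interior one; such adjacencies do occur in factorizations. Closing this hole requires showing that a long factor with bounded runs must contain a complete \emph{interior} block of unboundedly high level or else a long constant run --- exactly the role played in the paper's proof by the alternative ``contains a basic block of length greater than $1$ from level $4r$ \dots or one of the strings $a^M$ or $b^M$,'' which rests on the structural fact that a string of consecutive single-letter blocks in a factorization cannot alternate between $a$ and $b$ more than once. With that fact in hand, your density machinery replaces only the easier half of the paper's argument; without it, the proof does not close.
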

\begin{proof}
Suppose that $\omega$ is a periodic sequence. If $\omega$ is not one of the two fixed points, then the runs of $a$ and $b$ in $\omega$ are of bounded length: there is $r < \infty$ such that if $a^j$ or $b^j$ appear in $\omega$, then $j < r$.
For each ordering $\xi$ the two basic blocks $B_\xi(2r,1)$ and $B_\xi(1,2r)$ each contain strings of length $r$ of $a$ or $b$, respectively.
If $n > 4r$, then every basic block $B_\xi(x,y)$ with $x+y=n$ and $x,y \geq 1$, which is a concatenation of basic blocks at level $2r+1$, contains one of the basic blocks $B_\xi(1,2r)$ or $B_\xi(2r,1)$ from level $2r+1$.
Denote by $M$ the maximum of the lengths of basic blocks at level $4r$.
Every block in the language $\mathscr L (\Sigma)$ of length greater than $3M$ must contain either a basic block of length greater than $1$ from level $4r$, and hence $a^j$ or $b^j$ for some $j\geq r$, or one of the strings $a^M$ or $b^M$. The later cannot occur since $M>r$. Thus no block in $\omega$ of length greater than $3M$ can be in $\mathscr L (\Sigma)$, and $\omega$ cannot be in $\Sigma$ unless it is one of the two fixed points.
\end{proof}

\begin{lem}\label{lem:fixedpts}
The small subshift $\cap_\xi \Sigma_\xi$ contains the points $a^\infty, b^\infty, a^\infty.ba^\infty$, and $b^\infty .ab^\infty$.
\end{lem}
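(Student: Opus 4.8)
The plan is to reduce membership in $\cap_\xi\Sigma_\xi$ to a statement about the language $\cap_\xi\mathscr L(\Sigma_\xi)$ and then to exhibit the relevant words inside basic blocks. A point lies in $\cap_\xi\Sigma_\xi$ exactly when every one of its finite subblocks lies in $\mathscr L(\Sigma_\xi)$ for every ordering $\xi$. The finite subblocks of $a^\infty$ are the words $a^n$; those of $a^\infty.ba^\infty$ are the $a^n$ together with the words $a^{i}ba^{j}$; and the subblocks of $b^\infty$ and $b^\infty.ab^\infty$ are obtained from these by interchanging $a$ and $b$. Transposing the Pascal diagram across the diagonal $x=y$ realizes the exchange $a\leftrightarrow b$ and induces a bijection $\xi\mapsto\tilde\xi$ on orderings under which $\mathscr L(\Sigma_{\tilde\xi})$ is the image of $\mathscr L(\Sigma_\xi)$ under $a\leftrightarrow b$; so it is enough to show that, for every $\xi$, each $a^n$ and each $a^{i}ba^{j}$ belongs to $\mathscr L(\Sigma_\xi)$. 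Since every word of $\mathscr L(\Sigma_\xi)$ occurs inside a basic block $B_\xi(x,y)$ (for $n=x+y$ large the $1$-coding of any orbit is tiled by level-$n$ basic blocks, and a fixed finite window eventually lies inside one tile), it suffices to produce these words inside basic blocks, and for $a^{i}ba^{j}$ it is enough, writing $N=\max\{i,j\}$, to produce $a^Nba^N$.

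For the runs $a^n$, recall that for every $\xi$ the block $B_\xi(x,1)$ has the form $a^{i_x}ba^{j_x}$ with $i_x+j_x=x$; taking $x=2n$ forces $\max\{i_x,j_x\}\ge n$, so $a^n$ occurs in $B_\xi(x,1)$.

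For $a^Nba^N$, set $C_k=B_\xi(k,1)=a^{i_k}ba^{j_k}$. As recorded earlier, $i_k$ and $j_k$ are nondecreasing in $k$ and $i_k+j_k=k$, so at least one of the sequences $(i_k)$, $(j_k)$ is unbounded. If both are unbounded, then for $k$ large we have $i_k,j_k\ge N$ and the single block $C_k=a^{i_k}ba^{j_k}$ already contains $a^Nba^N$. The remaining case is that exactly one of them, say $(j_k)$ (the other case being symmetric), is bounded; then for all large $k$ the block $C_k$ is ``left-heavy'', i.e. $i_k\ge N$. Here a single $C_k$ never suffices, and I would instead work inside $B_\xi(x,2)$: by the defining recursion, $B_\xi(x,2)$ is obtained from the central letter $b=B_\xi(0,2)$ by attaching $C_1,C_2,\dots,C_x$ in this order, each either to the front or to the back according to $\xi$. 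Consequently the pieces appear in the finished block in ``valley'' order in the index $k$ (decreasing along a left fan of prepended pieces, then increasing along a right fan of appended pieces), so the large-index pieces form two contiguous runs, one at each end. For $x$ large there are at least three large-index pieces, so by pigeonhole two of them are directly adjacent; as all large pieces are left-heavy, the junction of two adjacent ones, say $C_pC_q$, reads $a^{i_p}b\,a^{j_p+i_q}b\,a^{j_q}$, and the first $b$ is flanked by $a^{i_p}$ on the left and by a run of $a$'s of length $j_p+i_q\ge i_q$ on the right, both of length $\ge N$. Thus $a^Nba^N\in\mathscr L(\Sigma_\xi)$.

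I expect the main obstacle to be exactly this last case: when $\xi$ eventually attaches every new $a$-edge to the same side, the unique $b$ in each $B_\xi(x,1)$ has a long run of $a$'s on only one side, so the two-sided pattern $a^Nba^N$ cannot be found in any single level-one block and must be manufactured at a junction between two pieces inside $B_\xi(x,2)$. Making this precise requires pinning down the arrangement of the pieces $C_k$ well enough to guarantee that two large-index (hence equally ``heavy'') pieces fall next to each other; the valley/unimodal description of prepend-append builds is what I would use, and verifying that the two outermost pieces on a single fan are genuinely adjacent and of the same heaviness is the step that needs the most care.
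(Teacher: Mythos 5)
Your proof is correct and follows essentially the same route as the paper: both split into cases according to whether both runs of $a$'s in $B_\xi(x,1)=a^{i_x}ba^{j_x}$ grow unboundedly, and in the one-sided case both use a pigeonhole argument inside $B_\xi(x,2)$ (the paper via its decomposition into $B_\xi(x-3,2)$, $B_\xi(x-2,1)$, $B_\xi(x-1,1)$, $B_\xi(x,1)$; you via the equivalent ``valley'' structure) to find two adjacent level-one blocks whose junction produces $a^Nba^N$. The symmetry reduction for the $b$-versions is also the paper's.
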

\begin{proof}
If $\xi(x,1)$ is not eventually constant, then for every $r>0$ there is an $x$ such that $B_\xi(x,1)=a^iba^j$ and $i,j>r$. Then $a^\infty.ba^\infty$ and $a^\infty$ are in $\Sigma_\xi$.

Suppose then that $\xi(x,1)$ is eventually constant, say $\xi(x,1)=0$ for all $x>K$. Then $B_\xi(x,1)=a^{x-m}ba^m$ for some $m\leq x$ and all $x>K$.
{The block $B_\xi(x,2)$ is a concatenation of the blocks $B_\xi(x-3,2)$, $B_\xi(x-2,1)$, $B_\xi(x-1,1)$ and $B_\xi(x,1)$ in some order, each with multiplicity one.} By the the pigeonhole principle it is necessary that either $B_\xi(x-2,1)$ is concatenated with $B_\xi(x-1,1)$, or $B_\xi(x-2,1)$ is concatenated with $B_\xi(x,1)$, or $B_\xi(x-1,1)$ is concatenated with $B_\xi(x,1)$.  It follows that for $x>K+3$ the block $a^{x-m-2}ba^{x-2}$ necessarily appears in $B_\xi(x,2)$. Letting $x$ tend to infinity shows that $a^\infty.ba^\infty$ and $a^\infty$ are in $\Sigma_\xi$.

That $b^\infty.ab^\infty, b^\infty \in \Sigma_\xi$ follows by symmetry.
\end{proof}

\begin{thm}\label{thm:smallsubshift}
The small subshift $\cap_\xi \Sigma_\xi$ consists of only the two fixed points and the orbits of the points $a^\infty . b
a^\infty$ and $b^\infty . a b^\infty$.
\end{thm}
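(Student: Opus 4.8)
The plan is to prove the two inclusions separately. For $\supseteq$, note first that $\cap_\xi \Sigma_\xi$ is closed and shift-invariant, being an intersection of closed shift-invariant sets. By Lemma~\ref{lem:fixedpts} the four points $a^\infty$, $b^\infty$, $a^\infty.ba^\infty$, and $b^\infty.ab^\infty$ lie in every $\Sigma_\xi$ and hence in the intersection; shift-invariance then gives the full orbits of $a^\infty.ba^\infty$ and $b^\infty.ab^\infty$, and these together with the two fixed points already form a closed set. Thus the claimed set is contained in $\cap_\xi \Sigma_\xi$.

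For the reverse inclusion I would first reformulate the target. A point of $\{a,b\}^\Z$ has \emph{at most one} $b$ exactly when it is $a^\infty$ or a shift of $a^\infty.ba^\infty$, and symmetrically for $a$; hence the claimed set is precisely $\{\omega:\omega$ has at most one $a$ or at most one $b\}$. So it suffices to show that no $\omega \in \cap_\xi \Sigma_\xi$ has \emph{both} at least two $a$'s and at least two $b$'s. Since $\cap_\xi \Sigma_\xi \subseteq \Sigma$, Proposition~\ref{prop:periodicpoints} already removes all periodic such $\omega$ (e.g.\ $(ab)^\infty$), so I may assume $\omega$ is aperiodic. Because $\omega \in \Sigma_\xi$ forces every finite factor of $\omega$ to lie in $\mathscr L(\Sigma_\xi)$, and $\mathscr L(\Sigma_\xi)$ consists of the subwords of the basic blocks $B_\xi(x,y)$, it is enough to produce, for each such $\omega$, a single ordering $\xi$ and a finite factor $W$ of $\omega$ occurring in no $B_\xi(x,y)$.

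To organize this I would classify $\omega$ by the behaviour of its two tails. If both tails are eventually $a^\infty$, then all $b$'s of $\omega$ lie in a fixed finite word $w$ with at least two $b$'s, and $\omega$ contains $a^i w a^j$ for all $i,j$; the candidate killing ordering is the uniform left-to-right ordering $\xi\equiv 0$, for which $C_x=a^x b$, $D_y=ab^y$, and $B(x,y)=B(x,y-1)B(x-1,y)$. From this recursion one reads off that inside a basic block the maximal $a$-runs between consecutive $b$'s decrease monotonically, so two $b$'s cannot both be flanked by arbitrarily long $a$-runs unless the $a$-run between them is also long; this should exclude $a^i w a^j$ (with $i,j$ large) and hence remove $\omega$ from $\Sigma_0$. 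The case of two tails eventually $b^\infty$ is symmetric, using $\xi\equiv 1$. If one tail is $a^\infty$ and the other $b^\infty$, then $\omega$ contains $a^2b^2$; here I would show directly from the same recursion that $B(x,y)$ always begins with $a^x b$ and ends with $a b^y$, so that long $a$- and $b$-runs are never adjacent and $a^2b^2$ occurs in no block of $\Sigma_0$, again a contradiction (the mirror case uses $\xi\equiv 1$).

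The remaining and hardest case is a non-convergent tail, i.e.\ $\omega$ has unboundedly long runs of one letter that are nevertheless flanked on both sides by the other letter (for example a right tail $\cdots ba^2ba^3ba^4\cdots$). Here the uniform orderings do not suffice, since both $\Sigma_0$ and $\Sigma_1$ contain isolated long runs $ba^Kb$ and $ab^Ka$. For such $\omega$ I would exploit that it presents only finitely many run-length ``scales'' in any bounded window and build a tailored ordering that is eventually left-to-right (respectively right-to-left) along the relevant diagonals, running the inductive block analysis of Lemma~\ref{lem:fixedpts} \emph{in reverse} to force each basic block to contain only one controlled long-run island, thereby excluding the two-island factor that $\omega$ necessarily contains. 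The main obstacle is exactly this: constructing the killing ordering and proving, by induction on the level, that its basic blocks avoid the required factor. This needs a careful description of how long runs of a letter propagate through $B_\xi(x,y)=B_\xi(x,y-1)B_\xi(x-1,y)$ (or its reverse), a verification that $\Sigma_0$ really omits the two-$b$ factors claimed above, and a check that the tail classification is exhaustive; I expect the bookkeeping in the non-convergent case to be where the genuine difficulty lies.
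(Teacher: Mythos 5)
Your containment $\supseteq$ and your reformulation of the target set (the four orbits are exactly the sequences with at most one $a$ or at most one $b$) are correct, and the per-$\omega$ strategy (for each bad $\omega$ exhibit one ordering $\xi$ with some factor of $\omega$ outside $\mathscr L(\Sigma_\xi)$) is logically sound. But the proposal is not a proof: its decisive case is left open. You explicitly defer every $\omega$ with a non-convergent tail to a ``tailored ordering'' to be built by running the block analysis ``in reverse,'' and you acknowledge that this is where the real difficulty lies. That case is not peripheral; it contains essentially all of $\{a,b\}^{\Z}$ (for instance every sequence in which both letters occur with bounded run lengths), and it is exactly the case that consumes the paper's entire argument. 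The paper's route is genuinely different and avoids per-$\omega$ tailoring altogether: it fixes \emph{two} orderings once and for all --- the $\xi$ and $\xi'$ of Lemma \ref{lem:ba^9}, eventually left-to-right and eventually right-to-left --- writes every basic block as a concatenation of $F_j=B_\xi(j,2)$, $G_k=B_\xi(2,k)$, $H=B_\xi(3,3)$ (respectively the primed blocks), and proves that for $l>6$ every legal extension of $ba^lb$ in $\mathscr L(\Sigma_\xi)$ is illegal in $\mathscr L(\Sigma_{\xi'})$, and similarly for $ab^la$. Any $\omega$ outside the four orbits contains three switches, and a window argument around those switches forces an occurrence of $ba^7b$ or $ab^7a$, whose required extensions in the two languages are incompatible; hence already $\Sigma_\xi\cap\Sigma_{\xi'}$ equals the four orbits. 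Without something playing the role of this uniform incompatibility, or an actual construction of your tailored orderings with the induction carried out, the theorem is not established.

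There is also a concrete error in your mixed-tails case: a sequence with one tail $a^\infty$ and the other $b^\infty$ need not contain $a^2b^2$. For example $\omega=a^\infty.bab^\infty$ has exactly those tails, but its factors $a^ibab^j$ never contain $a^2b^2$; yet this $\omega$ must be excluded, since it is not in one of the four orbits. (It can be excluded from $\Sigma_0$, but by a different factor: in $\Sigma_0$ a maximal run $a^i$ followed by $b$ must be followed after that $b$ by at least $a^{i-1}$, so for instance $a^3bab\notin\mathscr L(\Sigma_0)$.) Your run-propagation claims need this level of precision before even the easy cases close: ``maximal $a$-runs decrease monotonically inside a basic block'' is false as stated (in $B(3,3)=a^3ba^2bab^2a^2bab^2ab^3$ the run lengths go $3,2,1,2,1$), and ``long $a$-runs and long $b$-runs are never adjacent in $\Sigma_0$'' is only half true: the junctions $ab^{y-1}\,|\,a^{x-1}b$ inside $B(x,y)=B(x,y-1)B(x-1,y)$ make $b^pa^q$ legal for arbitrarily large $p,q$, and only the order $a^2b^2$ is forbidden. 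These asymmetries are easy to get wrong, which is precisely why the paper computes the block structure of two fixed orderings explicitly rather than arguing case-by-case on tails.
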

\begin{proof}
We fix the orderings $\xi$ and $\xi'$ used in Lemma \ref{lem:ba^9}:
\begin{equation}
\begin{gathered}
\xi(1,1)=\xi'(1,1)=0, \xi(1,2)=\xi'(1,2)=\xi(2,1)=\xi'(2,1)=1,\\
\xi(2,2)=1,\xi'(2,2)=0; \xi(1,3)=\xi(3,1)=0, \xi'(1,3)=\xi'(3,1)=1\\
\text{for all }{ (x,y)  \text{ with }} x+y \geq {5}, \, \xi(x,y)=0;\\
\text{for all } { (x,y)  \text{ with }} x+y \geq {5}, \, \xi'(x,y)=1.
\end{gathered}
\end{equation}

For notational ease, let $B_{\xi}(j,2)$ be denoted {by} $F_j$,  let $B_{\xi}(2,k)$ be denoted {by} $G_k$, and { let} $B_{\xi}(3,3)=a(ab)^9b=H$.  Likewise,  let $B_{\xi'}(j,2)$ be denoted {by} $F'_j$,  let $B_{\xi'}(2,k)$ be denoted {by} $G'_k$, and {let} $B_{\xi'}(3,3)=b(ba)^9a=H'$.
For all $ j, k \geq 4$,

\begin{equation}
\begin{aligned}
F_j=B_\xi(j,2)&=a^{j-1}ba^{j-1}ba^{j-2}ba^{j-3}b\dots a^4ba^3(ba)^4\\
G_k=B_\xi(2,k)&=(ba)^4b^3ab^4ab^5a\dots b^{k-1}ab^{k-1}
\end{aligned}
\end{equation}
{and}
\begin{equation}
\begin{aligned}
F'_j=B_{\xi'}(j,2)&=(ab)^4a^3ba^4ba^5b\dots b a^{j-1}ba^{j-1}\\
G'_k=B_{\xi'}(2,k)&=b^{k-1}ab^{k-1}ab^{k-2}ab^{k-3}\dots b^5ab^4ab^3(ab)^4.
\end{aligned}
\end{equation}
Note that if  $y\leq 1$, $B_{\xi}(x,y)$ and $B_{\xi'}(x,y)$ have at most one $b$; and if $x\leq 1$, then each of these blocks has at most one $a$.

We first determine ways that for $l>6$, $ba^lb$ may occur in $B_{\xi}(x,y)$, {for any $(x,y)$}.  {Because of the preceding observation,} it suffices to consider only  $x,y\geq 2$ .
If $x+y\geq 6$, $B_{\xi}(x,y)$ can be decomposed into a concatenation of $F_j$, $G_k$, and $H$ {with} $j,k \geq 4$ {as follows. Since $\xi(x,y)=0$ for all $x+y\geq 5$,  $B_{\xi}(x,y)=B_{\xi}(x,y-1)B_{\xi}(x-1,y)$.}
Continue factoring in this manner, {reducing $x$ and $y$,} until each block {in the factorization} is one of {$B_\xi(3,3)=H, B_\xi(j,2)=F_j$, or $B_\xi(2,k)=G_k$, for some $j,k \geq 4$}. The {concatenation {that we arrive at} must have the} following structure. $F_4$ can be followed {only} by $H$.
For $j>4$, $F_j$ can be followed {only} by $F_{j-1}$. $G_4$ can be preceded {only} by $H$. For $k>4$, $G_k$ can be preceded {only} by $G_{k-1}$. Note that for all $k,j\geq 4$ the concatenation $G_kF_j$ does occur in some $B_\xi(x,y)$, for example in $B_{\xi}(j+1,k+1)$.

{None of the blocks $H, F_j, G_k$ end with $a^2$; for all $l>2$, $k \geq 4$, $G_k$ and $H$ do not contain $a^l$.} So {if $l>6$ and $ba^lb$ appears in $B_\xi(x,y)$, then} the majority of the $a^l$ portion of $ba^lb$ must {appear within one of the factors $F_j$ of $B_\xi(x,y)$ for some $j \geq 4$.} We can then see that $ba^lb$ appears in one of {only} four possible ways:
 \begin{enumerate}
 \item across {a} concatenation $G_kF_{l+1}$ for {some $k \geq 4$,}
 \item across {a} concatenation $HF_{l+1}$,
 \item across {a} concatenation $F_{l+1}F_l$,
 \item entirely within {a} block $F_j$ for {some} $j>l$.
 \end{enumerate}
 In the first two cases, {the appearance of} $ba^lb$ {is within the block} $ba^lba^lba^{l-1}b$, while in the second two cases the appearance of $ba^lb$ is {within the block} $ba^lba^{l-1}b$.  {Therefore, $ba^lb$ appears in any sequence in $\Sigma_{\xi}$ only as a subblock of one of the two longer blocks $ba^lba^lba^{l-1}b$ or $ba^lba^{l-1}b$.}

We next determine ways that for $l>6$, $ba^lb$ may occur in $B_{\xi'}(x,y)$, for any $(x,y)$.   Again, it suffices to consider only $x,y\geq 2$.  If $x+y\geq 6$,  $B_{\xi'}(x,y)$  can  be decomposed into a concatenation of $F'_j$, $G'_k$ and $H'$ {with} $j, k \geq 4$ {as follows}.
{Since $\xi'(x,y)=1$ for all $x+y\geq 5$,  $B_{\xi'}(x,y)=B_{\xi'}(x-1,y)B_{\xi'}(x,y-1)$.}
Continue factoring in this manner, {reducing $x$ and $y$,} until each block {in the factorization} is one of {$B_\xi'(3,3)=H', B_\xi'(j,2)=F'_j$, or $B_\xi'(2,k)=G'_k$, for some $j,k \geq 4$}.
The {concatenation {that we arrive at} must have the} following structure.  $F'_4$ can be preceded only by $H'$.  For $j>4$, $F'_j$ can be preceded only by $F'_{j-1}$.  $G'_4$ can be followed only by $H'$. For $k>4$, $G'_k$ can be followed only by $G_{k-1}$.
Note that for all $k,j\geq 4$ the concatenation $F'_jG'_k$ does occur in some $B_{\xi'}(x,y)$, for example, in  $B_{\xi'}(k+1,j+1)$.

None of the blocks $H'$, $F'_j$, $G'_k$ begin with $a^2$; for all $l>2$, {$k \geq 4$},  $G'_k$ and $H'$ do not contain $a^l$. So if $l>6$ and $ba^lb$ appears in $B_{\xi'}(x,y)$, then the majority of the $a^l$ portion of $ba^lb$ must appear within one of the factors $F'_j$ of $B_{\xi'}(x,y)$ for some $j\geq 4$.   We can then see that $ba^lb$ appears in one of only four possible ways:
 \begin{enumerate}
 \item across a concatenation $F'_{l+1}G'_k$ for some $k\geq 4$,
 \item across a concatenation $F'_{l+1}H$,
 \item across a concatenation $F'_{l}F'_{l+1}$,
 \item entirely within a block $F'_j$ for some $j>l$.
 \end{enumerate}
 In the first case, the appearance of $ba^lb$ is within the block $ba^lba^lb^{k-1}a$ for some $k\geq 4$, while in the second case we see that the appearance of $ba^lb$ is within the block $ba^lba^lb^2ab$.  In the third case the appearance of $ba^lb$ is within the block $ba^{l-1}ba^lbab$.  In the final case if $j>l+1$ the appearance of $ba^lb$ is within the block $ba^{l-1}ba^{l}ba^{l+1}b$.  However, if $j=l+1$, $F'_{l+1}$ is not long enough to gather enough information about how $ba^lb$ is extended to the right and we must examine which blocks can follow $F'_{l+1}$.  These are $G'_k, H'$, and $F'_{l+2}$.  These give rise to the longer words $ba^lba^lb^{k-1}a$ for some $k\geq 4$, $ba^lba^lb^2a$, and $ba^{l-1}ba^lba^{l+1}b$, respectively.    Therefore, $ba^lb$ appears in any sequence in $\Sigma_{\xi'}$ only as a subblock of one of these four longer words.  Most significantly, we see that any extension of $ba^lb$ in $\mathscr{L}(\Sigma_{\xi})$ will not be in $\mathscr{L}(\Sigma_{\xi'})$.

 {We use a similar argument to determine ways that, for $l>6$, {$ab^la$} may occur in any $B_{\xi}(x,y)$ with $x,y\geq 2$. We find that
 {$ab^la$} appears in any sequence in $\Sigma_{\xi}$ only at the end of the longer block $ab^{l-1}ab^lab^la$ or as a subblock of  $ab^{l-1}ab^{l}a$.}

 {Similarly, for $l>6$, {$ab^la$} may occur in $B_{\xi'}(x,y)$ for $x,y\geq 2$ only as a subblock of one of the four longer blocks $ba^{j-1}b^lab^lab^{l-1}$ for some $j\geq 4$, $ba^2b^lab^lab^{l-1}a$, $babab^lab^{l-1}a$, or $ab^{l+1}ab^lab^{l-1}a$. Most significantly, we see that any extension of $ab^la$ in $\mathscr{L}(\Sigma_{\xi})$ will not be in $\mathscr{L}(\Sigma_{\xi'})$.}

{Let $\omega\in \Sigma_{\xi}\cap\Sigma_{\xi'}$ {and suppose} that $\omega$ is not in the orbit of {any} of {the sequences}
\[a^\infty,\hfill b^\infty, a^\infty . ba^\infty, \hfill b^\infty . ab^{\infty}.\] We will show that then a block $ba^lb$ or $ab^la$ with $l>6$ appears in $\omega$, contradicting what was just established above.}

Since $\omega= \dots \omega_{-2}\omega_{-1}.\omega_0\omega_1 \dots$ {is not in one of the four special orbits listed above, there are at least three distinct values of $i$ for which $\omega_{i}\neq \omega_{i+1}$, say for $i_1<i_2<i_3$;}  {in other words, $\omega$ contains at least 3 ``switches" from $a$ to $b$ or vice versa.}
{Let $S=\omega_{i_1}\dots\omega_{i_3+1}$ be such a block  which contains exactly three switches, so that $S$ has the form $ba^mb^na$ or $ab^na^mb$ for some $m,n\geq 1$.}
{Let $D$ = $\omega_{i_1-100}\dots\omega_{i_1}\dots\omega_{i_3+101}$; in} other words, {$D$} extends {in $\omega$} to the left and the right of the three {places $i_1,i_2,i_3$, where switches are known to occur}, by 100 entries. In addition, let {$E$} be the subblock of $\omega$ that extends $D$ by 20 letters to the left and 20 letters to the right.
 Since $E\in \mathscr{L}(\Sigma_{\xi})$, there is a block $B_{\xi}(x,y)$ {} that contains $E$ (with $x,y \geq 2$, since otherwise $B_\xi(x,y)$ contains only two switches). Moreover, {the block $H$ does not appear} in $\omega$, so $E$ does not contain the entirety of $H$ {as a subblock}.
 Therefore, {when $B_\xi(x,y)$ is written according to the above scheme as a concatenation of $H,F_j,G_k$ for certain choices of $j,k\geq 4$}, the portion of $B_{\xi}(x,y)$ containing $D$ does not overlap any part of {an appearance of $H$ in the factorization (meaning that the intervals of indices of $B_\xi(x,y)$ at which any copies of $D$ and $H$ appear are disjoint)}. In other words, $D$ is contained in its entirety within concatenations of the blocks $F_j$ and $G_k$ according to the structure given above.

Consider the { factorization of $B_\xi(x,y)$ as described above, with $j,k \geq 4$.} {The block $S=\omega_{i_1}\dots\omega_{i_3+1}$ must overlap some blocks  $F_j$ and $G_k$ of the factorization, and this can happen only with $j,k \geq 8$.}  This is because $F_7$ {can appear in any sequence in $\Sigma_\xi$ only as part of the string $F_7F_6F_5F_4H$;} but we know that $D$ does not overlap any portion of $H$, and the length of $F_7F_6F_5F_4$ is 100.
{Similarly, {$S$} cannot overlap $F_6,F_5,$ or $F_4$. And if
  $S$ overlaps a block $G_k$ of the factorization, it must happen with $k\geq 8$, since $G_7$ can occur in any sequence in $\Sigma_\xi$ only at the end of a string $HG_4G_5G_6G_7$, which also has length 100; and similarly for $G_6, G_5,$ and $G_4$.}

 As {mentioned} above, $S$ is either of the form $ba^mb^na$ or $ab^na^mb$. If either $m$ or $n$ is greater than 6, then $ba^lb$ or $ab^la$, with $l>6$, appears in {$S$}, a subblock of {$D$}, {a subblock of $\omega$.}

 {Suppose that both $m,n <7$ and $S$ overlaps $F_j$ for some $j\geq 8$ or $S$ overlaps $G_k$ for some $k\geq8$. Then inspection} of the blocks $F_j$ and $G_k$ with $j,k\geq 8$ {shows} that $S$ must necessarily overlap {the suffix of $F_j$ of length 45, or the prefix of length 45 of $G_k$, or both, for some $j,k\geq 8$.}
 However, if $S$ overlaps the last 45 {entries of an appearance of a block $F_j$ with $j\geq 8$, then the block $ba^7b$ appears in the block $\omega_{i_1 - 45}\dots \omega_{i_1}$. Similarly, if $S$ overlaps the first 45 entries of an appearance of a block $G_k$ with $k\geq 8$, then the block $ab^7a$ appears in the block $\omega_{i_3+1}\dots \omega_{i_3 + 45}$.}
 Thus at least one of {the blocks $ba^7b$ or $ab^7a$  appears in $\omega$.
 We conclude that
 it is impossible that any $\omega\in \Sigma_{\xi}\cap\Sigma_{\xi'}$ is not in the orbit of one of {the sequences}
$a^\infty, b^\infty, a^\infty . ba^\infty,
 b^\infty . ab^{\infty}$.}
\end{proof}

\section{Remarks, Examples, Questions}

\begin{question}
Is coding orbits by the {\em first} edge also essentially faithful for every ordering of the Pascal diagram? In other words, under the hypotheses of Theorem \ref{thm:coding}, is the coding of $\gamma$ by initial segments of length $1$ not the same as the coding of $\gamma'$
 {by initial segments of length $1$}?
\end{question}

\begin{ex}\label{ex:uncountablymanymax}
It is possible to assign an ordering to the Pascal graph in such a way that there are uncountably many minimal paths. It suffices to find a binary tree embedded in the graph, for then we can define the ordering in such a way that all edges in the tree are minimal, the other edges to vertices in the tree are maximal, and the orderings at remaining vertices are assigned arbitrarily. Let the tree include the vertex $(0,0)$ at level 0 and nonintersecting paths to vertices $(3,0)$ and $(1,2)$ at level 3.

Continue inductively as follows. Suppose that $n \geq 2$ and the tree from level 0 to level $2^n-1$ contains every other vertex at level $2^n-1$ and no other vertices at that level or at higher levels. (If these vertices are labeled $v_0=(2^n-1,0), v_1=(2^n-2,1), \dots, v_{2^n-1}=(0,2^n-1)$, we are including $v_0, v_2, \dots, v_{2^n-2}$.) Extend the tree to contain $2^n$ nonintersecting paths from these vertices to every fourth vertex at level $2^{n+1}-2$. Then from each of these vertices at level $2^{n+1}-2$ extend two paths to form nonintersecting paths to every other vertex at level $2^{n+1}$.

We can simultaneously embed another dyadic tree which shares no edges with the first tree---if necessary, spread out the construction of the first tree even more. Then declaring that all edges in the second tree are maximal will produce an ordering for which there are uncountably many maximal paths as well as uncountably many minimal paths.
\end{ex}

\begin{question}
We do not know whether it is possible for an ordering to have countably many maximal paths and uncountably many minimal paths, or vice versa. (But no ordering on the Pascal graph can have only finitely many maximal or minimal paths, since from each vertex there is a unique path up to the root consisting entirely of maximal (or entirely of minimal) edges.)
\end{question}

\begin{question}We are interested in asymptotic estimates of the complexity function $p_\xi(n)$ which counts the number of $n$-blocks seen in the 1-codings of all sequences in the adic system $(X,T_\xi)$ on the Pascal graph with a fixed ordering $\xi$. It is known \cite{MelaPetersen2005} that for the ordinary Pascal system, determined by $\xi \equiv 0$, $p_0(n) \sim n^3/6$. Can some orderings produce asymptotically larger or smaller complexity functions? If orderings are chosen at random, for example by independent choices at each vertex of $0$ with some probability $q \in (0,1)$ and $1$ with probability $1-q$, what is the expected asymptotic complexity of the set of resulting codings?\end{question}

\begin{bibdiv}
\begin{biblist}
\bibselect{pascalExpansiveness}
\end{biblist}
\end{bibdiv}

\end{document}